\documentclass[12pt]{amsart}
\usepackage{amsfonts}
\usepackage{amsmath}
\usepackage{amsxtra}
\usepackage{amssymb,latexsym}
\usepackage[mathcal]{eucal}
\usepackage{amscd}

\makeindex

\newtheorem{theo}{{\bfseries Theorem}}[section]
\newtheorem{prop}[theo]{{\bfseries Proposition}}
\newtheorem{lem}[theo]{{\bfseries Lemma}}
\newtheorem{cor}[theo]{{\bfseries Corollary}}
\newtheorem{df}[theo]{{\bfseries Definition}}

\newtheorem{ex}[theo]{{\bfseries Example}}

\def \N {\mathbb N}

\def \R {\mathbb R}

\def \A {\mathcal A}

\def \CC {\mathcal C}

\def \H {\mathcal H}

\def \M {\mathcal M}

\def \G {\mathcal G}

\def \a {\alpha }
\def \b {\beta}

\def \ep {\epsilon}

\def \d {\delta}
\def \g {\gamma}
\def \r {\rho}
\def \s {\sigma}

\def \tto {\longrightarrow}

\usepackage{amssymb,latexsym}
\usepackage[mathcal]{eucal}
\usepackage{amscd}
\usepackage{amsmath}
\usepackage{graphicx}
\usepackage{amsfonts}
\usepackage{amscd}

\numberwithin{equation}{section}

\begin{document}

\title[Mimicking an Arbitrary Tournament]{\bfseries  Generalized Intransitive Dice: \\ Mimicking an Arbitrary Tournament}
\vspace{1cm}
\author{Ethan Akin}
\address{Mathematics Department \\
    The City College \\ 137 Street and Convent Avenue \\
       New York City, NY 10031, USA     }
\email{ethanakin@earthlink.net}

\date{January 2019, revised April, 2019}

\begin{abstract} A generalized $N$-sided die is a random variable $D$ on a sample space of $N$ equally likely outcomes taking values in
the set of positive integers. We say of independent $N$ sided dice $D_i, D_j$ that $D_i$ beats $D_j$, written
$D_i \to D_j$, if $Prob(D_i > D_j) > \frac{1}{2} $.
Examples are known of intransitive $6$-sided dice, i.e. $D_1 \to D_2 \to D_3$ but $D_3 \to D_1$. A tournament of size $n$ is a choice of direction
$i \to j$ for each edge of the complete graph on $n$ vertices. We show that if $R$ is tournament on the set $\{ 1, \dots, n \}$,
then for sufficiently large
$N$ there exist sets of independent $N$-sided dice $\{ D_1, \dots, D_n \}$ such that $D_i \to D_j$ if and only if $i \to j$ in $R$.
\end{abstract}

\keywords{intransitive dice, nontransitive dice, digraph, tournament, universal tournament,
mimicking a tournament, modeling a tournament, homeomorphism groups}

\thanks{{\em 2010 Mathematical Subject Classification} 05C20, 05C25, 05C62, 05C70}
\vspace{1cm}

\vspace{.5cm} \maketitle

\tableofcontents

\newpage
\section{Intransitive Dice}\label{sec1}
\vspace{.5cm}

A generalized die is a cube with each face labeled with a positive number. The possibility of repeated labels is allowed.
On the standard die each of numbers $1,2, \dots 6$ occurs once. Of two dice $A_1$ and $A_2$
we say that $A_1$ beats $A_2$ (written $A_1 \to A_2$) if, when they are rolled, the probability that
$D_1 > D_2$ is greater than $\frac{1}{2}$
where  $D_1$ and $D_2$ are the independent random variables of the values displayed by the dice $A_1$ and $A_2$, respectively.
There exist  examples of nontransitive dice, or intransitive dice,
three dice $A_1, A_2, A_3$ such that $A_1 \to A_2, A_2 \to A_3,$ and $A_3 \to A_1$. For example, if we let
\begin{align}\label{exeq1}
 \begin{split}
A_1 \ &=  \ \{  3, 5, 7 \}, \\
A_2 \ &=  \ \{ 2, 4, 9 \}, \\
A_3 \ &=  \ \{  1, 6, 8 \}.
 \end{split}
 \end{align}
and repeat each label twice to get $6$-sided dice, then $P(D_{i} > D_{i+1}) = \frac{5}{9}$ for $i=1,2,3$ (counting mod $3$).

The Wikipedia page on
\emph{Nontransitive Dice} contains a lovely exposition with a number
of different examples constructed by Efron, Grime and others.

On a sample space of $N$ equally likely outcomes, which we will call the \emph{faces}, an $N$-sided die\index{dice!$N$-sided} is a random variable
taking positive integer values.
Such a die is called \emph{proper}\index{dice!proper}\index{proper dice} when it takes values in the set $\{1, \dots, N \}$ and the sum of the values is $N(N+1)/2$, or,
equivalently, when the expected value of a roll is $(N+1)/2$. That is, the sum is the same as that of the
\emph{standard} $N$-sided die \index{standard $N$-sided die} with each value among  $\{1, \dots, N \}$ occurring once.
If we repeat the labels of (\ref{exeq1}) three times each then we obtain proper $9$-sided dice with a cyclic pattern.

These have been considered by Gowers in his blog and by Corey et al \cite{CG}.
Considering large numbers of such dice leads us to the theory of
tournaments.

A \emph{digraph}\index{digraph} on a nonempty set $I$ is a subset $R \subset I \times I$ such that
$R \cap R^{-1} = \emptyset$ with $R^{-1} = \{ (j,i) : (i,j) \in R \}$.
In particular, $R$ is disjoint from
the diagonal $\Delta =\{(i,i) : i \in I\}$.
 We write $i \to j$ for $(i,j) \in R$.
For $i \in S,$ the \emph{output set} $ R(i) = \{ j : (i,j) \in R \}$ and so $R^{-1}(i) = \{ j : (j,i) \in R \}$
is the \emph{input set}. \index{output set}\index{input set}
If $J \subset I$, then the \emph{restriction}\index{restriction}\index{digraph!restriction}
of $R$ to $J$ is $R|S_0 = R \cap (J \times J)$.

A digraph $R$ on $I$ is called a \emph{tournament}\index{tournament} when
$R \cup R^{-1} = (I \times I) \setminus \Delta$.
Thus, $R$ is a tournament on $S$ when for each pair of distinct elements $i, j \in S$
either $(i,j)$ or $(j,i)$ lies in $R$ but not both.
Harary and Moser provide a nice exposition of tournaments in \cite{HM}.

Our purpose here is to show that any tournament can be mimicked by a suitable choice of $N$ sided dice. That is,
with $[n] = \{ 1,2, \dots, n \}$ \index{$[n]$} we prove the following.

\begin{theo}\label{maintheo} If $R$ is a tournament on $[n]$, then
there is a positive integer $M$ such that for every integer $N \geq M$,
there exists a set
$D_1, \dots, D_n$ of independent, proper $N$-sided dice
such that for $i, j \in [n]$, $D_i \to D_j$ if and only if $i \to j$ in $R$.  That is, for $i, j \in [n]$,
\begin{equation}\label{maineq}
P(D_i > D_j) > \frac{1}{2} \quad \Longleftrightarrow \quad (i,j) \in R,
\end{equation}
\end{theo}
\vspace{.5cm}

Since the number of tournaments on $[n]$ is finite ($=  2^{n(n-1)/2}$), we may choose $M$ large enough that
for every $N \geq M$, every tournament on $[n]$ can be mimicked by proper $N$-sided dice.

In  Corey et al \cite{CG} the authors' numerical work led them to the much stronger conjecture that, with $n$ fixed, and
letting $N$ tend to
infinity, every tournament on $[n]$ becomes equally likely.

We will call $X$ a \emph{continuous random variable on the unit interval}\index{continuous random variable}
 when the distribution function $F$ of $X$ is  strictly increasing and continuous on the unit interval $[0,1]$
 with $F(0) = 0$ and $F(1) = 1$. Equivalently, the associated measure is nonatomic with support equal to $[0,1]$.
To prove Theorem \ref{maintheo} we replace the discrete random variables given by the dice with
continuous random variables on the unit interval.

For an independent pair of such random variables, say that $X$ beats $Y$, written
$X \to Y$,  if $P(X > Y) > \frac{1}{2}$. Notice that
for independent continuous random variables the probability that $X = Y$ is zero.

 We will call $X$, a
continuous random variable on the unit interval, \emph{proper}\index{proper continuous random variable}\index{continuous random variable!proper},
when the expected value, $E(X)$, is equal to $ \frac{1}{2}$. This is the
analogue of the proper condition for dice.

In the following sections we will prove:

\begin{theo}\label{maintheo2} If $R$ is a tournament on $[n]$, then exists  a set
$X_1, \dots, X_n$ of independent, proper, continuous random variables on the unit interval
such that for $i, j \in [n]$, $X_i \to X_j$ if and only if $i \to j$ in $R$.  That is, for $i, j \in [n]$,
\begin{equation}\label{maineq2}
P(X_i > X_j) > \frac{1}{2} \quad \Longleftrightarrow \quad (i,j) \in R,
\end{equation}
\end{theo}
\vspace{.5cm}

We complete this section by showing how Theorem \ref{maintheo} follows from Theorem \ref{maintheo2}.
That is, we obtain the discrete result from the continuous one.

\begin{lem}\label{discretelem}For a continuous $[0,1]$ valued random variable $X$ and  $\ep > 0$,
there exists a random variable $Y$
 with finitely many values, all rational
in $[0,1)$, with rational probabilities for each value and such that such that $P(|X - Y| > \ep) < \ep$.
In addition, if $E(X)$ is rational, $Y$ can be chosen so that
$E(Y) = E(X)$. \end{lem}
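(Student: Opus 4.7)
The plan is to build $Y$ in two stages: first, a natural discrete approximation of $X$ using rational values on a rational partition of $[0,1]$; then a small perturbation of the resulting weights to make them rational (and, when $E(X) \in \Q$, to hit the prescribed expectation exactly). Fix $\epsilon' > 0$ much smaller than $\epsilon$, choose rational points $0 = t_0 < t_1 < \cdots < t_m = 1$ with mesh less than $\epsilon'$, and rational $y_k \in [t_{k-1}, t_k)$. Writing $F$ for the distribution function of $X$, set $p_k := F(t_k) - F(t_{k-1})$; these masses are positive but typically irrational. The base approximation $Y_0 := y_k$ on $\{X \in [t_{k-1}, t_k)\}$ satisfies $|X - Y_0| < \epsilon'$ pointwise, so any $Y$ that disagrees with $Y_0$ on an event of probability less than $\epsilon$ automatically gives $P(|X-Y|>\epsilon) < \epsilon$.

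Without the expectation constraint, choose rational $q_k$ with $|q_k - p_k| < \epsilon'/m$ for $k < m$ and set $q_m := 1 - \sum_{k < m} q_k$, which is rational with $|q_m - p_m| < \epsilon'$. A minimum-cost reassignment on the fibers $\{X \in [t_{k-1}, t_k)\}$ produces a $Y$ of distribution $(q_k)$ on $\{y_k\}$ disagreeing with $Y_0$ only on an event of probability $\tfrac{1}{2}\sum_k|q_k - p_k| < \epsilon'$, which proves the first assertion as soon as $\epsilon' < \epsilon$.

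The main obstacle is the expectation clause: the discretization shift $\eta := E(X) - \sum_k p_k y_k$ satisfies only $|\eta| \le \epsilon'$ and must be absorbed by rational perturbations that stay close to the irrational $p_k$. The remedy is to reserve the two extreme indices: for $2 \le k \le m - 1$ approximate $p_k$ by rationals $q_k$, and let $q_1, q_m$ be the unique solutions of the $2 \times 2$ rational linear system
\[
q_1 + q_m = 1 - \sum_{k=2}^{m-1} q_k, \qquad q_1 y_1 + q_m y_m = E(X) - \sum_{k=2}^{m-1} q_k y_k.
\]
The determinant $y_m - y_1$ is at least $1 - 2\epsilon' > 0$, so $q_1$ and $q_m$ are rational; a short calculation shows that if each interior $q_k$ equals $p_k$ exactly then $q_1 - p_1 = -\eta/(y_m - y_1) = -(q_m - p_m)$, and interior perturbations contribute only linear corrections. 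Taking the interior rational approximations fine enough and then $\epsilon'$ small enough makes $q_1, q_m$ rational and strictly positive and drives the total variation between $(p_k)$ and $(q_k)$ below $\epsilon$, and the coupling of the previous paragraph then yields a $Y$ with $E(Y) = \sum_k q_k y_k = E(X)$ and $P(|X - Y| > \epsilon) < \epsilon$.
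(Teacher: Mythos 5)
Your approach is genuinely different from the paper's. The paper takes a one-sided floor approximation $Y_1 = \lfloor MX\rfloor/M$ so that $Y_1 \leq X$, then dumps a tiny amount of weight to $0$ to rationalize the probabilities while \emph{further lowering} the mean, producing $Y_2 \leq Y_1$ with $E(X) - 1/M < E(Y_2) \leq E(X)$; it then shifts $Y_2$ up by the rational constant $E(X) - E(Y_2) \in [0,1/M)$ to land the expectation exactly. The one-sidedness is what makes the final shift nonnegative and small, so the shifted variable stays in $[0,1)$. You instead use a two-sided approximation on a rational mesh and try to nail $E(X)$ head-on by solving a $2\times 2$ rational linear system for the two extreme weights $q_1, q_m$. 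That is where a genuine gap appears.

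Your system forces $q_1 - p_1 = -\eta/(y_m - y_1)$ (up to small interior corrections) with $|\eta| \leq \epsilon'$, and you assert that taking the interior approximations fine and then $\epsilon'$ small makes $q_1, q_m$ strictly positive. But refining the interior rational approximations does not shrink $\eta$, and shrinking $\epsilon'$ shrinks the budget $p_1 = F(t_1)$ as well (since $t_1 < \epsilon'$); nothing guarantees $p_1$ dominates the correction term. Concretely, if $F(x) = x^2$ then $p_1 \leq (\epsilon')^2$, while for the natural choice $y_k = t_{k-1}$ one has $\eta \approx \epsilon'/2$, so $q_1 \approx p_1 - \eta/(y_m - y_1) < 0$. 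Positivity can be rescued---for instance by choosing each $y_k$ to nearly match the conditional mean $E[X \mid t_{k-1} \leq X < t_k]$ so that $|\eta|$ is forced far below $p_1(y_m - y_1)$, or by spreading the mean-adjustment over many cells instead of the two extremes---but this is an additional idea you have not supplied, and the refinement order you state (``interior approximations fine, then $\epsilon'$ small'') does not implement it. The paper's one-sided-plus-shift device avoids the positivity issue entirely, which is exactly the advantage it buys.
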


\begin{proof} For $x \in \R$ let
$\lfloor x \rfloor$ be the largest integer less than or equal to $x$.
We may assume $\ep < 1$. Choose a positive integer $M$ with $M \ep > 2$,  and let
$Y_1 = \frac{\lfloor M \cdot X \rfloor }{M}$ for $X \not= 1$ and $Y_1 = \frac{M-1}{M}$ if $X = 1$. Hence,
$ Y_1 \leq \min(X, \frac{M-1}{M})$. In addition, $X - Y_1   \leq \frac{1}{M}$.
Furthermore, the inequality is strict unless $X = 1$ and
since $X$ is continuous, $P(X = 1) = 0$. It follows that
$$E(X) - \frac{1}{M} \ < \ E(Y_1) \ \leq  \ \min(E(X), \frac{M-1}{M}).$$

For each $k = 1,\dots, M-1$ we can move some weight
from $k/M$ to $0$
to obtain $Y_2 \leq Y_1$ with rational probabilities for each value. Technically, $Y_2 = I \cdot Y_1$ where $I$ is a
suitably chosen Bernoulli random variable.
We can make the total weight change, i.e. $P(I = 0)$, arbitrarily
small so that  $P(Y_1 - Y_2 > 0 ) < \d $ with $\d < \ep$ and $\d < E(Y_1) - E(X) + \frac{1}{M}$.
Hence, we have
$$E(X) - \frac{1}{M} \ < \ E(Y_2) \ \leq  \ \min(E(X), \frac{M-1}{M}).$$ If $E(X)$ is
irrational, let $Y = Y_2$. If $E(X)$ is rational, then let
 $Y = Y_2 + E(X) - E(Y_2)$. Since $0 \leq E(X) - E(Y_2) < \frac{1}{M}$ it follows
 that $Y$ has values in $[0,1)$.  Finally,
 $$P(|X - Y| > \frac{2}{M}) < P(Y_1 - Y_2 > 0 ) < \ep.$$

\end{proof}
\vspace{.5cm}

First, we construct a set of dice with a given tournament.

\begin{proof} We obtain a set of dice which mimics tournament $R$  by approximating the sequence $\{ X_1, \dots, X_n \}$ from
Theorem \ref{maintheo2}.

There exists a positive $\g$ such that $P(X_i - X_j > \g) > \frac{1 }{2} + \g$ for all $(i,j) \in R$.

Now with $\ep = \g /3$ use Lemma \ref{discretelem} to choose finite, $[0,1)$-valued,
rational $Y_j$ so that $P(|X_j - Y_j| > \ep) < \ep$ for $j \in [n]$
and such that $E(Y_j) = \frac{1}{2}$.

Let $N$ be a common denominator
for all of the values and probabilities. Thus, for $j \in [n]$ and $k = 0, \dots, N-1$ there exists a non-negative integer
$P_{jk}$ such that
\begin{equation}
P(Y_j = \frac{k}{N}) = \frac{P_{jk}}{N}.
\end{equation}
Since the probabilities sum to one, and the expected value is $1/2$ we have for each $j \in [n]$
\begin{equation}\label{summeq}
\begin{split}
\sum_{k=0}^{N-1} \ P_{jk} \ = \ N, \\
\sum_{k=0}^{N-1} \ k P_{jk} \ = \ \frac{N^2}{2}.
\end{split}
\end{equation}
In particular, we see that $N$ must be even.

 Define an $N$-sided die $\bar D_j$
so that on $P_{jk}$ of the faces, the value $k+1$ is displayed. The outcome $\bar Z_j$
of a roll of the die $\bar D_j$ has the distribution  of $N Y_j + 1$.
It follows that for  $(i,j) \in R$
\begin{align}\label{summeq2}
\begin{split}
P(\bar Z_i - \bar Z_j > 0) \ = \ &P(Y_i - Y_j > 0) \ = \\ P(X_i - X_j + (Y_i - X_i) + (&X_j - Y_j) > 0) \ \geq \\
P(X_i - X_j > \g) - P(|Y_i - X_i|  &> \g /3 ) - P(|X_j - Y_j|  > \g /3 )\\  >   \frac{1}{2} + \g/3. \qquad &
\end{split}
\end{align}

Thus, $\bar D_i \to \bar D_j$ if and only if $(i,j) \in R$.

The sum of the face values is:
\begin{equation}\label{summeq2a}
\sum_{k=0}^{N-1} \ (k + 1) P_{jk} \ = \ \frac{N(N+2)}{2}.
\end{equation}
So these are not quite proper dice.

In order to obtain proper dice, we need some additional work. First, we can choose the
common denominator arbitrarily large. We will require that
\begin{equation}\label{summeq3}
(\frac{1}{2} + \frac{\g}{3}) \cdot (\frac{N}{N+1})^2 > \frac{1}{2}.
\end{equation}

The die $D_j$ will be $N+1$ sided. On $P_{jk}$ of the faces, the value $k+1$ is
displayed as before. In addition there is one new face
with the value $\frac{N}{2} + 1 = \frac{N + 2}{2} $ (Recall that $N$ is even). It follows that the sum of the values is
\begin{equation}\label{summeq4}
\frac{N(N+2)}{2} + \frac{N + 2}{2} \ = \ \frac{(N+1)(N+2)}{2},
\end{equation}
and so these are proper dice.

For each $j \in [n]$ let $I_j$ be the indicator of the event that when $D_j$ is rolled, one of the old faces turns up.
Thus, $I_j$ is a Bernoulli random variable
with $P(I_j = 1) = \frac{N}{N+1} $. If $Z_j$ is the outcome of a roll of $D_j$ then
\begin{equation}\label{summeq5}
Z_j \ = \ I_j \cdot (\bar Z_j) + (1 - I_j) \cdot (\frac{N}{2} + 1).
\end{equation}
Conditioned on the assumptions that $I_i = 1$ and $I_j = 1$, $Z_i > Z_j$ if and only if $\bar Z_i > \bar Z_j$.
Hence, for  $(i,j) \in R$
\begin{equation}\label{summeq6}
\begin{split}
P( Z_i -  Z_j > 0) \ \geq \ P(\bar Z_i - \bar Z_j > 0)\cdot P(I_i = 1, I_j = 1) \ =  \\
P(\bar Z_i - \bar Z_j > 0)\cdot (\frac{N}{N+1})^2 \ > \ (\frac{1}{2} + \g/3) \cdot (\frac{N}{N+1})^2 \ > \ \frac{1}{2}.
\end{split}
\end{equation}

Thus, $\{ D_1, \dots, D_n \}$ is the required list of proper $N+1$-sided dice.

\end{proof}
\vspace{.5cm}

To complete the proof of Theorem \ref{maintheo} we show that the tournament can be mimicked by $N$ sided dice for
sufficiently large $N$.

Assume that $D$ is a proper $N$-sided die such that there are $P_k$ faces with value $k$ for $k = 1, \dots , N$. For $M$ a positive integer and
$S$ an integer with $0 \leq S < N$, we define the $MN + S$ sided extension $\hat D$ such that
\begin{align}\label{extenddieeq}
\begin{split}
\text{ For } \ \ Q = 1, \dots, M, \quad &\text{there are} \ P_k \\ \text{faces with value}& \ (Q-1)N + k  \ \text{for} \ k = 1, \dots , N, \\
\text{ For } \ \ i = 1, \dots, S, \quad &\text{there is one face with value  } \ \ MN + i.
\end{split}
\end{align}
It is clear that with $S = 0$ the face sum is $M \cdot \frac{N(N+1)}{2}  \ + \ N^2 \cdot \frac{M(M-1)}{2}$ and this equals
$\frac{MN(MN+1)}{2}$. As the additional faces are added, the die remains proper. To see this, note that if the $MN$ die had been
standard, it would have remained standard as the additional faces are added.

\begin{lem}\label{extenddielem} Assume that $R$ is a tournament on $[n]$ and that $D_1, \dots, D_n$ is a list of proper $N$-sided
dice such that for some $\ep > 0$
\begin{equation}\label{extenddieeq2}
(i,j) \in R \quad \Longrightarrow \quad P(D_i > D_j) \ > \ \frac{1}{2} + \ep.
\end{equation}
 If $M$ is large enough that $2MN \ep > 1$ then for all $S = 0, \dots, N-1$ the $MN + S$ extensions $\hat D_1, \dots, \hat D_n$ are proper
 dice satisfying
  \begin{equation}\label{extenddieeq3}
(i,j) \in R \quad \Longrightarrow \quad P(\hat D_i > \hat D_j) \ > \ \frac{1}{2}.
\end{equation}
\end{lem}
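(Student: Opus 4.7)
The plan is to exhibit $\hat D_i$ as a two-stage experiment: with probability $\tfrac{MN}{MN+S}$ the roll lands on one of the $MN$ ``base'' faces, in which case $\hat D_i$ has the form $(Q_i-1)N + D_i'$, where $Q_i$ is uniform on $\{1,\dots,M\}$ independent of $D_i'$ (and $D_i'$ is distributed like $D_i$); with the complementary probability $\tfrac{S}{MN+S}$ the roll lands on one of the extra faces, in which case $\hat D_i$ is uniform on $\{MN+1,\dots,MN+S\}$. Properness of $\hat D_i$ is then a brief check: the sum of the base values equals $\tfrac{MN(MN+1)}{2}$ (already observed in the paragraph preceding the lemma), and adding $\sum_{\ell=1}^{S}(MN+\ell)$ rearranges to $\tfrac{(MN+S)(MN+S+1)}{2}$.

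For the tournament condition, I would compute $P(\hat D_i > \hat D_j)$ for $(i,j)\in R$ by conditioning on the base/extra type of each of the two independent rolls. The key structural observation is that shifting by multiples of $N$ makes the block index dominate: on two base rolls, $\hat D_i > \hat D_j$ exactly when $Q_i > Q_j$, or $Q_i = Q_j$ and $D_i' > D_j'$. Using independence and the symmetry of $Q_i,Q_j$, this conditional probability equals $\tfrac{M-1}{2M} + \tfrac{1}{M}P(D_i>D_j)$, and by hypothesis \eqref{extenddieeq2} it exceeds $\tfrac{1}{2} + \tfrac{\ep}{M}$. When the types differ, the extra face always beats any base face (base values are $\le MN$, extra values $\ge MN+1$); when both rolls are extra, symmetry gives conditional probability $\tfrac{S-1}{2S}$, the case being vacuous when $S=0$.

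Weighting the four cases by the corresponding products of type-probabilities and summing, most of the $\tfrac{1}{2}$ contributions telescope, and a short algebraic simplification leaves an excess over $\tfrac{1}{2}$ of the form $\tfrac{\ep\,MN^2 \,-\, S/2}{(MN+S)^2}$. Since $0\le S < N$, the hypothesis $2MN\ep>1$ gives $2\ep\,MN^2 > N > S$, so this excess is strictly positive and \eqref{extenddieeq3} holds. The only real obstacle is the bookkeeping of the four cases and the algebraic step showing that the surplus $\ep/M$ gained from the base-base case absorbs the deficit $\tfrac{1}{2S}$ incurred in the extra-extra case precisely under the hypothesis of the lemma.
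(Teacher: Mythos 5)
Your proof is correct and follows essentially the same route as the paper's: both condition on which part of the extended die each roll lands in and arrive at the identical excess $\frac{\ep MN^2 - S/2}{(MN+S)^2}$, concluding from $S<N$ and $2MN\ep>1$. The only cosmetic difference is the case decomposition — the paper lumps ``different blocks'' into a single symmetry argument with three cases total, while you split base/extra into four cases and further decompose the base-base case via the uniform block index $Q_i$; the bookkeeping differs but the computation is mathematically equivalent.
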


\begin{proof} For $Q = 1, \dots, M$, think of the values between $(Q-1)N + 1$ and $QN$ as the $Q^{th}$ full block of values
and the values $MN + 1, \dots MN + S$ as
the partial block. With $(i,j) \in R$ we condition on the following cases:
\begin{itemize}
\item Assuming $\hat D_i$ and $\hat D_j$ occur in different blocks, then
$P(\hat D_i > \hat D_j) = \frac{1}{2}$, because ties cannot occur and it is equally likely that
$\hat D_i$ or $\hat D_j$ occurs in a higher block.

\item  Assuming $\hat D_i$ and $\hat D_j$ occur in the same full block, then $P(\hat D_i > \hat D_j) > \frac{1}{2} + \ep$. The probability that
they occur in the same full block is $\frac{MN^2}{(MN + S)^2}$.

\item Assuming that $\hat D_i$ and $\hat D_j$ both occur in the partial block, $P(\hat D_i = \hat D_j) = \frac{1}{S}$ and so
$P(\hat D_i > \hat D_j) = \frac{1}{2} (1 - \frac{1}{S})$. The probability that
they both occur in the partial block is $\frac{S^2}{(MN + S)^2}$.
\end{itemize}

It follows that for $(i,j) \in R$,  $P(\hat D_i > \hat D_j)$ is at least $ \frac{1}{2}$ plus the deviation
  \begin{equation}\label{extenddieeq4}
  \ep \cdot \frac{MN^2}{(MN + S)^2} - \frac{1}{2S} \cdot \frac{S^2}{(MN + S)^2}.
  \end{equation}
  Since $S < N$, this deviation is positive when $2MN \ep > 1$.

\end{proof}

{\bfseries Remark:} Observe that for any positive integer $M$, if $S = 0$, then for $(i,j) \in R$,  $P(\hat D_i > \hat D_j)$ is at least $ \frac{1}{2}$
plus the deviation $\ep/M$.
With $M = 2$ and $S = 0$ we call the $2N$ extension $\hat D$ of an $N$-sided die $D$  the \emph{double}\index{dice!double} of $D$.
\vspace{1cm}

\section{Homeomorphism Groups}
\vspace{.5cm}

Let $\H$ \index{$\H$}\index{homeomorphism group!$\H$}denote the group of orientation preserving homeomorphisms on
$[0,1]$. Thus, $F \in \H$ when it is a strictly increasing, continuous real-valued function on $[0,1]$ with
$F(0) = 0$ and $F(1) = 1$. Let  $I$ be the identity element so that $I(x) = x$ for $x \in [0,1]$.
For $F \in \H$ let $\r_F$ be the right translation map on $\H$ given by  $\r_F(G) = G \circ F$.

$\H$ is a subset of the Banach space $\CC([0,1])$ of
continuous real-valued functions on $[0,1]$ and, when equipped with the metric
induced by the sup norm $|| \cdot ||$, $\H$ is a topological group via composition.

We call $X$ a continuous random variable on $[0,1]$ exactly when its the distribution function \index{distribution function} $F_X$ is
an element of the group $\H$. Technically, the distribution function is defined on $\R$ and is zero below $0$ and
one above $1$, but we will restrict to $[0,1]$.

Let $U$ be a uniform random variable on $[0,1]$, i.e. $U \sim Unif(0,1)$, \index{$Unif(0,1)$} so that $F_U = I$.
If $F \in \H$, then the random variable $X = F^{-1}(U)$ has distribution function $F$.
That is, for $x \in [0,1]$ $P(X < x) = P( U < F(x) ) = F(x)$, see, e.g.
\cite{BH} Chapter 5 on the Universality of the Uniform.
Notice that for the expected value of such a random variable, we can integrate by parts to get
\begin{align}\label{expecteq}
\begin{split}
E(X) \ = \ &\int_0^1  x \ dF(x) \ = \\
xF(x)|_0^1 \ - \ \int_0^1 & F(x) \ dx  \ = \ 1 \ - \ \int_0^1 F(x)\ dx.
\end{split}\end{align}

Now suppose that for $i \in [n], \ X_i = F_i^{-1}(U_i)$ where the $U_i$'s are independent $Unif(0,1)$ random variables
and $F_i \in \H$.

\begin{align}\label{01}
\begin{split}
P(X_i > X_j) \ = \ &P(F_i^{-1}(U_i) > F_j^{-1}(U_j)) \\ = \ P(F_j(F_i^{-1}&(U_i)) > U_j).
\end{split}\end{align}
Conditioning on the assumption $U_i = x$, this is $$P(F_j(F_i^{-1}(x)) > U_j) = F_j(F_i^{-1}(x)).$$
Since $U_i$ is uniform, it follows that
 \begin{equation}\label{02}
P(X_i > X_j) \ = \ \int_0^1  F_j(F_i^{-1}(x)) \ dx .
\end{equation}

Each $F \in \H$ is a strictly increasing function from $[0,1]$ onto
$[0,1]$. Note that the integral $\int_0^1  F^{-1}(x) \ dx$ is the area in
the square $[0,1] \times [0,1]$ between the y-axis and the set
$$\{ (F^{-1}(y),y) : 0 \leq y \leq 1 \} \ = \ \{ (x,F(x)) : 0 \leq x \leq 1 \} $$
which is the complement in the square of the region under the graph of $F$.  Thus,
\begin{equation}\label{sumeq}
\int_0^1  F(x) + F^{-1}(x) \ dx = 1
 \end{equation}
for all $F \in \H$.

Define a partition of $\H$ by
\begin{equation}\label{setdefeq}
\begin{split}
\H_+ \ = \ \{ F \in \H : \int_0^1  F(x)  \ dx > \frac{1}{2} \} \\
\H_- \ = \ \{ F \in \H : \int_0^1  F(x)  \ dx < \frac{1}{2} \} \\
\H_0 \ = \ \{ F \in \H : \int_0^1  F(x)  \ dx = \frac{1}{2} \} \\
 \end{split}
 \end{equation}
 \index{$\H_+$} \index{$\H_-$} \index{$\H_0$}

 The sets $\H_+$ and $\H_-$ are open and with  union dense in $\H$, because if
 $F \in \H_0$ and $G \in \H$ with $G \geq F$ and $G \not= F$, then $G \in \H_+$.
 Similarly $F$ can be perturbed to an element of $\H_-$.
From (\ref{sumeq})
 we see that $F \in \H_+$ if and only if $F^{-1} \in \H_-$.

 If $X = F^{-1}(U)$ with $U \sim Unif(0,1)$ and $F \in \H$, it follows from (\ref{expecteq}) that
 \begin{equation}\label{expecteq2}
 E(X) \ = \ \frac{1}{2} \qquad \Longleftrightarrow \qquad F \in \H_0.
 \end{equation}

  Define on $\H$ the digraph $\Gamma_{\H}$ \index{$\Gamma_{\H}$}\index{digraph!$\Gamma_{\H}$} by
 \begin{equation}\label{gamedefeq}
 (F,G) \in \Gamma_{\H} \ \Longleftrightarrow \ F \to G \ \Longleftrightarrow \ G \circ F^{-1} \in \H_+.
 \end{equation}
 Thus, $I \to G$ if and only if $G \in \H_+$ and $G\to I$ if and only if $G \in \H_-$. The digraph is invariant with respect to right
 translation. Consequently, for every $F \in \H$, the union
 \begin{equation}
 \Gamma_{\H}(F)  \cup \Gamma_{\H}^{-1}(F) = \r_F(\H_+) \cup \r_F(\H_-)
  \end{equation}
  is open and dense in $\H$.

From (\ref{02}) we see that $(F_i,F_j)  \in \Gamma_{\H}$ if and only if $X_i \to X_j$ when
$X_i$ and $X_j$ are independent random variables with distribution functions $F_i$ and $F_j$, respectively.
Thus, Theorem \ref{maintheo2} is equivalent to the statement that every finite tournament can be embedded in the restriction to $\H_0$
of the $\Gamma_{\H}$ digraph on $\H$.

To prove this, it is convenient to  shift the interval from $[0,1]$ to $[-1,1]$.  Let
$\G$ \index{$\G$}\index{homeomorphism group!$\G$} denote the group of
orientation preserving homeomorphisms
on $[-1,1]$ and let $i$ be the identity element so that $i(t) = t$ for $t \in [-1,1]$.
Let $\CC([-1,1])$ denote the separable Banach space of continuous,
real-valued functions on $[-1,1]$ so that the subset $\G$ is a topological group. On $\G$ the distance
$d(f,g) = \max( || f - g||, ||f^{-1} - g^{-1}|| )$ defines a complete metric, topologically equivalent to
one induced by the sup norm.

 Define $q: [-1,1] \tto [0,1]$ by $x = q(t) = \frac{t + 1}{2}$ so that $ t = 2x - 1$.
 The conjugation map $A_q$ given by $f = A_q(F) = q^{-1}\circ F \circ q$ is a topological group isomorphism from
 $\H$ to $\G$, with $F(x) = \frac{f(2x - 1) +1}{2}$.

It follows that
 \begin{equation}\label{inteq}
 \int_0^1  F(x) \ dx \ = \ \frac{1}{2} \  + \ \frac{1}{4} \int_{-1}^{1}  f(t) \ dt.
 \end{equation}
 In particular, from (\ref{sumeq}) we obtain
 \begin{equation}\label{sumeq2}
\int_{-1}^1 f(t) + f^{-1}(t) \ dt = 0.
 \end{equation}

Define the partition of $\G$ by
\begin{equation}\label{setdefeq2}
\begin{split}
\G_+ \ = \ \{ f \in \G : \int_{-1}^1 f(t)  \ dt > 0 \}, \\
\G_- \ = \ \{ f \in \G : \int_{-1}^1 f(t)  \ dt < 0 \}, \\
\G_0 \ = \ \{ f \in \G : \int_{-1}^1 f(t)  \ dt = 0 \},  \\
 \end{split}
 \end{equation}
\index{$\G_+$} \index{$\G_-$} \index{$\G_0$}
We see that $A_q$ maps $\H_+, \H_-$ and $\H_0$ to $\G_+, \G_-$ and $\G_0$, respectively.
 Since it is a group isomorphism, $A_q$ provides an
 isomorphism between the digraphs $\Gamma_{\H}$ and  $\Gamma_{\G}$ with
 $\Gamma_{\G} = \{ (f,g) \in \G \times \G : g \circ f^{-1} \in \G_+ \}$. That is,
  \begin{equation}\label{gamedefeq2}
 (f,g) \in \Gamma_{\G} \ \Longleftrightarrow \ f \to g \ \Longleftrightarrow \ \int_{-1}^1 g(f^{-1}(t)) \ dt  >  0.
 \end{equation}
 \index{$\Gamma_{\G}$}\index{digraph!$\Gamma_{\G}$}

 Thus, Theorem \ref{maintheo2} is equivalent to the following result
 which we will prove in the next section.

 \begin{theo}\label{maintheo3} Every finite tournament can be embedded in the restriction of the $\Gamma_{\G}$ digraph to $\G_0$. \end{theo}
 \vspace{.5cm}

 We let $\bar \G$  \index{$\bar \G $} denote the set of continuous, non-decreasing maps from $[-1,1]$ onto itself.
 So $f \in \bar \G$ when $f \in \CC([-1,1])$,
 $f(\pm 1) = \pm 1$, and for all $t_1, t_2 \in [-1,1], \ t_1 < t_2$ implies $f(t_1) \leq f(t_2)$. Since composition is  jointly continuous,
 $\bar \G$ is a topological semigroup with $\G$ the group of invertible elements. Let  \index{$\bar \G_{0} $}
 $$ \bar \G_0 \ = \ \{ f \in \bar \G : \int_{-1}^1 f(t)  \ dt = 0 \}.$$

 For $f \in \bar \G$ we let $\r_f$ \index{$\r_f$}  denote the right translation map \index{translation map!right} on
 $\bar \G$ so that $\r_f(g) = g \circ f$.
 If $f \in \G$ then $\r_f$ is a homeomorphism with inverse $\r_{f^{-1}}$ and it maps $\G$ to itself. As before, the digraph
 $\Gamma_{\G}$ is $\r_f$ invariant for all $f \in \G$.

 Define $z : \R \tto \R$ by $z(t) = -t$.  Of course, $z([-1,1]) = [-1,1]$. The conjugation map $A_z$ given by $A_z(f) = z\circ f \circ z$
 is a linear isometry on $\CC([-1,1])$, which restricts to a
  a topological semigroup isomorphism from
 $\bar \G$ to itself which preserves $\G$. For $f \in \CC([-1,1])$ we write $f^*$ \index{$f^*$}
 for $A_z(f)$ so that $f^*(t) = - f(-t)$. Using the substitution
 $s = -t$ we see that
 \begin{equation}\label{inteq1}
 \int_{-1}^1 f^*(t) \ dt \ = \ -  \int_{-1}^{1} f(s) \ ds.
 \end{equation}
 Clearly, $f^{**} = f$.

 Since $A_z$ is a topological semigroup isomorphism and is a group isomorphism on $\G$ it is clear that
  \begin{align}\label{inteq1a}
  \begin{split}
  (f \circ g)^* \ = \ f^* \circ g^* & \qquad \text{for} \ f,g \in \bar \G,\\
  (f^*)^{-1} \ = \ (f^{-1})^* & \qquad \text{for} \ f \in \G.
\end{split}
\end{align}

Let $ \CC_{00} = \{ f \in  \CC([-1,1]): f = f^* \} $  and let $\bar \G_{00}$ and $ \G_{00}$ equal $\bar \G \cap \CC_{00}$ and
$\G \cap \CC_{00}$. \index{$\CC_{00}$}\index{$\bar \G_{00}$} \index{$\G_{00}$}
Thus, $\CC_{00}$,
$\bar \G_{00}$ and $\G_{00}$ consist of the odd functions in $\CC([-1,1])$, $\bar \G$ and $\G$, respectively.
From (\ref{inteq1a}) it is clear that
$\bar \G_{00}$ is a closed subsemigroup of $\bar \G$
that $\G_{00}$ is a subgroup of $\G$.

Clearly,
$$ \bar \G_{00} \ \subset \ \bar \G_0 \quad \text{and} \quad \G_{00} \ \subset \ \G_0.$$

We collect some elementary results.

\begin{prop}\label{elemgrpprop} \begin{enumerate}
\item[(i)] $\bar \G, \bar \G_0, \CC_{00}, \bar \G_{00}, \G, \G_0$ and $ \G_{00}$
are convex subsets of $\CC([-1,1])$. Moreover,
$(x_1f_1 + x_2f_2)^* = x_1 f_1^* + x_2 f_2^*$ for all $f_1, f_2 \in \CC([-1,1])$ and $x_1, x_2 \in \R$.

\item[(ii)] $\bar \G, \bar \G_0 $ and $ \bar \G_{00}$ are the closures in $\CC([-1,1])$
of $ \G, \G_0$ and $ \G_{00}$, respectively.
Moreover, $\G$ is a dense $G_{\delta}$ subset of $\bar \G$,  and $\bar \G \setminus  \bar \G_0 $ is
a dense, relatively open subset of $\bar \G$.

\item[(iii)] Each of the sets $\bar \G, \bar \G_0, \bar \G_{00}, \G, \G_0$ and $ \G_{00}$ is
preserved by $A_z$.  Furthermore, the map
$f \mapsto \frac{1}{2}(f + f^*)$ is a retraction of $\CC([-1,1])$ onto $\CC_{00}$,
taking $\bar \G$ onto $\bar \G_{00}$, and $\G$ onto $\G_{00}$.

\item[(iv)] If $f_1, \dots, f_n \in \G$, then $f_M, f_m \in \G$ with
$$f_M(t) \ = \ \max_i \ f_i(t) \quad \text{and} \quad  f_m(t) \ = \ \min_i \ f_i(t).$$
\end{enumerate}
\end{prop}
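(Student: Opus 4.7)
The plan is to dispatch the four items in sequence; each reduces to an elementary observation about convex combinations, continuity of the integral, or the behavior of $A_z$. For (i), I would first verify $(x_1 f_1 + x_2 f_2)^*(t) = x_1 f_1^*(t) + x_2 f_2^*(t)$ directly from $f^*(t) = -f(-t)$. Convexity of $\bar\G$ is then immediate: a convex combination of continuous, non-decreasing functions fixing $\pm 1$ retains all three properties. For $\G$, the same check works once one observes that a convex combination of strictly increasing functions is again strictly increasing. Linearity of the integral handles $\bar\G_0$ and $\G_0$; linearity of $*$ handles $\CC_{00}$; and the remaining sets are intersections of the above.

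For (ii), the key idea is a single perturbation $f_\ep := (1-\ep) f + \ep\, i$ that does triple duty. For any $f \in \bar\G$ this lies in $\G$, and $f_\ep \to f$ uniformly as $\ep \to 0^+$. Since $\int_{-1}^1 i\, dt = 0$ and $i^* = i$, the same formula preserves membership in $\bar\G_0$ and in $\bar\G_{00}$, yielding the corresponding density statements. To see $\G$ is $G_\d$ in $\bar\G$, I would write
\begin{equation*}
\G \ = \ \bigcap_{\substack{q_1, q_2 \in \Q \cap [-1,1] \\ q_1 < q_2}} \{ f \in \bar\G : f(q_2) > f(q_1) \},
\end{equation*}
each set being relatively open by continuity of evaluation, while continuity of $f$ lifts strict monotonicity on a dense subset to strict monotonicity everywhere. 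Openness of $\bar\G \setminus \bar\G_0$ follows from continuity of $f \mapsto \int_{-1}^1 f\, dt$, and its density follows by perturbing $f \in \bar\G_0$ by any small strictly increasing function that shifts the integral away from $0$.

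For (iii), preservation of $\bar\G$ under $A_z$ is the direct check $f^*(\pm 1) = -f(\mp 1) = \pm 1$ together with the observation that $t \mapsto -f(-t)$ is non-decreasing when $f$ is; the identities (\ref{inteq1}) and (\ref{inteq1a}) then take care of $\bar\G_0$ and $\G$, while $\bar\G_{00}$ and $\G_{00}$ are preserved tautologically as fixed sets of $A_z$. For the retraction, $f^{**} = f$ shows $\tfrac{1}{2}(f + f^*) \in \CC_{00}$ and that the map fixes $\CC_{00}$ pointwise; that it sends $\bar\G$ into $\bar\G_{00}$ (and $\G$ into $\G_{00}$) combines preservation under $A_z$ with the convexity from (i). Part (iv) is shortest: $f_M$ is continuous as a maximum of finitely many continuous functions, $f_M(\pm 1) = \pm 1$ is immediate, and for $t_1 < t_2$, picking $j$ with $f_M(t_1) = f_j(t_1)$ gives $f_M(t_2) \geq f_j(t_2) > f_j(t_1) = f_M(t_1)$; the argument for $f_m$ is symmetric. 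The only mild obstacle is organizing part (ii) so that a single perturbation handles density in $\bar\G$, $\bar\G_0$, and $\bar\G_{00}$ simultaneously, which is precisely why I would single out that $i$ lies in $\G \cap \G_0 \cap \G_{00}$.
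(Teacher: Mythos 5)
Your proof is correct and follows essentially the same route as the paper: convexity, the convex-combination density argument, the rational-pair $G_\d$ characterization of $\G$, the odd-part retraction, and the max/min strict-monotonicity check are all the paper's arguments, and your observation that $i$ lies in $\G \cap \G_0 \cap \G_{00}$ and so serves as a single perturbing element for all three closure/density claims is a clean organizational unification of what the paper does with a generic $f_2 \in \G$. One phrase to tighten: for density of $\bar\G \setminus \bar\G_0$ you cannot literally add a small increasing function to $f$ without breaking the boundary conditions, so you should state explicitly that you take a convex combination $(1-x)f + xg$ with $g \in \bar\G \setminus \bar\G_0$, which is exactly the paper's argument.
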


\begin{proof} (i): Convexity of the various subsets  is obvious.

(ii): The sets $\bar \G, \bar \G_0 $ and $ \bar \G_{00}$ are clearly closed in $\CC([-1,1])$.

Furthermore, if $f_1 \in \bar \G$ and $f_2 \in \G$, then $(1-x)f_1 + xf_2 \in \G$ for all $x \in (0,1]$ and so $\G$ is dense in $\bar \G$.
Similarly, if $f_1 \in \bar \G_0$ and  $f_2 \in \bar \G \setminus \bar \G_0$,  then $(1-x)f_1 + xf_2 \in \bar \G \setminus \bar \G_0$ for all $x \in (0,1]$.
Since $\bar \G \setminus \bar \G_0$ is clearly nonempty, it is dense in $\bar \G$.

For a fixed $t_1 < t_2$ the condition $f(t_1) < f(t_2)$ is an open condition on $f$.
Intersecting on all such pairs with $t_1$ and $t_2$ rational, we obtain $\G$ as a $G_{\delta}$ subset of $\bar \G$.

(iii): $\frac{f(t) + f^*(t)}{2} = \frac{f(t) - f(-t)}{2}$ and so $\frac{1}{2}(f + f^*)$ is just the odd part of $f$.

(iv): If $t_1 < t_2$ and $f_M(t_1) = f_i(t_1)$, then $f_M(t_2) \geq f_i(t_2) > f_i(t_1)$ because $f_i \in \G$. Similarly, $f_m \in \G$.

\end{proof}

{\bfseries Remark:} The set $\G$ is not open in $\bar \G$.  In fact, it is easy to check that $\bar \G \setminus \G$ is dense in
$\bar \G$.

\vspace{.5cm}

Let $\tilde q : [-1,1] \tto [-1,0]$ by $\tilde q(t) = \frac{t - 1}{2}$.

We define the map $\odot : \bar \G \times \bar \G \to \bar \G$ so that the restriction $f_1 \odot f_2|[-1,0]$ equals
$\tilde q \circ f_1 \circ (\tilde q)^{-1}$ and $f_1 \odot f_2|[0,1]$ equals $q \circ f_2\circ (q)^{-1}$. That is,
\begin{equation}\label{compeq}
f_1 \odot f_2 (t) \ = \ \begin{cases} \frac{f_1(2t +1) - 1}{2} \quad \text{for} \ t \in [-1,0], \\
  \frac{f_2(2t -1) + 1}{2} \quad \text{for} \ t \in [0,1]. \end{cases}
  \end{equation}
\index{$f_1 \odot f_2$}
  By using the substitutions, $s = 2t + 1$ on $[-1,0]$ and $= 2t - 1$ on $[0,1]$ we obtain
  \begin{equation}\label{inteq2}
  \int_{-1}^1 f_1 \odot f_2 (t) \ dt \ = \ \frac{1}{4}[\int_{-1}^{1} f_1(s) \ ds \ + \ \int_{-1}^{1}  f_2(s) \ ds].
  \end{equation}

\begin{prop}\label{elemgroupprop2} Let $f_1, f_2, g_1, g_2 \in \bar \G$. \begin{enumerate}

\item[(i)] $(f_1 \odot f_2) \circ (g_1 \odot g_2) \ = \ (f_1 \circ g_1) \odot (f_2 \circ g_2)$. Moreover, if $f_1, f_2 \in \G$,
 then $f_1 \odot f_2 \in \G$ with $(f_1 \odot f_2)^{-1} \ = \ (f_1)^{-1} \odot (f_2)^{-1}$.

  \item[(ii)] For $x \in [0,1]$,

 \begin{equation}\label{coneq}
  \begin{split}
  (xf_1 + (1-x)g_1)\odot f_2 \ = \ x(f_1 \odot f_2) + (1-x) (g_1 \odot f_2), \\
  f_1\odot (xf_2 + (1-x)g_2)\ = \ x(f_1 \odot f_2) + (1-x) (f_1 \odot g_2).
  \end{split}
  \end{equation}

 \item[(iii)] $(f_1 \odot f_2)^* \ = \ f_2^* \odot f_1^*$.
 In particular, for $f \in \bar \G$, $f \odot f^* \in \bar \G_{00}$.

 \item[(iv)] For $f \in \bar \G$, there exist $f_1, f_2 \in \bar \G$ such that $f = f_1 \odot f_2$ if and
 only if $f(0) = 0$.

 \item[(v)] For $f \in \bar \G$, $f \in \bar \G_{00}$ if and only if there exists $f_1 \in \bar \G$ such that
 $f = f_1 \odot f_1^*$.
  \end{enumerate}
  \end{prop}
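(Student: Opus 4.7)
The plan is to use the explicit piecewise formula (\ref{compeq}) throughout and do the computations branch by branch on $[-1,0]$ and $[0,1]$. For (i), fix $t \in [-1,0]$. Since $g_1 \in \bar\G$ sends $[-1,1]$ to $[-1,1]$, the formula gives $(g_1 \odot g_2)(t) = \frac{g_1(2t+1)-1}{2} \in [-1,0]$, so applying $(f_1 \odot f_2)$ again uses the $f_1$-branch and produces $\frac{f_1(g_1(2t+1))-1}{2}$, which matches $(f_1 \circ g_1)\odot(f_2\circ g_2)(t)$ directly; the case $t \in [0,1]$ is symmetric. The invertibility statement then follows by noting that $i \odot i = i$ (plug into the formula) and applying the composition identity to $(f_1 \odot f_2) \circ (f_1^{-1}\odot f_2^{-1})$. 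Part (ii) is immediate because the piecewise formula is linear in $f_1$ (on $[-1,0]$) and in $f_2$ (on $[0,1]$) separately.

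For (iii), compute $(f_1 \odot f_2)^*(t) = -(f_1 \odot f_2)(-t)$. If $t \in [-1,0]$, then $-t \in [0,1]$ and the $f_2$-branch gives $-\frac{f_2(-2t-1)+1}{2}$. The right-hand side $f_2^* \odot f_1^*(t)$, being evaluated on $[-1,0]$, uses the $f_2^*$-branch: $\frac{f_2^*(2t+1)-1}{2} = \frac{-f_2(-2t-1)-1}{2}$, which agrees. The case $t \in [0,1]$ is analogous. Specialising $f_1 = f$, $f_2 = f^*$ and using $f^{**} = f$ yields $(f \odot f^*)^* = f \odot f^*$, so $f \odot f^*$ is odd; combined with membership in $\bar\G$ from (i) (or directly from the definition of $\odot$), this lies in $\bar\G_{00}$.

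For (iv), evaluating the piecewise formula at $t = 0$ gives $\frac{f_1(1)-1}{2} = 0$ from the left and $\frac{f_2(-1)+1}{2} = 0$ from the right, so $f(0) = 0$ is necessary. Conversely, if $f \in \bar\G$ with $f(0) = 0$, monotonicity forces $f$ to send $[-1,0]$ onto $[-1,0]$ and $[0,1]$ onto $[0,1]$, so define
\begin{equation*}
f_1(s) \ = \ 2 f\!\left(\tfrac{s-1}{2}\right) + 1, \qquad f_2(s) \ = \ 2 f\!\left(\tfrac{s+1}{2}\right) - 1,
\end{equation*}
both in $\bar\G$, and verify $f_1 \odot f_2 = f$ by substitution. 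I also record (it will be needed in (v)) that this decomposition is unique: if $f_1 \odot f_2 = g_1 \odot g_2$, reading the $[-1,0]$-branch forces $f_1 = g_1$ and the $[0,1]$-branch forces $f_2 = g_2$.

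For (v), one direction is already done in (iii). For the converse, suppose $f \in \bar\G_{00}$. Then $f$ is odd, so $f(0) = 0$, and (iv) produces $f_1, f_2 \in \bar\G$ with $f = f_1 \odot f_2$. Since $f = f^*$, (iii) gives $f_1 \odot f_2 = f_2^* \odot f_1^*$, so by uniqueness of the decomposition $f_2 = f_1^*$. The only real subtlety in the whole proposition is this last appeal to uniqueness of the $\odot$-factorisation, which is where (iv) is doing work beyond the existence statement, so I would make sure to record it explicitly as part of (iv)'s proof.
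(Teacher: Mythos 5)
Your proof is correct and follows essentially the same route as the paper, which also reads off (i)--(iii) from the defining piecewise formula and inverts $q$, $\tilde q$ for (iv); the paper simply phrases (i) and (ii) more compactly by noting that $A_q$ and its $\tilde q$-analogue are (affine) homomorphisms. Your explicit observation that the $\odot$-factorisation is unique is a worthwhile addition -- the paper silently uses it in the step ``$f_1 \odot f_2 = f_2^* \odot f_1^*$ and so $f_2 = f_1^*$'' in the proof of (v).
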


   \begin{proof}  (i):  The map $A_q$ and the analogue for $\tilde q$ are homomorphisms.

   (ii): The maps $q$ and $\tilde q$ are affine.

   (iii) For $t \in [-1,0], -t \in [0,1]$ and so
   $$- (f_1 \odot f_2)(-t) = - \frac{f_2(-2t - 1) + 1}{2} = \frac{f_2^*(2t + 1) - 1}{2} = (f_2^* \odot f_1^*)(t).$$
   Similarly, for $t \in [0,1]$.

   (iv): If $f(0) = 0$, then
    \begin{align}\label{coneq2}
  \begin{split}
   f_1 \ = \ &\tilde q^{-1} \circ (f|[-1,0]) \circ \tilde q, \\
    f_2 \ = \  &q^{-1} \circ (f|[0,1]) \circ  q.
    \end{split}
  \end{align}

  (v): If $f \in \G_{00}$, then, because $f$ is odd, $f(0) = 0$.  By (iv), $f = f_1 \odot f_2$ for some $f_1, f_2 \in \bar \G$.  Since $f^* = f$,
  (iii) implies $f_1 \odot f_2 = f_2^* \odot f_1^*$ and so $f_2 = f_1^*$. The converse is in (iii).

  \end{proof}
  \vspace{.5cm}

  \begin{cor}\label{elemgroupcor3} The relatively open set $\G_0 \setminus \G_{00}$ is dense in $\G_{0}$. \end{cor}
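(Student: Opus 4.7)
The plan is to approximate each $f \in \G_{00}$ by elements of $\G_0 \setminus \G_{00}$, which combined with the fact that $\G_{00}$ is closed in $\G_0$ (cut out by the equation $f = f^*$) yields both density and relative openness. Mirroring the convex-combination trick in Proposition \ref{elemgrpprop}(ii), I will first exhibit a single element $g_0 \in \G_0 \setminus \G_{00}$ and then approximate $f$ by $(1-x)f + x g_0$ as $x \to 0^+$.

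To build $g_0$, perturb the identity $i$ by a small, even, zero-mean bump. Take $\phi(t) = (1-t^2)(\tfrac{1}{5} - t^2)$; this is even, vanishes at $\pm 1$, satisfies $\phi(0) = 1/5 \neq 0$, and a direct computation gives $\int_{-1}^1 \phi\,dt = 0$. Since $||\phi'||$ is finite, for sufficiently small $\ep > 0$ the function $g_0 := i + \ep \phi$ is strictly increasing on $[-1,1]$, fixes $\pm 1$, and has zero integral, so $g_0 \in \G_0$. Because $g_0 - g_0^* = 2\ep \phi \not\equiv 0$, also $g_0 \notin \G_{00}$.

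For $f \in \G_{00}$ and $x \in (0,1]$, set $h_x := (1-x)f + x g_0$. By Proposition \ref{elemgrpprop}(i) the set $\G_0$ is convex, so $h_x \in \G_0$; and as a convex combination of strictly increasing continuous functions fixing $\pm 1$, also $h_x \in \G$. Using $f = f^*$ and $g_0 \neq g_0^*$, we compute $h_x - h_x^* = x(g_0 - g_0^*) = 2x\ep\phi \neq 0$, so $h_x \notin \G_{00}$. Finally $||h_x - f|| = x \cdot ||g_0 - f|| \to 0$ as $x \to 0^+$, and since on $\G$ the metric $d$ is topologically equivalent to the sup norm, $h_x \to f$ in $\G_0$. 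The only substantive step is producing the witness $g_0$; the convex-combination portion is a direct reuse of the technique from Proposition \ref{elemgrpprop}(ii).
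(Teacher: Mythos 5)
Your proof is correct, and the core density argument is the same as the paper's: exhibit a single witness $g_0 \in \G_0 \setminus \G_{00}$ and then shrink it toward an arbitrary $f \in \G_{00}$ via the convex combination $(1-x)f + x g_0$, which stays in $\G_0 \setminus \G_{00}$ for $x > 0$ because the even part is linear in $x$. The only real difference is how the witness is built: the paper takes $g_1 \odot g_2$ for two distinct $g_1, g_2 \in \G_{00}$ and invokes Proposition \ref{elemgroupprop2}(iii),(v) to see this lies in $\G_0 \setminus \G_{00}$, leveraging the $\odot$ machinery already on hand, whereas you construct $g_0 = i + \ep\phi$ from an explicit even, zero-mean, boundary-vanishing polynomial bump $\phi(t) = (1-t^2)(\tfrac{1}{5}-t^2)$. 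Your witness is more elementary and self-contained (it needs only the definitions and a bound on $\|\phi'\|$ to keep $g_0$ increasing), while the paper's reuses structure it has already proved and avoids an explicit computation; both are fine. You also correctly note, via $\G_{00}$ being closed in $\G_0$ (it is cut out by the closed condition $f = f^*$), why the complement is relatively open, which the paper leaves implicit.
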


  \begin{proof}  If $g_1, g_2 \in \G_{00}$ are distinct, then $g_1 \odot g_2 \in \G_0 \setminus \G_{00}$. In general, if
$g_1 \in \G_0 \setminus \G_{00}$ and $g_2 \in \G_{00}$, then for $x > 0$, $x g_1 + (1-x)g_2 \in \G_0 \setminus \G_{00}$
and these approach $g_2$ as $x$ tends to $ 0$.

\end{proof}
\vspace{.5cm}

  Define
  \begin{align}\label{coneq3}
  \begin{split}
 Q(f,g) \ = \ \int_{-1}^1 f(g^{-1}&(t)) \ dt \qquad \text{for} \ f, g \in \G, \\
 \text{So that} \quad g \to f \quad & \Longleftrightarrow \quad Q(f,g) > 0.
 \end{split}
 \end{align}
 \index{$Q(f,g)$}

 From (\ref{sumeq2}) and (\ref{inteq1})
 we see that for $f, g \in \G$.
 \begin{equation}\label{coneq4}
 Q(g,f) \ = \ - Q(f,g) \ = \ Q(f^*,g^*).
 \end{equation}

 So $Q(f,g) = 0$ if $f = g$ or if $f, g \in \G_{00}$.

 Recall that $\r_h$ is right translation by $h \in \bar \G$: $\r_h(f) = f \circ h$.
  \begin{equation}\label{coneq4a}
Q(\r_h(f),\r_h(g)) \ = \ Q(f,g) \qquad \text{for} \ f, g, h \in \G.
 \end{equation}

 From Proposition \ref{elemgroupprop2} and (\ref{inteq2}) we see that for $f_1, f_2, g_1, g_2 \in \G$
  \begin{equation}\label{coneq5}
  Q(f_1 \odot f_2,g_1 \odot g_2) \ = \ \frac{1}{4} [ Q(f_1,g_1) + Q(f_2,g_2)].
  \end{equation}

  Finally, $Q$ is affine in each variable separately.  That is for \\ $f_1, f_2, f, g_1, g_2, g \in \G$ and $x \in [0,1]$
  \begin{align}\label{coneq6}
  \begin{split}
  Q(xf_1 + (1-x)f_2,g) \ &= \ xQ(f_1,g) + (1-x)Q(f_2,g), \\
  Q(f,xg_1 + (1-x)g_2) \ &= \ xQ(f,g_1) + (1-x)Q(f,g_2).
   \end{split}
 \end{align}
 This is obvious for the first variable and so, from (\ref{coneq4}), it follows for the second.

 \vspace{1cm}

 \section{Constructions and Lemmas}
\vspace{.5cm}

For $f \in \CC([-1,1])$ we define $f^e = \frac{1}{2}(f - f^*)$ \index{$f^e$}so that
 $f^e(t) = \frac{f(t) + f(-t)}{2}$. Thus, $f^e$ is the even part of $f$.
If $f \in \bar \G$, then $f^e(\pm 1) = 0$. In general, since $f^{e}(-1) = f^e(1)$,  $f^e$ is never in $\bar \G$.

Since $f^e$ is even, (\ref{inteq1}) implies that
\begin{equation}\label{inteq3}
2 \int_{0}^{1}  f^e(t) \ dt \ = \  \int_{-1}^1 f^e(t) \ dt \ = \ \int_{-1}^1 f(t) \ dt.
\end{equation}

In particular, for $f \in \bar \G$,
\begin{equation}\label{inteq4}
\int_{0}^{1}  f^e(t) \ dt \ = \ 0 \qquad \Longleftrightarrow \qquad f \in \bar \G_{0}.
\end{equation}

Clearly
\begin{equation}\label{inteq5}
f^e \ = \ 0 \qquad \Longleftrightarrow \qquad f \in  \CC_{00}.
\end{equation}

For any $x_1, x_2 \in \R$
\begin{equation}\label{inteq6}
(x_1 f_1 + x_2 f_2)^e \ = \ x_1 f_1^e + x_2 f_2^e.
\end{equation}

We will need the following step-functions.

\begin{df}\label{stepdef01} With $m$ a positive integer,  an $m$-sequence pair \index{$m$-sequence pair} \index{sequence pair}on $[-1,1]$
$$[-1 = x_0 < x_1 <  \dots < x_m = 1;-1 = y_0 < y_1 < \dots < y_{m+1} = 1]$$
has \emph{associated step function}\index{associated step function} $h : [-1,1] \tto [-1,1]$  defined by
\begin{equation}\label{stepeq01}
h(t) \ = \ \begin{cases} y_i \quad \text{for} \ x_{i-1} < t < x_i,  \ i = 1, \dots, m. \\
y_0 \quad \text{for} \ t = x_0, \\
\frac{1}{2}(y_i + y_{i+1}) \quad \text{for} \ t = x_i,  \ i = 1, \dots, m-1.\\
y_{m+1} \quad \text{for} \ t = x_m,
 \end{cases}
\end{equation}

For the $m$-sequence pair on $[0,1]$
 $$[0 < x_1 <  \dots < x_m = 1; 0 < y_1 < \dots < y_{m} < 1]$$ the
\emph{associated odd step function}\index{associated odd step function}  $h : [-1,1] \tto [-1,1]$ is the step function
associated to the $2m$-sequence pair
\begin{displaymath}
\begin{split}
[-1 = - x_m <   \dots < -x_1 < 0 < x_1 < \dots < x_m = 1; \\
-1 < -y_{m} < \dots < -y_{1} < y_1 < \dots y_m < 1].
\end{split}
\end{displaymath}
\end{df}
 \vspace{.5cm}

For a sequence pair $[x_0,\dots,x_m; y_0,\dots,y_{m+1}]$ on $[-1,1]$ we define
\begin{align}\label{stepeq02}
\begin{split}
\ell_i \ = \ x_i - x_{i-1} \ &\text{for} \ 1 = 1, \dots, m \\ \text{so that} \quad x_i = x_0 + &\sum_{k=1}^i \ell_i \ \text{for} \ 1 = 1, \dots, m.
\end{split}
\end{align}
Clearly,
\begin{equation}\label{stepeq03}
\int_{-1}^1 h(t) \ dt \ = \ \sum_{i=1}^m \ \ell_i y_i.
\end{equation}

\begin{lem}\label{steplem02} For an $m$-sequence pair $[x_0,\dots,x_m;y_0,\dots,y_{m+1}]$ on $[-1,1]$, assume that $\ep > 0$
satisfies $2 \ep, 2 \bar \ep < min_i \ \ell_i$ where $\bar \ep = \ep \frac{(1 + y_1)}{(1- y_m)}$. There is a piecewise linear
 $h_{\ep} \in \bar \G$ with $\int_{-1}^1 h_{\ep}(t) \ dt = \int_{-1}^1 h(t) \ dt $ and as $\ep$ tends to $0$ (written $\ep \leadsto 0$),
 $ h_{\ep} $ converges
 pointwise to $h$. So if  $g: [-1,1] \tto \R$  is continuous, then
\begin{equation}\label{stepeq04}
\int_{-1}^1 g(h_{\ep}(t)) \ dt \ \leadsto \ \int_{-1}^1 g(h(t)) \ dt \ = \ \sum_{i=1}^m \ \ell_i g(y_i).
\end{equation}

 If $h$ is the associated odd step function to the sequence pair $$[0, x_1,\dots,x_m = 1;0,y_1\dots,y_m, y_{m+1} = 1 ]$$ on $[0,1]$, then
 each $h_{\ep}$ is an odd function and so lies in $\bar \G_{00}$.
 \end{lem}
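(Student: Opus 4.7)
The plan is to replace each jump of $h$ with a short linear ramp, calibrating the widths of the two endpoint ramps so that the area lost on the left exactly cancels the area gained on the right. Concretely, I define $h_\ep$ to be linear from $(-1,-1)$ to $(-1+\ep,\,y_1)$ on $[-1,-1+\ep]$; linear from $(x_i-\ep,\,y_i)$ to $(x_i+\ep,\,y_{i+1})$ on $[x_i-\ep,\,x_i+\ep]$ for each interior breakpoint $i=1,\dots,m-1$; linear from $(1-\bar\ep,\,y_m)$ to $(1,1)$ on $[1-\bar\ep,1]$; and constant equal to $y_i$ on each residual flat interval between consecutive ramps. The hypothesis $2\ep,\,2\bar\ep<\min_i\ell_i$ guarantees that the ramps do not overlap and that every residual flat piece is nondegenerate, so $h_\ep$ is continuous, piecewise linear, nondecreasing, and sends $\pm 1$ to $\pm 1$; hence $h_\ep\in\bar\G$.

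The integral equality is a piece-by-piece comparison with $h$. On each flat segment the two functions agree, so nothing to check. On each interior ramp $[x_i-\ep,\,x_i+\ep]$, the area under $h_\ep$ is $\ep(y_i+y_{i+1})$, which exactly matches the area $\ep y_i+\ep y_{i+1}$ of $h$ on the same interval, so interior ramps do not alter the integral. At the left endpoint $h$ contributes $\ep y_1$ while the ramp contributes $\ep(-1+y_1)/2$, giving a deficit of $\ep(1+y_1)/2$. At the right endpoint $h$ contributes $\bar\ep y_m$ while the ramp contributes $\bar\ep(y_m+1)/2$, giving a surplus of $\bar\ep(1-y_m)/2$. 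The definition $\bar\ep=\ep(1+y_1)/(1-y_m)$ is precisely what makes the deficit equal the surplus, so $\int_{-1}^{1}h_\ep(t)\,dt=\int_{-1}^{1}h(t)\,dt=\sum_i\ell_i y_i$.

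For pointwise convergence, any $t\notin\{x_0,\dots,x_m\}$ lies on a flat piece of $h_\ep$ for all sufficiently small $\ep$, so $h_\ep(t)=h(t)$ eventually; for every $\ep$ we have $h_\ep(\pm 1)=\pm 1 = h(\pm 1)$; and at an interior breakpoint $x_i$ the midpoint of the symmetric ramp gives $h_\ep(x_i)=(y_i+y_{i+1})/2=h(x_i)$ by the definition of the step function. Thus $h_\ep\to h$ pointwise on $[-1,1]$, and since $|g(h_\ep(t))|$ is uniformly bounded by $\max_{[-1,1]}|g|$, the bounded convergence theorem yields $\int_{-1}^{1}g(h_\ep(t))\,dt\to\int_{-1}^{1}g(h(t))\,dt$; the latter equals $\sum_i\ell_i g(y_i)$ by direct evaluation on the step function.

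The remaining assertion is the odd case, and here the calibration of $\bar\ep$ pays off. When $h$ is the odd step function associated to a sequence pair on $[0,1]$, the reflected $2m$-sequence pair on $[-1,1]$ satisfies $\tilde y_1=-y_m$ and $\tilde y_{2m}=y_m$ in the original $[0,1]$-notation, so $\bar\ep=\ep(1+\tilde y_1)/(1-\tilde y_{2m})=\ep$. Both endpoint ramps then have the same width $\ep$, while the interior ramps sit at symmetric breakpoints $\pm\tilde x_j$ with data related by negation, so the whole construction is invariant under the involution $f\mapsto f^*$; hence $h_\ep(-t)=-h_\ep(t)$, i.e.\ $h_\ep\in\bar\G_{00}$. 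The only real obstacle is seeing why the slightly odd-looking formula for $\bar\ep$ is the right one, and that is exactly the endpoint bookkeeping carried out in the second paragraph.
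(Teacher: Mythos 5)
Your construction is exactly the paper's: replace each jump by a short linear ramp of width $2\ep$ at interior breakpoints, width $\ep$ at the left endpoint, and width $\bar\ep$ at the right endpoint, with $\bar\ep$ calibrated so the endpoint area changes cancel; the interior ramps preserve area by the triangle/rectangle balance, the non-overlap of ramps follows from $2\ep,2\bar\ep<\min_i\ell_i$, pointwise convergence holds since each $t$ is eventually outside all ramps (or is a ramp midpoint or endpoint, where $h_\ep=h$ always), and the odd case reduces to $\bar\ep=\ep$ because $\tilde y_1=-y_m=-\tilde y_{2m}$. The only differences are cosmetic: you spell out the deficit/surplus bookkeeping algebraically where the paper reasons with rectangles and triangles, and you explicitly invoke the bounded convergence theorem where the paper simply asserts the integral convergence.
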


 \begin{proof} For $i = 1, \dots, m$ replace the jump at $x_i$ by the line segment connecting the point $(x_i - \ep,y_i)$ to $(x_i + \ep,y_{i+1})$.
 Note that at $x_i$ the line passes through the mid-point $(x_i, \frac{1}{2}(y_i + y_{i+1}))$. In the area under the graphs a
rectangle with base $\ep$ and height $y_{i+1} - y_i$ is replaced by a triangle with base $2 \ep$ and height $y_{i+1} - y_i$ and
so the integral remains unchanged.

 At $x_0 = -1$ replace the jump by the line segment connecting $(-1,-1)$ $ = (x_0,y_0)$ to $(-1 + \ep,y_1)$ and at $x_m = 1$ replace the
 jump by the line connecting $(1 - \bar \ep,y_m)$ to $(1,1) = (x_m,y_{m+1})$. In the area under the graphs a
 triangle on the left with base $\ep$ and height $y_1 - y_0 = 1 + y_1$ is removed and a triangle on the right with base $\bar \ep$
 and height $y_{m+1} - y_m = 1 - y_m$ is added. The definition of $\bar \ep$ implies that the two triangles have the
 same area.  It follows that $\int_{-1}^1 h_{\ep}(t) \ dt = \int_{-1}^1 h(t) \ dt $.

 When $h$ is an associated odd step function, it is clear that each $h_{\ep}$ is odd.  In that case, note that $\bar \ep = \ep$.

 For any $t \in [-1,1]$ it is clear that for $\ep$ sufficiently small $h_{\ep}(t) = h(t)$ and so pointwise convergence is obvious.
 Since $g \circ h_{\ep}$ then converges pointwise to $g \circ h$, the integral results of (\ref{stepeq04}) follow.

 \end{proof} \vspace{.5cm}

\begin{lem}\label{newlem1} Assume that $f \in \CC([-1,1])$ with $\int_{-1}^1 f(t) \ dt = 0$ and $f(\pm 1) = 0$. If $f$ is not identically zero, then
there exist $g_+, g_- \in \G_0$ such that $\int_{-1}^1 f(g_+(t)) \ dt > 0$ and $\int_{-1}^1 f(g_-(t)) \ dt < 0$. If $f$ is even, then we can
choose $g_+, g_- \in \G_{00}$. \end{lem}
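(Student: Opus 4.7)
The plan is to reduce the problem to finding a suitable step function of the type in Definition \ref{stepdef01}, apply Lemma \ref{steplem02} for a piecewise linear approximation in $\bar\G_0$, and then perturb by the identity $i$ to land inside $\G_0$ (respectively $\G_{00}$).

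The key algebraic observation is that there exist $p_1 \in (-1,0)$ and $p_2 \in (0,1)$ with $p_2 f(p_1) - p_1 f(p_2) > 0$. Suppose not: then testing the reverse inequality with $p_1 = -1$ (using $f(-1)=0$) forces $f \leq 0$ on $(0,1]$, and with $p_2 = 1$ forces $f \leq 0$ on $[-1,0)$. Continuity gives $f \leq 0$ on $[-1,1]$, and combined with $\int f = 0$ this forces $f \equiv 0$, contradicting the hypothesis. Continuity of $f$ lets the strict inequality persist for $p_1, p_2$ in the open intervals. Set $y_1 = p_1,\ y_2 = p_2,\ \ell_1 = 2p_2/(p_2-p_1),\ \ell_2 = -2p_1/(p_2-p_1)$; then $\ell_1, \ell_2 > 0$, $\ell_1 + \ell_2 = 2$, $\ell_1 y_1 + \ell_2 y_2 = 0$, and $\ell_1 f(y_1) + \ell_2 f(y_2) = \frac{2(p_2 f(p_1) - p_1 f(p_2))}{p_2-p_1} > 0$.

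Let $h$ be the step function of Definition \ref{stepdef01} associated to the sequence pair $[-1,\ -1+\ell_1,\ 1;\ -1,\ p_1,\ p_2,\ 1]$; by (\ref{stepeq03}), $\int h = 0$. Apply Lemma \ref{steplem02} with $g = f$ in (\ref{stepeq04}) to obtain piecewise linear $h_\epsilon \in \bar\G$ with $\int h_\epsilon = 0$ and, for $\epsilon$ small enough, $\int f(h_\epsilon(t))\,dt > 0$. Because $h_\epsilon$ is constant on subintervals (so it lies in $\bar\G$ but not necessarily in $\G$), set $g_+ = (1-\eta) h_\epsilon + \eta\, i$ for small $\eta > 0$. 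By Proposition \ref{elemgrpprop}(ii), $g_+ \in \G$; linearity of the integral gives $\int g_+ = 0$, so $g_+ \in \G_0$. For $\eta$ small, uniform continuity of $f$ on $[-1,1]$ preserves $\int f(g_+(t))\,dt > 0$. Applying the same construction with $-f$ in place of $f$ produces $g_-$.

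For the even case, $\int_0^1 f = \frac{1}{2}\int_{-1}^1 f = 0$ and $f \not\equiv 0$ on $(0,1)$, so there are $q, q' \in (0,1)$ with $f(q) > 0$ and $f(q') < 0$. Apply the odd step function construction in Definition \ref{stepdef01} with $m = 1$ and positive value $q$: the resulting $h$ takes value $-q$ on $(-1,0)$ and $q$ on $(0,1)$, so $\int h = 0$ (by oddness) and $\int f(h(t))\,dt = 2f(q) > 0$ (by evenness of $f$). Lemma \ref{steplem02} then yields an odd $h_\epsilon \in \bar\G_{00}$ with $\int f(h_\epsilon(t))\,dt > 0$ for $\epsilon$ small. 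Since $i$ is odd, $g_+ = (1-\eta) h_\epsilon + \eta\, i$ remains odd and lies in $\G_{00}$, with $\int f(g_+(t))\,dt > 0$ for $\eta$ small. Using $q'$ in place of $q$ gives $g_- \in \G_{00}$. The only substantive step is the algebraic observation in the second paragraph, which simultaneously encodes the $\G_0$-balance condition $\ell_1 y_1 + \ell_2 y_2 = 0$ and the positivity of $\int f \circ h$; once it is in hand, the rest is a direct application of the step-function machinery.
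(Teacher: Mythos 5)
Your proof is correct and follows the paper's overall strategy: build a piecewise-constant $h$ with $\int_{-1}^1 h = 0$ and $\int_{-1}^1 f\circ h$ of the desired sign, smooth it to $h_\ep \in \bar\G_0$ (resp.\ $\bar\G_{00}$) via Lemma~\ref{steplem02}, and then perturb by a small multiple of $i$ to land strictly inside $\G_0$ (resp.\ $\G_{00}$). Your even case is essentially identical to the paper's.

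Where you genuinely depart is in the existence argument for the $2$-sequence pair in the general case. The paper first fixes a single interior point $a$ with $f(a) \ne 0$, places the second point $b$ near the boundary on the opposite side of $0$, and then invokes $f(\pm 1) = 0$ to let $b \leadsto \pm 1$, so that $\ell_1 f(b) + \ell_2 f(a)$ acquires the sign of $f(a)$; this forces a two-case split on the sign of $a$. You instead prove directly that some $p_1 \in (-1,0)$, $p_2 \in (0,1)$ satisfy $p_2 f(p_1) - p_1 f(p_2) > 0$, by observing that if not, then letting $p_1 \to -1$ and $p_2 \to 1$ (and using $f(\pm 1) = 0$ and continuity) forces $f \le 0$ on all of $[-1,1]$, which together with $\int_{-1}^1 f = 0$ gives $f \equiv 0$. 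This is a tidier variant: it eliminates the case split and the limiting argument in exchange for one compact contradiction, and the companion $g_-$ falls out by running the same argument for $-f$. All the downstream bookkeeping — $\ell_1,\ell_2 > 0$, $\ell_1 + \ell_2 = 2$, $\ell_1 p_1 + \ell_2 p_2 = 0$, the use of (\ref{stepeq03})--(\ref{stepeq04}), and the final perturbation $(1-\eta)h_\ep + \eta\, i$ kept in $\G_0$ by Proposition~\ref{elemgrpprop} — checks out.
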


\begin{proof} First assume that $f$ is even. Observe that if $g$ is odd, then $f \circ g$ is
even and so $\int_{-1}^1 f(g(t)) \ dt = 2 \int_{0}^1 f(g(t)) \ dt$. By assumption, $ f \not= 0$ and $ 2 \int_{0}^1 f(t) \ dt =  \int_{-1}^1 f(t) \ dt = 0.$
So there exist $a_-, a_+ \in (0,1)$ such that $f(a_+) > 0, f(a_-) < 0$.

For $a$ equal to $a_+$ or $a_-$ , let $h$ be the odd step function associated with the $1$-sequence pair $[0,1;0,a,1]$ on $[0,1]$. That is,
 \begin{equation}\label{inteq10}
 h(t) \ = \ \begin{cases}  -a  \quad \text{for} \  -1 < t < 0, \\
a \ \quad \ \text{for} \  0 < t < 1, \\ \pm 1 \quad \ \text{for} \ t = \pm 1, \\
 0 \quad \ \ \ \text{for} \ t = 0.
 \end{cases}
 \end{equation}
 So $\int_0^1 f(h(t)) \ dt = f(a)$. Let $g_{\ep} = \ep i + (1 - \ep) h_{\ep}$
 where for $\ep > 0$ sufficiently small, $h_{\ep} \in \bar \G_{00}$
are the approximating functions from Lemma \ref{steplem02}. Because
$i \in \G_{00}$, $g_{\ep} \in \G_{00}$. As $\ep \leadsto 0$,
 $\int_0^1 f(g_{\ep}(t)) \ dt \leadsto \int_0^1 f(h(t)) \ dt = f(a)$.
 So we can choose $g = g_{\ep}$ with $\ep > 0$, sufficiently small so that
$\int_{0}^1 f(g(t)) \ dt$ has the same sign as $f(a)$. \vspace{.5cm}

For $f$ not necessarily even, we can choose  $a_-, a_+ \in (-1,1) \setminus \{ 0 \}$ such
that $f(a_+) > 0, f(a_-) < 0$. Let $a$ equal $a_+$ or $a_-$
 and let $b \not= a \in (-1,1)$. Define $\ell_1$ and $\ell_2$ by the equations
 \begin{align}\label{neweq1}
 \begin{split}
 \ell_1 \ + \ \ell_2 \ &= \ 2, \\
  \ell_1 b \ + \ \ell_2 a \ &= \ 0,
 \end{split} \end{align}
 So that
 \begin{equation}\label{neweq2}
 \ell_1 \ = \ \frac{2a}{a - b}, \qquad \ell_2 \ = \ \frac{-2b}{a - b}.
 \end{equation}

{\bfseries  Case 1} $(a \in (0,1))$: Choose $b \in (-1,0)$ so that $\ell_1, \ell_2 > 0$.  Let
$x^* = \ell_1 - 1 = (a + b)/(a - b)$ so that $x^* - (-1) = \ell_1, 1 - x^* = \ell_2$.

Because $f(-1) = 0$
 \begin{equation}\label{neweq3}
 \ell_1 f(b) + \ell_2 f(a)  \  \leadsto  \ \frac{2}{a + 1} \cdot f(a).
 \end{equation}
 as $ b  \leadsto -1$.

 Let $h$ be the step function associated with the $2$-sequence pair \\ $[-1, x^*, 1; -1, b, a, 1]$ so that
 $\int_{-1}^1 h(t) \ dt = \ell_1 b +  \ell_2 a = 0$ and
 $\int_{-1}^1 f(h(t)) \ dt = \ell_1 f(b) + \ell_2 f(a)$.  From (\ref{neweq3})
 we can choose $b$ close enough to $-1$ so that
 this has the same sign as $f(a)$.

 As above, choose $g = g_{\ep} = \ep i + (1 - \ep) h_{\ep}$ with
 $\ep > 0$ sufficiently small so that $\int_{-1}^1 \ f(g(t)) \ dt$ has the
 same sign as $f(a)$. By  Lemma \ref{steplem02} each $g_{\ep} \in \G_0$ for $\ep > 0$. \vspace{.5cm}

{\bfseries  Case 2} $(a \in (-1,0))$: Choose $b \in (0,1)$ so that again $\ell_1, \ell_2 > 0$.
Let $x^* = \ell_2 - 1 = (a + b)/(b - a)$ so that $x^* - (-1) = \ell_2, 1 - x^* = \ell_1$.

Because $f(1) = 0$
 \begin{equation}\label{neweq4}
 \ell_1 f(b) + \ell_2 f(a)  \  \leadsto  \ \frac{2}{1 - a} \cdot f(a).
 \end{equation}
 as $ b  \leadsto 1$.

 Let $h$ be the step function associated with the $2$-sequence pair\\ $[-1, x^*, 1; -1, a, b, 1]$ so that
 $\int_{-1}^1 f(h(t)) \ dt = \ell_1 f(b) + \ell_2 f(a)$.  From (\ref{neweq4}) we
 can choose $b$ close enough to $1$ so that
 this has the same sign as $f(a)$.

 As above, choose $g = g_{\ep} = \ep i + (1 - \ep) h_{\ep} \in \G_0$ with  $\ep > 0$
 sufficiently small so that $\int_{-1}^1 \ f(g(t)) \ dt$ has the
 same sign as $f(a)$.

 \end{proof} \vspace{.5cm}

 Now define the sequences $\{ p_0, p_1, \dots \}$ and $\{ q_0, q_1, \dots \}$ in $\CC([-1,1])$ by:
  \begin{equation} \label{neweq5}
  p_k(t) \ = \ t^{2k+1}, \qquad q_k = p_{2k} \odot p_{2k+1},
  \end{equation}
  for $k = 0, 1, \dots$.

  \begin{lem}\label{newlem3} The sequence $\{ p_0, p_1, \dots \}$ is a linearly
  independent infinite sequence of polynomials in $\G_{00}$.

  The sequence $\{ q_0, q_1, \dots \}$ is an infinite sequence in
  $\G_0$ with $\{ q_0^e, q_1^e, \dots \}$  a linearly independent sequence of even functions.
  Each $q_k$ is continuously differentiable
  on $[-1,0]$ and on $[0,1]$ and satisfies $q_k(0) = 0$. \end{lem}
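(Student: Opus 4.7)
The plan is to verify each clause by direct computation from the explicit polynomial formulas, and to reduce the linear independence of $\{q_k^e\}$ to that of a set of monomials in a single variable.

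First, for $\{p_k\}$, everything is immediate: $p_k(t)=t^{2k+1}$ is odd, strictly increasing on $[-1,1]$, and sends $\pm 1$ to $\pm 1$, so $p_k\in\G$, and since $p_k$ is odd it lies in $\G_{00}$. Linear independence follows because the $p_k$ have pairwise distinct degrees.

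Next, for $q_k=p_{2k}\odot p_{2k+1}$, Proposition \ref{elemgroupprop2}(i) gives $q_k\in\G$, and by (\ref{inteq2})
\[
\int_{-1}^{1} q_k(t)\,dt \ = \ \tfrac{1}{4}\!\left[\int_{-1}^{1}p_{2k}(s)\,ds+\int_{-1}^{1}p_{2k+1}(s)\,ds\right] \ = \ 0,
\]
because each $p_j$ is odd; hence $q_k\in\G_0$. Unpacking (\ref{compeq}), I would write
\[
q_k(t) \ = \ \tfrac{1}{2}\bigl((2t+1)^{4k+1}-1\bigr) \text{ on } [-1,0], \qquad q_k(t)\ =\ \tfrac{1}{2}\bigl((2t-1)^{4k+3}+1\bigr) \text{ on } [0,1].
\]
Each restriction is a polynomial in $t$, so $q_k$ is $C^\infty$ (in particular $C^1$) on each closed half-interval separately, and both formulas yield $q_k(0)=0$. (Note that the one-sided derivatives at $0$ are $4k+1$ and $4k+3$, so $q_k$ is not globally $C^1$ at $0$, which is why the statement is split into the two half-intervals.)

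The one bit that needs actual algebra is the even-part computation and the subsequent linear independence. For $t\in[0,1]$, use $(1-2t)^{4k+1}=-(2t-1)^{4k+1}$ to get
\[
q_k^e(t)\ =\ \tfrac{1}{2}\bigl(q_k(t)+q_k(-t)\bigr) \ =\ \tfrac{1}{4}\bigl[(2t-1)^{4k+3}-(2t-1)^{4k+1}\bigr]\ =\ (2t-1)^{4k+1}\,t(t-1),
\]
since $(2t-1)^2-1=4t(t-1)$. Given a finite vanishing combination $\sum_{k=0}^{K}c_k q_k^e\equiv 0$, restrict to $(0,1)$ and divide by the nonzero factor $t(t-1)$ to obtain $\sum_{k=0}^{K}c_k (2t-1)^{4k+1}\equiv 0$ on $(0,1)$; with $s=2t-1\in(-1,1)$ this is a polynomial identity involving the distinct odd powers $s,s^{5},s^{9},\dots$, forcing every $c_k=0$.

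There is no genuine obstacle; the only subtlety is recognizing that the two half-interval formulas for $q_k$ conspire so that $q_k^e$ factors through $t(t-1)$. That factor is simultaneously responsible for $q_k^e(\pm 1)=0=q_k^e(0)$ and for reducing the linear independence problem to distinct monomials in $s$.
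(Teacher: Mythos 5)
Your proof is correct and follows essentially the same route as the paper: the paper derives the general formula $(h_1\odot h_2)^e(t)=\tfrac{1}{4}[h_2(2t-1)-h_1^*(2t-1)]$ on $[0,1]$ and then asserts linear independence of $\{q_k^e\}$ as clear, which is exactly your specialization $q_k^e(t)=\tfrac{1}{4}[(2t-1)^{4k+3}-(2t-1)^{4k+1}]$. Your only addition is making that last step explicit via the factorization $q_k^e(t)=(2t-1)^{4k+1}\,t(t-1)$, which cleanly reduces the question to distinct odd monomials in $s=2t-1$.
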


  \begin{proof} The results for the odd power polynomials $\{ p_0, p_1, \dots \}$ are obvious.

  If $h_1, h_2 \in \G_{0}$ with $h_1^* \not= h_2$, then $h_1 \odot h_2 \in \G_0 \setminus \G_{00}$.
It is in $\G_0$ by  (\ref{inteq2}).
For $t \in [0,1]$,
\begin{align}\label{inteq15}
\begin{split}
 (h_1 \odot h_2)^e(t) \ = \ &\frac{1}{2}[(h_1 \odot h_2)(t) +  (h_1 \odot h_2)(-t)] \ = \\
 \frac{1}{4}[(h_2(2t - & 1) + 1) + (h_1(-2t + 1) - 1)] \\ = \ \frac{1}{4}&[ h_2(2t  - 1) - h^*_1(2t - 1)].
\end{split}
\end{align}
In particular, this applies when $h_1$ and $h_2$ are distinct elements of $\G_{00}$.

From (\ref{inteq15}) it is clear that the sequence $\{ q_0^e, q_1^e, \dots \}$  is linearly independent. Differentiability on
$[-1,0]$ and $[0,1]$ is obvious.

 \end{proof} \vspace{.5cm}

For an $n$-tuple $\mathbf{f} = (f_1, \dots, f_n) \in \G^n$,  we define the continuous
linear map $L_{\mathbf{f}} : \CC([-1,1]) \to \R^n$ \index{$L_{\mathbf{f}}$}by
\begin{equation}\label{inteq16b}
L_{\mathbf{f}}(g) \ = \ (\int_{-1}^1 g(f_1^{-1}(t))) \ dt, \dots, \int_{-1}^1 g(f_n^{-1}(t))) \ dt).
\end{equation}
Thus, for $g \in \G$, $L_{\mathbf{f}}(g)  = (Q(g,f_1), \dots, Q(g,f_n))$.

\begin{df}\label{newdef7a} We define the following sets of $n$-tuples in $\G_0^n$.
  \begin{align}\label{inteq11aba}
  \begin{split}
  \mathcal{L}\mathcal{I}\mathcal{N}_n \ = \ \{ \mathbf{f} = \ &(f_1, \dots, f_n)  \in \G_0^n : \\
  \{ i, f_1, \dots, f_n \}  \ & \text{is linearly independent} \}, \\
  \mathcal{L}\mathcal{I}\mathcal{N}_n^+ \ = \ \{ \mathbf{f} = \ &(f_1, \dots, f_n)  \in \G_0^n : \\
  \{ f_1^e, \dots, f_n^e \}  \ & \text{is linearly independent} \},
  \end{split}\end{align}
  \index{$\mathcal{L}\mathcal{I}\mathcal{N}_n$}  \index{$\mathcal{L}\mathcal{I}\mathcal{N}_n^+$}
  \end{df}

 \begin{lem}\label{newlem4} If for $\{f_1, \dots, f_n \} \subset \G_0$ the
 sequence $\{f_1^e, \dots, f_n^e \}$ is linearly independent, then
 $\{i, f_1, \dots, f_n \}$ is linearly independent, i.e.
 $\mathcal{L}\mathcal{I}\mathcal{N}_n^+ \subset \mathcal{L}\mathcal{I}\mathcal{N}_n$. \end{lem}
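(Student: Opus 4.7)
The plan is to prove the contrapositive directly by taking even parts of any supposed linear relation involving $i$ and the $f_j$'s, exploiting the fact that $i$ is an odd function.

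Suppose we are given $\{f_1, \dots, f_n\} \subset \G_0$ with $\{f_1^e, \dots, f_n^e\}$ linearly independent, and suppose $c_0, c_1, \dots, c_n \in \R$ satisfy
\begin{equation*}
c_0 \, i + c_1 f_1 + \dots + c_n f_n \ = \ 0.
\end{equation*}
The goal is to show every $c_k = 0$. The key observation is that the operation $f \mapsto f^e = \frac{1}{2}(f - f^*)$ is $\R$-linear on $\CC([-1,1])$ by equation (\ref{inteq6}), and the identity $i(t) = t$ is odd, so
\begin{equation*}
i^e(t) \ = \ \frac{i(t) + i(-t)}{2} \ = \ 0.
\end{equation*}

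Applying the even-part operator to the supposed linear relation and using linearity together with $i^e = 0$ yields
\begin{equation*}
c_1 f_1^e + \dots + c_n f_n^e \ = \ 0.
\end{equation*}
The linear independence hypothesis on $\{f_1^e, \dots, f_n^e\}$ then forces $c_1 = \dots = c_n = 0$. Substituting back into the original relation gives $c_0 \, i = 0$, and since $i$ is not identically zero (it is the identity map), $c_0 = 0$ as well. This shows $\{i, f_1, \dots, f_n\}$ is linearly independent, establishing the inclusion $\mathcal{L}\mathcal{I}\mathcal{N}_n^+ \subset \mathcal{L}\mathcal{I}\mathcal{N}_n$.

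There is no real obstacle here: the whole argument hinges on the trivial but useful fact that projecting onto the even part kills $i$, so any linear relation immediately reduces to one among the $f_j^e$. The only thing to verify is that the even-part projection is linear, which is exactly (\ref{inteq6}).
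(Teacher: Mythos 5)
Your proof is correct and follows essentially the same route as the paper: apply the even-part projection to the supposed linear relation, use linearity of $f \mapsto f^e$ together with $i^e = 0$ to reduce to a relation among the $f_j^e$, invoke their independence, and then conclude $c_0 = 0$ since $i \neq 0$. The paper's proof is identical in substance, merely more terse about why $i$ drops out.
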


  \begin{proof} If $z_0 i + \sum_{k=1}^n z_k f_k = 0$ then $\sum_{k=1}^n z_k f_k^e = 0$
  and so by linear independence of the even list,
  $z_k = 0$ for $k = 1, \dots, n$.  Since $i \not= 0$, it follows that $z_0 = 0$ as well.

 \end{proof} \vspace{.5cm}

 \begin{lem}\label{newlem5} Assume that $\mathbf{f} = (f_1, \dots, f_n) \in  \mathcal{L}\mathcal{I}\mathcal{N}_n$.
 There exists $g \in \G_{00}$ such that
 $\{i, f_1, \dots, f_n, g \} \in  \mathcal{L}\mathcal{I}\mathcal{N}_{n+1}$ and $L_{\mathbf{f}}(g) = 0$, i.e.
 $Q(g,f_k) = 0$ for $k = 1,\dots, n$.

 If $\mathbf{f} \in \mathcal{L}\mathcal{I}\mathcal{N}_{n}^+$, then
 there exists $g \in \G_{0}$ such that
 $\{ f_1^e, \dots, f_n^e, g^e \} \in \mathcal{L}\mathcal{I}\mathcal{N}_{n+1}^+$ and $L_{\mathbf{f}}(g) = 0$.
\end{lem}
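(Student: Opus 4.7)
The plan is to realize $g$ as a small perturbation of the identity $i$ in a direction lying in $\ker L_{\mathbf{f}}$. The key preliminary observation is that $L_{\mathbf{f}}(i) = 0$: for each $k$, $Q(i, f_k) = \int_{-1}^1 f_k^{-1}(t)\,dt = -\int_{-1}^1 f_k(t)\,dt = 0$ by (\ref{sumeq2}) together with $f_k \in \G_0$. However, $i$ itself lies in the span we must avoid, so the perturbation direction must simultaneously lie in $\ker L_{\mathbf{f}}$ and fail to lie in that span.

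For the first assertion, I would work with the polynomial sequence $p_k(t) = t^{2k+1}$ from Lemma \ref{newlem3} and form the $N$-dimensional subspace $A_N = \mathrm{span}\{p_1 - i, p_2 - i, \ldots, p_N - i\} \subset \CC_{00}$ of odd polynomials vanishing at $\pm 1$. Since $L_{\mathbf{f}}:\CC([-1,1]) \to \R^n$ is continuous and linear, $A_N \cap \ker L_{\mathbf{f}}$ has dimension at least $N - n$, while the obstruction subspace $V = \mathrm{span}\{i, f_1, \ldots, f_n\}$ has dimension $n+1$ by the hypothesis $\mathbf{f} \in \mathcal{L}\mathcal{I}\mathcal{N}_n$. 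Choosing $N \geq 2n+2$ produces an $h' \in (A_N \cap \ker L_{\mathbf{f}}) \setminus V$ by elementary dimension counting. Since $h'$ is a smooth odd function with $h'(\pm 1) = 0$, its derivative is bounded on $[-1,1]$, so for $\epsilon > 0$ sufficiently small, $g = i + \epsilon h'$ is strictly increasing; being also odd and satisfying $g(\pm 1) = \pm 1$, we get $g \in \G_{00}$. Linearity gives $L_{\mathbf{f}}(g) = 0$, and $h' \notin V$ forces $g \notin V$, so $\{i, f_1, \ldots, f_n, g\}$ is linearly independent.

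For the second assertion, I would substitute the sequence $q_k = p_{2k} \odot p_{2k+1}$ from Lemma \ref{newlem3}, which lies in $\G_0$, is $C^1$ on each of $[-1,0]$ and $[0,1]$, satisfies $q_k(\pm 1) = \pm 1$, and whose even parts $\{q_k^e\}$ are linearly independent. Set $g' = \sum_{k=0}^N c_k(q_k - i)$; then $g'(\pm 1) = 0$, $\int_{-1}^1 g' = 0$ since each $q_k$ and $i$ lies in $\G_0$, and $(g')^e = \sum c_k q_k^e$ because $i$ is odd. The condition $L_{\mathbf{f}}(g') = 0$ is $n$ linear constraints on $c \in \R^{N+1}$, while the bad set $\{c : \sum c_k q_k^e \in \mathrm{span}\{f_1^e, \ldots, f_n^e\}\}$ has dimension at most $n$ by the linear independence of the $q_k^e$. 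For $N \geq 2n$, dimension counting produces a $c$ satisfying the kernel condition and avoiding the bad set. Because the $q_k$'s have bounded one-sided derivatives, for $\epsilon > 0$ small enough $g = i + \epsilon g'$ is strictly increasing, hence lies in $\G_0$; then $L_{\mathbf{f}}(g) = 0$ and $g^e = \epsilon(g')^e \notin \mathrm{span}\{f_1^e, \ldots, f_n^e\}$.

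The main technical obstacle is verifying that the perturbed $g$ actually lies in $\G$, i.e., remains strictly increasing; this is handled by the smoothness (resp. piecewise smoothness) of the building blocks $p_k$ and $q_k$, which bounds the derivative of the perturbation and thus allows $\epsilon$ to be chosen small. The remainder of the argument is straightforward codimension counting for the finite-codimensional $\ker L_{\mathbf{f}}$ against a finite-dimensional obstruction.
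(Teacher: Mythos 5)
Your proposal is correct and follows essentially the same route as the paper: a finite-dimensional dimension count in the span of the odd polynomials $p_k - p_0$ (resp.\ of the $q_k$'s, producing even parts $q_k^e$) to find a direction in $\ker L_{\mathbf{f}}$ outside the obstruction span, followed by a small perturbation $g = i + \ep p$ whose derivative stays positive, with $L_{\mathbf{f}}(i)=0$ transferring the kernel condition from $p$ to $g$. The bounds $N \geq 2n+2$ and $N \geq 2n$ match the paper's choices of $2n+2$ and $2n+1$ basis vectors, and the cosmetic difference of working with $q_k - i$ rather than directly with $q_k^e$ is immaterial since $i$ is odd.
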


  \begin{proof} We use the sequences from Lemma \ref{newlem3}.

  Let $V$ be the linear subspace of $\CC([-1,1])$ spanned by $p_{k+1} - p_{0}$ for $k = 0, \dots, 2n+1$.
  Thus, $V$ is a vector space
  of odd polynomials with the dimension of $V$ equal to $2n + 2$. Furthermore, $p(\pm 1) = 0$ for all $p \in V$.

 The restriction of the linear map $L_{\mathbf{f}}$ defines a linear map from $V $ to $\R^{n}$
  has a kernel with dimension at least $n +2$.

    On the other hand, the intersection of $V$ with the subspace spanned by
    $\{i, f_1, \dots, f_n \}$ has dimension at most $n + 1$.

    It follows that there exists $p$ in the kernel of $L_{\mathbf{f}}$ such
    that $\{i, f_1, \dots, f_n,$ $ p \}$ is linearly independent. Multiplying by
    a suitably small positive constant we may assume that the absolute value
    of the derivative of $p$ is bounded by $\frac{1}{2}$ on $[-1,1]$.

    Let $g = i + p$. Since $p(\pm 1) = 0$, $g(\pm 1) = \pm 1$ and since the
    derivative of $g$ is at least $\frac{1}{2}$ on $[-1,1]$, $g$
    is increasing.  Thus, $g \in \G$.  It is odd and so is in $\G_{00}$.
    Since $i \in \{i, f_1, \dots, f_n \}$, it follows that
    $\{i, f_1, \dots, f_n, g \}$ is linearly independent. Since $Q(i,f) = 0 $
    for all $f \in \G_0$, it follows that $Q(g,f_k) = 0$ for
    $k = 1, \dots, n$.

    Now assume that  $\{f_1^e, \dots, f_n^e \}$ is  linearly independent.
    This time let $V$ be the vector space spanned by
    $q_k^e$ for $k = 0, \dots, 2n$ and so with dimension $2n + 1$. Each
    $p \in V$ is an even function with $\int_{-1}^1 p(t) \ dt = 0$ and $p(\pm 1) = 0$.
    In addition, each function is continuously differentiable on $[-1,0]$ and on $[0,1]$.
    The kernel of the restriction of $L_{\mathbf{f}}$ has dimension
    at least $n+1$.

    The intersection of $V$ with the subspace spanned by  $\{f_1^e, \dots, f_n^e \}$ has dimension at most $n$.

     It follows that there exists $p$ in the kernel of $L_{\mathbf{f}}$ such that
     $\{f_1^e, \dots, f_n^e, p \}$ is linearly independent. Multiplying by
    a suitably small positive constant we may assume that the absolute value of
    the derivative of $p$ is bounded by $\frac{1}{2}$ on $[-1,0]$ and on
    $[0,1]$.  As before, let $g = i + p \in \G$.  Since $\int_{-1}^1 p(t) = 0$,
    $g \in \G_0$. Furthermore, $g^e = p$ and so
    $\{f_1^e, \dots, f_n^e, g^e \}$ is linearly independent.  As before, $Q(g,f_k) = 0$ for
    $k = 1, \dots, n$.

 \end{proof} \vspace{.5cm}

We will need a bit of linear algebra folklore.

\begin{lem}\label{genericlem0a} Let $V$ be a normed linear space.

(a) If $V_0$ is a finite dimensional subspace of $V$, then $V_0$ is closed in $V$.

(b) For any positive integer $n$, the set $\{ (v_1, \dots, v_n) \in V^n : \{ v_1, \dots, v_n \} $
is linearly independent $ \}$
is a $G_{\delta}$ subset of $V^n$.
\end{lem}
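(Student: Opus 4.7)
The plan is to establish (a) by the standard compactness-of-the-unit-sphere argument and then use (a) to show that (b) is in fact an open condition, so the $G_\delta$ claim is automatic.

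For (a), I would fix an algebraic basis $e_1, \dots, e_d$ of $V_0$ and consider the linear bijection $T: \R^d \to V_0$ defined by $T(c_1, \dots, c_d) = \sum_i c_i e_i$. Continuity of $T$ is immediate from the triangle inequality, giving $\|T(c)\| \leq (\sum_i \|e_i\|) \|c\|_\infty$. For the inverse direction, the function $c \mapsto \|T(c)\|$ is continuous on $\R^d$ and, since $T$ is injective, strictly positive on the compact unit sphere $\{c : \|c\|_\infty = 1\}$, where it therefore attains a positive minimum $\alpha$. By linearity this yields $\|T(c)\| \geq \alpha \|c\|_\infty$ for all $c \in \R^d$, so $T^{-1}$ is bounded and $T$ is a bi-Lipschitz homeomorphism. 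Since $\R^d$ is complete, so is $V_0$, and a complete subset of any metric space is closed.

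For (b), I would prove the stronger statement that the set $L \subset V^n$ of linearly independent $n$-tuples is actually open in $V^n$; a fortiori it is $G_\delta$. Let $\mathbf{v} = (v_1, \dots, v_n) \in L$. Its span is a finite-dimensional subspace with basis $\{v_1, \dots, v_n\}$, so the argument from (a) applied to this basis produces a constant $\alpha > 0$ satisfying
\begin{equation*}
\Big\| \sum_{i=1}^n c_i v_i \Big\| \ \geq \ \alpha \max_{1 \leq i \leq n} |c_i| \qquad \text{for all } c \in \R^n.
\end{equation*}
Now fix $\delta = \alpha / (2n)$ and suppose $\mathbf{v}' = (v_1', \dots, v_n') \in V^n$ satisfies $\|v_i' - v_i\| < \delta$ for each $i$. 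For any nonzero $c \in \R^n$ the reverse triangle inequality gives
\begin{equation*}
\Big\| \sum_i c_i v_i' \Big\| \ \geq \ \Big\| \sum_i c_i v_i \Big\| - \sum_i |c_i| \|v_i' - v_i\| \ \geq \ (\alpha - n \delta) \max_i |c_i| \ = \ \tfrac{\alpha}{2} \max_i |c_i| \ > \ 0,
\end{equation*}
so $\mathbf{v}'$ is also linearly independent. Hence $L$ contains an open $\|\cdot\|_\infty$-ball around each of its points in the product topology on $V^n$.

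The only real content in the argument is (a), where one must be careful that the compactness-of-sphere minimum is positive (this is exactly where injectivity of $T$, i.e.\ the linear independence of the basis, is used). Once that is established, both the closedness in (a) and the openness in (b) follow as direct consequences of the two-sided norm equivalence $\alpha \|c\|_\infty \leq \|T(c)\| \leq \beta \|c\|_\infty$.
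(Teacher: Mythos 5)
Your proof is correct, and for part~(a) it follows essentially the same path as the paper: pick a basis of $V_0$, compare the norm pulled back to $\R^d$ with a standard norm via compactness of the unit sphere, conclude that $V_0$ is a complete (hence closed) subset. The cosmetic difference of using $\|\cdot\|_\infty$ on $\R^d$ rather than the Euclidean norm is immaterial.

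For part~(b) you take a genuinely different and in fact stronger route. The paper is content to show that the complement (the set of linearly \emph{dependent} $n$-tuples) is $F_\sigma$: for each $k$ it considers the compact annular boxes $A_K^k = \{x \in \R^k : K^{-1} \leq |x_i| \leq K\}$, observes that the set of $k$-tuples admitting a dependence relation with coefficient vector in $A_K^k$ is closed, takes countable unions over $K$ to get an $F_\sigma$ set $W_k$, and then expresses the dependent $n$-tuples as a finite union of preimages of the $W_k$ under coordinate projections. You instead prove the sharper statement that the set $L$ of linearly independent $n$-tuples is \emph{open}, using the two-sided norm bound $\alpha \max_i |c_i| \leq \|\sum_i c_i v_i\|$ from part~(a) together with a reverse-triangle-inequality perturbation estimate; the $G_\delta$ conclusion is then automatic since open sets are $G_\delta$. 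Your argument is cleaner and yields more: it eliminates the need for the $F_\sigma$ bookkeeping over subsets of coordinates entirely. The one thing the paper's formulation would also supply, should it ever be needed, is an explicit countable family of closed sets exhausting the complement; but for the purposes to which this lemma is put in the paper (Baire-category genericity arguments), openness is strictly better.
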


\begin{proof} (a) Let $v_1, \dots, v_n$ be a basis for $V_0$ and let
$M = \max_i || v_i ||$. For $(x_1, \dots, x_n) \in \R^n$
let $J(x_1, \dots, x_n) = \sum_i x_i v_i$. The linear isomorphism $J$ is
continuous, since the linear operations are continuous
in $V$.
Define $|| (x_1, \dots, x_n) ||_1 = || J(x_1, \dots, x_n) || \leq nM \cdot || (x_1, \dots, x_n) ||_0$
where $|| \cdot ||_0$ is the
Euclidean norm on $\R^n$.   Since $|| \cdot ||_1$ is continuous, and the unit sphere
in $\R^n$ is compact, there exists $m > 0$ such that
$|| \cdot ||_1 \geq m \cdot || \cdot ||_0$ on $\R^n$.  It follows that with respect
to the norm $|| \cdot ||_1$, $\R^n$ is complete and
so $V_0$ is a complete subspace of $V$. A  subset of a metric space is  closed if it is complete.

(b) For $K, k $  positive integers, let $A^k_K = \{ (x_1,\dots, x_k) \in \R^k : K^{-1} \leq |x_i| \leq K$
for $i \in [k] \}$. The set
$(v_1, \dots, v_k) \in V^k$ such that $\sum_i x_i v_i = 0$ for some $(x_1,\dots, x_k) \in A^k_K$
is closed because $A^k_K$
is compact. Take the union over the positive integers $K$, we obtain an $F_{\s}$ subset $W_k$ of
$V^k$. An $n$-tuple $(v_1, \dots, v_n) \in V^n$
is linearly dependent if and only if it projects to $W_k$ for some $k \leq n$, where the
projection omits $n-k$ vectors and renumbers the remaining $k$.
It follows that the set of linearly dependent $n$-tuples is an $F_{\s}$.
\end{proof}
\vspace{.5cm}

Finally, we recall a version of the Separating Hyperplane Theorem.

\begin{theo}\label{newtheo6} Let $C$ be a convex subset of $\R^n$ with $0 \in C$.  If
$0 \not\in C^{\circ}$, then there
exists a nonzero vector $z \in \R^n$ such that $\sum_{k=1}^n \ z_kc_k \leq 0$ for all $c \in C$. \end{theo}

\begin{proof}  Let $V$ be the affine subspace generated by $C$.  Since $0 \in C$, $V$ is a linear subspace of $\R^n$.

{\bfseries Case 1}($V$ is a proper subspace of $\R^n$): If $z$ a nonzero vector perpendicular to $V$, then
$\sum_{k=1}^n \ z_kc_k = 0$ for all $c \in C$.

{\bfseries Case 2}($V =\R^n$): For a convex set $C$, the \emph{relative interior} $ri C$
is the set of points $c \in C$ such that
there exists an open set $U$ in $\R^n$ with $c \in U$ such that $U \cap V \subset C$.
When $V = \R^n$ this is just the interior
$C^{\circ}$.  So if $0 \not\in C^{\circ}$, then $0 \not\in ri C$ and the Theorem of the
Separating Hyperplane,
\index{separating hyperplane} (see, e.g. \cite{MT} p. 38) says that there exist $z \in \R^n \setminus \{ 0 \}$ and
$b \in \R$ such that $\sum_{k=1}^n \ z_k 0 \geq b$ and $\sum_{k=1}^n \ z_kc_k \leq b$ for all $c \in C$. Since $0 \in C$
it is clear that $b = 0$.

\end{proof}

 \vspace{1cm}

 \section{Tournaments of Generic $n$-tuples}
\vspace{.5cm}

The following example illustrates why we will focus upon the proper elements of $\G$, i.e. the elements of $\G_0$.

\begin{ex}\label{ex} Not every edge of the digraph $\Gamma_{\G}$ is contained in a $3$-cycle. \end{ex}

\begin{proof} Recall that $i \in \G_{00}$ is the identity with $i(t) = t$. Assume that $g \in \G$ with
$g \geq i$ and $g \not= i$.  That is, $g(t) \geq t$ for all $t \in [-1,1]$ and the inequality is strict for
some $t$. It follows that $\int_{-1}^1 g(t) \ dt > 0$, i.e. $g \circ i^{-1} \in \G_+$. Thus, $i \to g$ in
$\Gamma_{\G}$. If $g \to f$, then
\begin{equation}\label{inteq7}
f \circ i^{-1} = f = (f \circ g^{-1}) \circ g \geq f \circ g^{-1},
\end{equation}
and the latter inequality is not an equation.  It follows that
\begin{equation}\label{inteq8}
\int_{-1}^1 f(t) \ dt \ > \ \int_{-1}^1 f(g^{-1}(t)) \ dt \ > \ 0.
\end{equation}
Hence, $i \to f$ and  $f \not\in \G_0$.

Thus, $(i,g)$ is not contained in a $3$-cycle.  Furthermore, there does not exists $f \in \G_0$ such that $g \to f$.

\end{proof}
\vspace{.5cm}

 In contrast with the example is the following result which is essentially the continuous
 time version of Theorem 2 of \cite{FT}.
 It

 \begin{theo}\label{fttheo1}
If $f \in \G_0 \setminus \{ i \}$, then there exist $g_1, g_2 \in \G_{0}$ such that
\begin{equation}\label{inteq9}
  \int_{-1}^1 f(g_1^{-1}(t)) \ dt \ > \ 0 \ > \  \int_{-1}^1 f(g_2^{-1}(t)) \ dt.
\end{equation}
That is,  $Q(f,g_1) > 0 > Q(f,g_2)$ and so  $g_1 \to f \to g_2$ in $\Gamma_{\G}$.

If $f \in \G_0 \setminus \G_{00}$, then $g_1$ and $g_2$ can be chosen in $\G_{00}$.
\end{theo}

\begin{proof} This follows easily from Lemma \ref{newlem1} applied to $f - i$ and to $f^e$,
but we will omit the details and instead
derive the result later from a more general theorem.

\end{proof}

  {\bfseries Remark:} Note that $Q(g,i) = 0$ for all $g \in \G_0$ and so there does
  not exist $g \in \G_0$ such that $g \to i$ or $i \to g$ in
$\Gamma_{\G}$.
\vspace{.5cm}

For an $n$-tuple $\mathbf{f} = (f_1, \dots, f_n) \in \G^n$, we define the
\emph{associated digraph}  $R[\mathbf{f}]$  \index{$R[\mathbf{f}]$}
on $[n]$ by
\begin{equation}\label{inteq16}
(i,j) \ \in R[\mathbf{f}] \ \Longleftrightarrow \  Q(f_j,f_i) \ > \ 0
  \ \Longleftrightarrow \ (f_i, f_j) \in \Gamma_{\G}.
\end{equation}
\index{digraph associated to $\mathbf{f}$}\index{digraph!associated}.
That is, $R[\mathbf{f}]$ is the digraph obtained by
pulling back  $\Gamma_{\G}$ from $\G$ to $[n]$ via the map $i \mapsto f_i$.

For $\mathbf{f}$ we define the \emph{associated  matrix} $Q[\mathbf{f}] $ \index{$Q[\mathbf{f}]$}  by
\begin{equation}\label{inteq16a}
Q[\mathbf{f}]_{ij} \ = \ Q(f_j,f_i).
\end{equation}
\index{matrix associated to $\mathbf{f}$}. Thus, $Q[\mathbf{f}] $ is a real $n \times n$ anti-symmetric matrix with
$Q[\mathbf{f}]_{ij} > 0$ if and only if $f_i \to f_j$ in $\Gamma_{\G}$.


We extend Definition \ref{newdef7a}.

\begin{df}\label{newdef7} We define the following sets of $n$-tuples in $\G^n$.
 \begin{align}\label{inteq11aa}
  \begin{split}
  \mathcal{T}\mathcal{O}\mathcal{U}\mathcal{R}_n \ = \ \{ \mathbf{f} = \ &(f_1, \dots, f_n) \in \G^n : \\
  Q(f_i,f_j) \not= 0 \ &\text{for all} \ i,j \in [n] \ \text{with} \ i \not= j \}
  \end{split}\end{align}
  \index{$\mathcal{T}\mathcal{O}\mathcal{U}\mathcal{R}_n$}
  \begin{align}\label{inteq11abab}
  \begin{split}
  \mathcal{L}\mathcal{I}\mathcal{N}_n \ = \ \{ \mathbf{f} = \ &(f_1, \dots, f_n)  \in \G_0^n : \\
  \{ i, f_1, \dots, f_n \}  \ & \text{is linearly independent} \}, \\
  \mathcal{L}\mathcal{I}\mathcal{N}_n^+ \ = \ \{ \mathbf{f} = \ &(f_1, \dots, f_n)  \in \G_0^n : \\
  \{ f_1^e, \dots, f_n^e \}  \ & \text{is linearly independent} \},
  \end{split}\end{align}
  \index{$\mathcal{L}\mathcal{I}\mathcal{N}_n$}  \index{$\mathcal{L}\mathcal{I}\mathcal{N}_n^+$}
  \end{df}

Thus, $\mathbf{f} \in \mathcal{T}\mathcal{O}\mathcal{U}\mathcal{R}_n$ if and
only if the digraph $R[\mathbf{f}]$ is a tournament on $[n]$
in which case it is called
the
\emph{tournament associated to }$\mathbf{f} \in \mathcal{T}\mathcal{O}\mathcal{U}\mathcal{R}_n$.
\index{tournament associated to $\mathbf{f}$}\index{tournament!associated}.

From Lemma \ref{newlem4} we see that
$ \mathcal{L}\mathcal{I}\mathcal{N}_n^+ \subset  \mathcal{L}\mathcal{I}\mathcal{N}_n$. Notice that for these
we restrict to $n$-tuples in $\G_0^n$.

Our major tool is the following theorem.

\begin{theo}\label{newtheo8} If $\mathbf{f} = (f_1, \dots, f_n)  \in  \mathcal{L}\mathcal{I}\mathcal{N}_n$,
then the interior
$L_{\mathbf{f}}(\G_0)^{\circ}$ is a convex subset of $\R^n$ containing $0 \in \R^n$. Furthermore, this set
equals $L_{\mathbf{f}}(\{ g \in \G_0: (f_1, \dots, f_n, g) \in  \mathcal{L}\mathcal{I}\mathcal{N}_{n+1} \})^{\circ}$.

If $\mathbf{f} =  (f_1, \dots, f_n)  \in  \mathcal{L}\mathcal{I}\mathcal{N}_n^+$, then
$L_{\mathbf{f}}(\G_{00})^{\circ}$ is a convex subset of $\R^n$ containing $0 \in \R^n$. Furthermore,
$L_{\mathbf{f}}(\G_0)^{\circ} =  L_{\mathbf{f}}(\{ g \in \G_0:
 (f_1, \dots, f_n, g) \in  \mathcal{L}\mathcal{I}\mathcal{N}_{n+1}^+ \})^{\circ}$.
\end{theo}

\begin{proof} {\bfseries Step 1:} $L_{\mathbf{f}}(\G_0)$ is a convex set containing $0$ in its interior and if
$\mathbf{f}  \in  \mathcal{L}\mathcal{I}\mathcal{N}_n^+$, then $L_{\mathbf{f}}(\G_{00})$ is a convex set containing
$0$ in its interior. \vspace{.25cm}

Since $L_{\mathbf{f}}$ is linear and $\G_{0}, \G_{00}$ are convex, their images are convex. Since $i \in \G_{00}$ and
$L_{\mathbf{f}}(i) = 0$, each convex image contains $0$.

If $0$ is not in $L_{\mathbf{f}}(\G_{0})^{\circ}$, then by the Separating Hyperplane Theorem, Theorem \ref{newtheo6},
there exists $z \in \R^n \setminus \{ 0 \}$ such that $\sum_{k=1}^n z_k Q(g, f_k) \leq 0$ for all $g \in \G_0$.
Hence, by (\ref{coneq4}) $\sum_{k=1}^n z_k Q( f_k,g) \geq 0$ for all $g \in \G_0$.  So with $F = \sum_{k=1}^n z_k f_k$,
$\int_{-1}^1 F(g^{-1}(t)) \ dt \geq 0$ for all $g \in \G_0$.  Let $z_0 = - \sum_{k=1}^n z_k$ and $\bar F = F + z_0 i$.
Since $\{i, f_1, \dots, f_n \} \subset \G_0$, $\int_{-1}^1 \bar F(t) \ dt = 0$, $\bar F(\pm 1) = 0$ and
$\int_{-1}^1 \bar F(g^{-1}(t)) \ dt \geq 0$ for all $g \in \G_0$. From Lemma \ref{newlem1} it follows that $\bar F = 0$.
That is, $z_0 i + \sum_{k=1}^n z_k f_k = 0$. This contradicts the
assumption $\mathbf{f}  \in  \mathcal{L}\mathcal{I}\mathcal{N}_n$.

If $0$ is not in $L_{\mathbf{f}}(\G_{00})^{\circ}$, then, as before,
there exists $z \in \R^n \setminus \{ 0 \}$ such that $\sum_{k=1}^n z_k Q(f_k, g) \geq 0$ for all $g \in \G_{00}$.
Since $g^* = g$, we have
 \begin{equation} \label{neweq8}
 Q(f_k,g) \ = \ - Q(f_k^*,g^*) \ =  - Q(f_k^*,g) \ = \ \int_{-1}^1 f_k^e(g^{-1}(t)) \ dt.
 \end{equation}
So with $F = \sum_{k=1}^n z_k f_k^e$,
$\int_{-1}^1 F(g^{-1}(t)) \ dt \geq 0$ for all $g \in \G_{00}$. Because $f_k^e(\pm 1) = 0$ and
$\int_{-1}^1 f_k^e(t) \ dt = \int_{-1}^1 f_k(t) \ dt = 0$, we have
$\int_{-1}^1  F(t) \ dt = 0$, $ F(\pm 1) = 0$ with $F$ even and
$\int_{-1}^1 F(g^{-1}(t)) \ dt \geq 0$ for all $g \in \G_{00}$. Again
Lemma \ref{newlem1} implies $\sum_{k=1}^n z_k f_k^e = F = 0$.
So if  $\mathbf{f}  \in  \mathcal{L}\mathcal{I}\mathcal{N}_n^+$, it
follows that  $0 \in L_{\mathbf{f}}(\G_{00})^{\circ}$.
\vspace{.25cm}

{\bfseries Step 2:} $L_{\mathbf{f}}(\G_0)^{\circ} \ = \ $
$L_{\mathbf{f}}(\{ g \in \G_0: (f_1, \dots, f_n, g) \in  \mathcal{L}\mathcal{I}\mathcal{N}_{n+1} \})^{\circ}$.
If $\mathbf{f}  \in  \mathcal{L}\mathcal{I}\mathcal{N}_n^+$, then this set equals
$L_{\mathbf{f}}(\{ g \in \G_0: (f_1, \dots, f_n, g) \in  \mathcal{L}\mathcal{I}\mathcal{N}_{n+1}^+ \})^{\circ}$.
\vspace{.25cm}

Observe first that if $C \subset \R^n$ is a convex set with $0 \in C^{\circ}$,
then $C^{\circ}$ is convex (see Proposition 1 of \cite{MT}) and
clearly,
\begin{equation} \label{neweq9}
C^{\circ} \ = \ \bigcup_{0 < x < 1} x \cdot C^{\circ}.
\end{equation}

We can apply Lemma \ref{newlem5} to get $g_0 \in \G_{00}$ such that
$L_{\mathbf{f}}(g_0) = 0$ and $(f_1, \dots,f_n, g_0) \in \mathcal{L}\mathcal{I}\mathcal{N}_{n+1}$.
For any $g \in \G_0$, if $(f_1, \dots,f_n, g) \not\in \mathcal{L}\mathcal{I}\mathcal{N}_{n+1}$, then
$g$ is a linear combination of $i, f_1, \dots, f_n$ and so for any $x \in [0,1)$
$(f_1, \dots,f_n, g_x) \in \mathcal{L}\mathcal{I}\mathcal{N}_{n+1}$ where $g_x = x g + (1 - x)g_0$.
Since $L_{\mathbf{f}}(g_x) = x \cdot L_{\mathbf{f}}(g)$, it follows that
$L_{\mathbf{f}}(\{ g \in \G_0: (f_1, \dots, f_n, g) \in  \mathcal{L}\mathcal{I}\mathcal{N}_{n+1} \})^{\circ}$
contains $\bigcup_{0 < x < 1} x \cdot L_{\mathbf{f}}(\G_0)^{\circ} = L_{\mathbf{f}}(\G_0)^{\circ}$.
The reverse inclusion is obvious.

If $\mathbf{f}  \in  \mathcal{L}\mathcal{I}\mathcal{N}_n^+$, then we can apply
Lemma \ref{newlem5} to get $\bar g_0 \in \G_{0}$ such that
$L_{\mathbf{f}}(\bar g_0) = 0$ and $(f_1, \dots,f_n, \bar g_0) \in \mathcal{L}\mathcal{I}\mathcal{N}_{n+1}^+$.
As before, we define $\bar g_x =  x g + (1 - x)\bar g_0$ and if
$(f_1, \dots,f_n, g) \not\in \mathcal{L}\mathcal{I}\mathcal{N}_{n+1}^+$, then
$(f_1, \dots,f_n, \bar g_x) \in \mathcal{L}\mathcal{I}\mathcal{N}_{n+1}$ for all $x \in [0,1)$.
The result follows as before.

\end{proof} \vspace{.5cm}


\begin{cor}\label{fttheo3} ({\bfseries Lifting Theorem}) Let
 $ \mathbf{f} = (f_1, \dots, f_n)  \in  \G_0^n $ and let $J \subset [n]$.

If $\mathbf{f}  \in  \mathcal{L}\mathcal{I}\mathcal{N}_n$, then  there exists
$g \in \G_{0}$ such that
\begin{equation}\label{inteq12}
 Q(g,f_j)  \ \begin{cases} < 0 \quad \text{for} \ j \in J,
 \\ > 0 \quad \text{for} \ j \in [n] \setminus J. \end{cases}
 \end{equation}

 In terms of the digraph $\Gamma_{\G}$,  $g \to f_j$ for $j \in J$ and
 $f_j \to g$ for $j \in [n] \setminus J$.

 Furthermore, we can choose $g$ so that $(f_1, \dots, f_n, g)  \in \mathcal{L}\mathcal{I}\mathcal{N}_{n+1}$.

 If
$\mathbf{f}  \in  \mathcal{L}\mathcal{I}\mathcal{N}_n^+$, then $g$ can be chosen in $\G_{00}$. Alternatively, it
can be chosen so that $(f_1, \dots, f_n, g)  \in \mathcal{L}\mathcal{I}\mathcal{N}_{n+1}^+$.

 \end{cor}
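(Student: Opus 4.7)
The plan is to deduce the corollary as a direct consequence of Theorem \ref{newtheo8} combined with the elementary observation that an open convex neighborhood of $0$ in $\R^n$ contains vectors with every possible sign pattern. I would argue as follows.

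First I would invoke Theorem \ref{newtheo8}: since $\mathbf{f}\in\mathcal{L}\mathcal{I}\mathcal{N}_n$, the set $L_{\mathbf{f}}(\G_0)^{\circ}$ is a convex subset of $\R^n$ containing $0$, and hence contains an open ball $B_r(0)$. In particular I can pick the specific vector $v\in\R^n$ with $v_j=-r/2$ for $j\in J$ and $v_j=+r/2$ for $j\in[n]\setminus J$; this $v$ lies in $L_{\mathbf{f}}(\G_0)^{\circ}$.

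Next, I would use the second assertion of Theorem \ref{newtheo8}, namely
\begin{equation*}
L_{\mathbf{f}}(\G_0)^{\circ} \;=\; L_{\mathbf{f}}\bigl(\{\,g\in\G_0:(f_1,\dots,f_n,g)\in\mathcal{L}\mathcal{I}\mathcal{N}_{n+1}\,\}\bigr)^{\circ}.
\end{equation*}
Since $v$ lies in this common interior, in particular it lies in the image $L_{\mathbf{f}}(\{\,g\in\G_0:(f_1,\dots,f_n,g)\in\mathcal{L}\mathcal{I}\mathcal{N}_{n+1}\,\})$, so there exists some $g\in\G_0$ with $(f_1,\dots,f_n,g)\in\mathcal{L}\mathcal{I}\mathcal{N}_{n+1}$ and $L_{\mathbf{f}}(g)=v$. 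Recalling that $L_{\mathbf{f}}(g)_j=Q(g,f_j)$, this $g$ satisfies $Q(g,f_j)<0$ for $j\in J$ and $Q(g,f_j)>0$ for $j\in[n]\setminus J$, giving the digraph statements $g\to f_j$ respectively $f_j\to g$.

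For the refinement, if $\mathbf{f}\in\mathcal{L}\mathcal{I}\mathcal{N}_n^+$ I would run exactly the same argument but use the second half of Theorem \ref{newtheo8}: $L_{\mathbf{f}}(\G_{00})^{\circ}$ is then a convex neighborhood of $0$, so one can solve $L_{\mathbf{f}}(g)=v$ with $g\in\G_{00}$; and the identity $L_{\mathbf{f}}(\G_0)^{\circ}=L_{\mathbf{f}}(\{g\in\G_0:(f_1,\dots,f_n,g)\in\mathcal{L}\mathcal{I}\mathcal{N}_{n+1}^+\})^{\circ}$ yields, by the same interior-picking argument, the alternative conclusion with $(f_1,\dots,f_n,g)\in\mathcal{L}\mathcal{I}\mathcal{N}_{n+1}^+$. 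There is no real obstacle to overcome here, since the convex-analytic heart of the matter has already been carried out in Theorem \ref{newtheo8}; the only point requiring any care is the trivial but essential observation that interiorness of $v$ places it in the underlying set, not merely in its closure, so that the equation $L_{\mathbf{f}}(g)=v$ is actually solvable by some $g$ in the prescribed subclass.
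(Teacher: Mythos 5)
Your proposal is correct and takes essentially the same approach as the paper's proof: both invoke Theorem \ref{newtheo8} to see that $0$ lies in the interior of the relevant image, hence vectors with the prescribed sign pattern also lie in that interior (the paper takes a small positive multiple of the vector $z$ with entries $\pm 1$, you take a vector at radius $r/2$ in an inscribed ball), and then use the identity between $L_{\mathbf f}(\G_0)^{\circ}$ and the interior of the image of the $\mathcal{L}\mathcal{I}\mathcal{N}_{n+1}$ (resp.\ $\mathcal{L}\mathcal{I}\mathcal{N}_{n+1}^+$ or $\G_{00}$) subclasses to extract the required $g$.
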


 \begin{proof} Define $z \in \R^n$ by
 $$z_j = \begin{cases} -1 \ \text{for} \ j \in J, \\ +1 \ \text{for} \ j \in [n] \setminus J. \end{cases}$$

 By Theorem \ref{newtheo8} some positive multiple of $z$ lies in
 $L_{\mathbf{f}}(\{ g \in \G_0: (f_1, \dots, f_n, g) \in  \mathcal{L}\mathcal{I}\mathcal{N}_{n+1} \})^{\circ}$.
 That is, there exists $ g \in \G_0$ with $ (f_1, \dots, f_n, g) \in  \mathcal{L}\mathcal{I}\mathcal{N}_{n+1}$
 and with $L_{\mathbf{f}}(g)$ a
 positive multiple of $z$.

 If $\mathbf{f} \in \mathcal{L}\mathcal{I}\mathcal{N}_{n}^+$, then by the theorem we can choose $g \in \G_{00}$ or
 choose so that  $ (f_1, \dots, f_n, g) \in \mathcal{L}\mathcal{I}\mathcal{N}_{n+1}$ and with either
 choice obtain $L_{\mathbf{f}}(g)$ as a
 positive multiple of $z$.

 Notice that the latter two possibilities are mutually exclusive since $g \in \G_{00}$ implies $g^e = 0$ and so implies
 $ \{ f_1^e, \dots, f_n^e, g^e \}$ is linearly dependent.

\end{proof}
\vspace{.5cm}

In contrast with Example \ref{ex} we have the following.

\begin{cor}\label{ftcor2a} (a) If $f \in \G_0 \setminus \{ i \}$, then there exist $g_1, g_2 \in \G_{0}$ such that
\  $Q(g_1,f) < 0 < Q(g_2,f)$ and so  $g_1 \to f \to g_2$ in $\Gamma_{\G}$.

If $f \in \G_0 \setminus \G_{00}$, then $g_1$ and $g_2$ can be chosen in $\G_{00}$.

(b) If $f_1, f_2 \in \G_0$ with $f_1 \to f_2$ in $\Gamma_{\G}$, then there exists $f_3 \in \G_0$ such that
$f_2 \to f_3 \to f_1$ in $\Gamma_{\G}$.  That is, every edge of the restriction  $\Gamma_{\G}|\G_0$ is contained in a
$3$-cycle in $\Gamma_{\G}|\G_0$ . \end{cor}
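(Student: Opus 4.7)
The plan is to derive both parts as direct applications of the Lifting Theorem (Corollary \ref{fttheo3}); the only real work will be to verify its linear-independence hypothesis in each case.

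For part (a), I would take $n = 1$ with $\mathbf{f} = (f)$. The hypothesis $(f) \in \mathcal{L}\mathcal{I}\mathcal{N}_1$ reduces to linear independence of $\{i, f\}$, which is immediate from $f \neq i$: any element of $\R \cdot i$ lying in $\G$ must equal $i$ (evaluate at $t = 1$). The Lifting Theorem applied with $J = \{1\}$ then produces $g_1 \in \G_0$ with $Q(g_1, f) < 0$, and with $J = \emptyset$ produces $g_2 \in \G_0$ with $Q(g_2, f) > 0$; by the definition of $\Gamma_{\G}$ these translate to $g_1 \to f$ and $f \to g_2$. For the refinement when $f \in \G_0 \setminus \G_{00}$, I would instead verify the stronger condition $(f) \in \mathcal{L}\mathcal{I}\mathcal{N}_1^+$: since $f \neq f^*$, the even part $f^e = \frac{1}{2}(f - f^*)$ is nonzero, and the correspondingly stronger conclusion of the Lifting Theorem yields $g_1, g_2 \in \G_{00}$.

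For part (b), the natural move is to apply the Lifting Theorem with $n = 2$, $\mathbf{f} = (f_1, f_2)$, and $J = \{1\}$; this would produce $f_3 \in \G_0$ with $Q(f_3, f_1) < 0$ and $Q(f_3, f_2) > 0$, which unwraps to $f_3 \to f_1$ and $f_2 \to f_3$, as required.

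The main obstacle is establishing $(f_1, f_2) \in \mathcal{L}\mathcal{I}\mathcal{N}_2$, i.e.\ that $\{i, f_1, f_2\}$ is linearly independent, and here the hypothesis $f_1 \to f_2$ is used essentially. From any non-trivial relation $\alpha i + \beta f_1 + \gamma f_2 = 0$, either $\gamma = 0$ (in which case $f_1 \in \R \cdot i$ forces $f_1 = i$, so $Q(f_2, f_1) = \int_{-1}^1 f_2 = 0$ because $f_2 \in \G_0$) or $\gamma \neq 0$ (in which case $f_2$ is an affine combination of $i$ and $f_1$, and bilinearity of $Q$ in (\ref{coneq6}) together with the vanishing identities $Q(i, f_1) = 0$ from (\ref{sumeq2}) and $Q(f_1, f_1) = 0$ from the antisymmetry in (\ref{coneq4}) gives $Q(f_2, f_1) = 0$). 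Either conclusion contradicts $f_1 \to f_2$, so $(f_1, f_2) \in \mathcal{L}\mathcal{I}\mathcal{N}_2$, and the Lifting Theorem then finishes the proof.
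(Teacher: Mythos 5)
Your proof is correct and follows essentially the same approach as the paper: reduce both parts to the Lifting Theorem (Corollary~\ref{fttheo3}) and verify the required linear-independence hypothesis, noting that $\{i,f\}$ is independent once $f \neq i$, that $\{f^e\}$ is independent once $f \notin \G_{00}$, and that $f_1 \to f_2$ rules out any linear dependence among $\{i, f_1, f_2\}$ because such a relation would force $Q(f_1,f_2) = 0$. Your treatment of part (b) is slightly more complete than the paper's (which only spells out the case $f_1 \in \operatorname{span}\{i, f_2\}$), but it is the same argument.
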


\begin{proof} (a): If $f \in \G_0 \setminus \{ i \}$, then $\{ i, f \}$ is linearly independent and so from
Corollary \ref{fttheo3} it follows that there exist $g_1, g_2 \in \G_0$ such that $g_1 \to f \to g_2$.

If $f \in \G_0 \setminus \G_{00}$, then $f^e \not= 0$ and so $\{ f^e \}$ is linearly independent.
By Corollary \ref{fttheo3} again
we can choose $g_1, g_2 \in \G_{00}$.

(b): If $f_1 = x_1 i + x_2 f_2$, then $Q(f_1, f_2) = x_1 Q(i, f_2) + x_2 Q(f_2, f_2) = 0$.  So
$f_1 \to f_2$ implies that $\{ i, f_1, f_2 \}$ is linearly independent and so the existence of the
required $f_3$ follows from
Corollary \ref{fttheo3}.

\end{proof}

{\bfseries Remark:} Observe that part (a) is a restatement of Theorem \ref{fttheo1}. \vspace{.5cm}

\begin{ex}\label{ftex3}For $n \geq 3$ there exist linearly dependent $n$-tuples
in $\G_0^n \cap \mathcal{T}\mathcal{O}\mathcal{U}\mathcal{R}_n$.
Nonetheless, the
general lifting result of Corollary \ref{fttheo3} requires that  $\{ i, f_1, \dots, f_n  \}$
be linearly independent. \end{ex}

\begin{proof} Assume $f_1 \to f_0$ with $f_0, f_1 \in \G_0$.

For $x \in [0,1]$ let $f_x = (1 - x)f_0 + x f_1$. Because
$-Q(f_1,f_0) = Q(f_0,f_1) > 0$ we have, for $a < b$,

$$Q(f_a,f_b) = b(1-a)Q(f_0,f_1) + a(1-b)Q(f_1,f_0) = (b - a)Q(f_0,f_1) > 0.$$

That is, $f_b \to f_a$ when $b > a$.
If $0 < a_1 < \dots < a_{n-2} < 1$, then
$(f_0, f_{a_1}, \dots, f_{a_{n-2}}, f_1) \in  \G_0^n \cap \mathcal{T}\mathcal{O}\mathcal{U}\mathcal{R}_n$.

Suppose that $\mathbf{f} = (f_1, \dots, f_n)\in \G_0^n \cap \mathcal{T}\mathcal{O}\mathcal{U}\mathcal{R}_n$.
Assume that for $J_1 \subset [n-1]$ and $J_2 = [n-1] \setminus J_1$, $f_n = \sum_{j \in J_1} a_j f_j - \sum_{j \in J_2} a_j f_j + a_0 i$
with $a_j \geq 0$ for $j = 1, \dots n$. Since $Q(f,i) = 0$ for all $f \in \G_0$, $f_n \not= i$ and so  $a_j > 0$ for at least one
 $j \in [n]$. Now if $g \in \G_0$ with $Q(f_j,g) > 0$ for all $j \in J_1$ and $Q(f_j,g) < 0$ for all $j \in J_2$, then
 $Q(f_n,g) =  \sum_{j \in J_1} a_j Q(f_j,g) - \sum_{j \in J_2} a_j Q(f_j,g) > 0$.  That is, $g \to f_j$ for all $j \in J_1$ and
 $f_j \to g$ for all $j \in J_2$ implies $g \to f_n$. So we cannot lift with $J = J_1 \subset [n]$  and $g \in \G_0$.

 \end{proof}
\vspace{.5cm}

Recall from Proposition \ref{elemgrpprop} (ii) that $\G$ is a dense, $G_{\d}$ subset of
$\bar \G$ which is closed in the
complete, separable metric space $\CC([-1,1])$. It follows that the relatively
closed subsets $\G_0 = \G \cap \bar \G_0$ and
$\G_{00} = \G \cap \bar \G_{00}$ are $G_{\d}$ subsets of $\CC([-1,1])$ as well.

\begin{cor}\label{ftcor2} (a) For every positive integer $n$ the set of $n$-tuples
 $  \mathcal{T}\mathcal{O}\mathcal{U}\mathcal{R}_n$
 is a relatively open,
  dense subset of  $\G^n$ and the intersection with $\G_0^n$ is dense in $\G_0^n$.

  (b)  For every positive integer $n$ the sets of $n$-tuples
   $\mathcal{L}\mathcal{I}\mathcal{N}_n^+ \subset \mathcal{L}\mathcal{I}\mathcal{N}_n$
are dense $G_{\d}$ subsets of $\G_0^n$.
\end{cor}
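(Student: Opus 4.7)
The plan splits naturally between (a) and (b), and in each case the openness/$G_\delta$ property is easy while density takes an inductive perturbation argument.

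For (a), the key observation is that $Q\colon\G\times\G\to\R$ is continuous: this is because $(f,g)\mapsto g^{-1}$ is continuous in the metric $d$ on $\G$, composition is jointly continuous on $\CC([-1,1])$, and integration on $[-1,1]$ is a continuous linear functional. It follows immediately that $\mathcal{T}\mathcal{O}\mathcal{U}\mathcal{R}_n = \bigcap_{i\neq j}\{Q(f_i,f_j)\neq 0\}$ is relatively open in $\G^n$. For density in $\G^n$, I would induct on $n$: with $(f_1',\dots,f_{k-1}')$ in $\mathcal{T}\mathcal{O}\mathcal{U}\mathcal{R}_{k-1}$ chosen close to $(f_1,\dots,f_{k-1})$, I need $f_k'\in\G$ close to $f_k$ avoiding the finite union $\bigcup_{j<k}\{h\in\G:Q(h,f_j')=0\}$. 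Each set in the union is relatively closed. To see it has empty interior, note that for fixed $f_j'$ there is some $g\in\G$ with $Q(g,f_j')\neq 0$: if $f_j'=i$ then take any $g\in\G_+\cup\G_-$, otherwise $\{i,f_j'\}$ is linearly independent so Corollary \ref{fttheo3} (the Lifting Theorem) supplies such a $g$ in $\G_0$. Since $\G$ is convex and $Q(\cdot,f_j')$ is affine, the convex combination $(1-\epsilon)h+\epsilon g$ then shows any open set around $h$ contains a point where $Q(\cdot,f_j')\neq 0$, so the zero set is nowhere dense. The finite union is therefore nowhere dense, and $f_k'$ can be chosen. For density of $\mathcal{T}\mathcal{O}\mathcal{U}\mathcal{R}_n\cap\G_0^n$ in $\G_0^n$, one first perturbs $f_1$ off $i$ (using that $\G_0\setminus\{i\}$ is dense in $\G_0$), and then at each subsequent step the nowhere-denseness inside $\G_0$ of $\{h\in\G_0:Q(h,f_j')=0\}$ follows from Corollary \ref{fttheo3} applied to the singleton $\{f_j'\}$ with $f_j'\neq i$.

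For (b), the $G_\delta$ part follows by pulling back Lemma \ref{genericlem0a}(b) along continuous maps. The map $\G_0^n\to\CC([-1,1])^{n+1}$ sending $\mathbf{f}\mapsto(i,f_1,\dots,f_n)$ is continuous, and $\mathcal{L}\mathcal{I}\mathcal{N}_n$ is the preimage of the $G_\delta$ set of linearly independent $(n+1)$-tuples, hence $G_\delta$. Likewise the map $\mathbf{f}\mapsto(f_1^e,\dots,f_n^e)$ is continuous (as $f\mapsto f^e=\tfrac12(f-f^*)$ is a continuous linear operation), and $\mathcal{L}\mathcal{I}\mathcal{N}_n^+$ is the preimage of the corresponding $G_\delta$ set.

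For density, since $\mathcal{L}\mathcal{I}\mathcal{N}_n^+\subset\mathcal{L}\mathcal{I}\mathcal{N}_n$ by Lemma \ref{newlem4}, it suffices to prove density of $\mathcal{L}\mathcal{I}\mathcal{N}_n^+$. I would induct on $n$. For $n=1$, $\mathcal{L}\mathcal{I}\mathcal{N}_1^+=\G_0\setminus\G_{00}$, which is dense in $\G_0$ by Corollary \ref{elemgroupcor3}. For the inductive step, given $\mathbf{f}=(f_1,\dots,f_n)\in\G_0^n$ and $\epsilon>0$, use the inductive hypothesis to find $(f_1',\dots,f_{n-1}')\in\mathcal{L}\mathcal{I}\mathcal{N}_{n-1}^+$ within $\epsilon/2$. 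By the second part of Lemma \ref{newlem5}, there exists $g\in\G_0$ with $(f_1',\dots,f_{n-1}',g)\in\mathcal{L}\mathcal{I}\mathcal{N}_n^+$, so in particular $g^e\notin\mathrm{span}\{f_1'^e,\dots,f_{n-1}'^e\}$. Set $f_n'=(1-t)f_n+tg$ for $t\in[0,1]$. Convexity of $\G_0$ gives $f_n'\in\G_0$, and since the even-part operation is linear, $f_n'^e=(1-t)f_n^e+tg^e$ lies in $\mathrm{span}\{f_1'^e,\dots,f_{n-1}'^e\}$ for at most one value of $t$ (the line $\{(1-t)f_n^e+tg^e\}$ cannot lie in a proper affine subspace not containing $g^e$). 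Choosing $t>0$ small and distinct from that exceptional value makes $(f_1',\dots,f_n')\in\mathcal{L}\mathcal{I}\mathcal{N}_n^+$ and close to $\mathbf{f}$.

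The only real obstacle is the convex-combination step in (b): one must be sure that adding a tiny multiple of a well-chosen $g$ provided by Lemma \ref{newlem5} actually escapes the forbidden finite-dimensional subspace without destroying the membership in $\G_0$. Convexity of $\G_0$ handles the latter, and the one-dimensional line argument handles the former. Everything else reduces to continuity of $Q$ together with the machinery (Lifting Theorem, Lemma \ref{newlem5}, Corollary \ref{elemgroupcor3}) already in hand.
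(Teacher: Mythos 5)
Your proof is correct, and the overall strategy (openness/$G_\delta$ by continuity, density by inductive perturbation) matches the paper, but the organization of each half differs in ways worth noting.

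For (a), the paper reduces immediately to the case $n=2$ by observing that $\mathcal{T}\mathcal{O}\mathcal{U}\mathcal{R}_n$ is a finite intersection, over pairs $(i,j)$, of preimages of a single open dense subset of $\G^2$ under the open projections $\G^n \to \G^2$; for the two-variable density it simply picks $h \geq f_1$, $h \neq f_1$, so that $h\circ f_2^{-1} \geq f_1\circ f_2^{-1}$ strictly, giving $Q(h,f_2)>0$. Your version does a coordinate-by-coordinate perturbation; it works, but note that your appeal to Corollary \ref{fttheo3} to produce $g$ with $Q(g,f_j')\neq 0$ is slightly off for the $\G^n$ case, since the Lifting Theorem is stated only for tuples in $\G_0^n$, and the $f_j'$ at that stage need only lie in $\G$. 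The fix is trivial and is exactly the paper's device: take any $g\in\G$ with $g\geq f_j'$, $g\neq f_j'$, which gives $Q(g,f_j')>0$ directly. Your affine-combination argument showing the zero set is nowhere dense is then fine, and for the $\G_0$ case your invocation of Corollary \ref{fttheo3} with the singleton $\{f_j'\}$ (after ensuring $f_j'\neq i$) is legitimate.

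For (b), the $G_\delta$ argument is identical to the paper's. For density, the paper runs an induction that swaps $f_{n-k+1}$ for a convex combination with a well-chosen member of the explicit family $\{q_0,q_1,\dots\}$ from Lemma \ref{newlem3}, using that these have linearly independent even parts. You instead induct on $n$ and invoke Lemma \ref{newlem5} to produce the perturbing $g\in\G_0$ with $g^e\notin\mathrm{span}\{(f_1')^e,\dots,(f_{n-1}')^e\}$; this is a heavier tool than needed (you discard the $L_{\mathbf{f}}(g)=0$ property), but it is perfectly valid, and your one-dimensional-line argument for avoiding the finite-dimensional subspace at small $t>0$ is correct. The paper's route via the $q_i$'s is arguably more elementary since Lemma \ref{newlem5} already depends on that family; yours is a clean packaging of the same underlying fact.
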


\begin{proof} (a) Since the intersection of finitely many open, dense subsets
is open and dense, it suffices to prove this for the case $n = 2$.

Because the group operations are continuous and  $Q(f_1,f_2) = $ \\ $\int_{-1}^1 f_1(f^{-1}_2(t)) \ dt$,
the condition $Q(f_1,f_2) \ \not= \ 0$ is an open condition on the pair
$(f_1, f_2) \in \G^2 $.

Density in $\G^2$ is easy.  If $Q(f_1,f_2) = 0$, choose $h \in \G$ with $h \geq f_1$,
but $h \not= f_1$. By replacing $h$ by
$xh + (1-x)f_1$ with small $x > 0$ we may choose $h$ arbitrarily close to $f_1$.
Since $h \circ f_2^{-1} \geq f_1 \circ f_2^{-1}$ and
is distinct from it, $Q(h,f_2) > 0$.

For a pair $(f_1,f_2) \in \G_0^2$ we may  first perturb to get $f_1 \in \G_0 \setminus \{ i \}$.
Now if $Q(f_1,f_2) = 0$,
then by Theorem \ref{fttheo1} there exists $g \in \G_{0}$ such that $Q(f_1,g) \not= 0$.
Let $f_2^x = x g + (1-x) f_2$ so that by (\ref{coneq6})
$Q(f_1,f_2^x) \ = \ x Q(f_1,g)$ and this is not equal to zero for
$x > 0$.

(b) The inclusion is Lemma \ref{newlem4}.

By continuity of the linear map $f \to f^e$, it follows from Lemma \ref{genericlem0a}(b)
 that the sets are $G_{\d}$.

From the inclusion it suffices to show that $\mathcal{L}\mathcal{I}\mathcal{N}_n^+$ is dense.

By induction on $k$ we
show we can perturb to get linear independence when $\{ f_1^e, \dots, f_{n-k}^e \}$
is linearly independent. No perturbation is
needed when $k = 0$. Now assuming the result for $k-1$, if $\{ f_1^e, \dots, f_{n-k}^e, f_{n-(k-1)}^e \}$
is linearly independent
then we can apply the induction hypothesis to perturb to linear independence.  If it is not, then
$f_{n-(k-1)}^e$ is a linear combination of the linearly independent set $\{ f_1^e, \dots, f_{n-k}^e \}$.
For a sequence $\{q_0, q_1, \dots \}$ as constructed in
Lemma \ref{newlem3}, at least one among $q_0^e, \dots, q_{n-(k-1)}^e $ is not
a linear combination of $\{ f_1^e, \dots, f_{n-k}^e \}$. Suppose $q_i^e$ is not. Replace $f_{n-(k-1)}$ by
$ f^x_{n-(k-1)} = (1-x)f_{n-(k-1)} + x q_i$. For any $x > 0$  $\{ f_1^e, \dots, f_{n-k}^e, (f^x_{n-(k-1)})^e \}$
is linearly independent.
Choose $x > 0$ small and then apply the induction hypothesis as before.

\end{proof}
\vspace{.5cm}

\begin{df}\label{genericdef2} Call $\mathbf{f} = (f_1, \dots, f_n)$ a \emph{generic}
$n$-tuple \index{generic $n$-tuple}when
$\mathbf{f} \in \mathcal{L}\mathcal{I}\mathcal{N}_n \cap \mathcal{T}\mathcal{O}\mathcal{U}\mathcal{R}_n$.
Call it \emph{strongly generic}\index{strongly generic $n$-tuple} if
$\mathbf{f} \in \mathcal{L}\mathcal{I}\mathcal{N}_n^+ \cap \mathcal{T}\mathcal{O}\mathcal{U}\mathcal{R}_n$.

We denote by $\G\mathcal{E}\mathcal{N}_n$ and $\G\mathcal{E}\mathcal{N}_n^+$ \index{$\G\mathcal{E}\mathcal{N}_n^+$ }
\index{$\G\mathcal{E}\mathcal{N}_n$ } the set of generic $n$-tuples \index{generic $n$-tuple} and the set of
strongly generic $n$-tuples.
\end{df}
\vspace{.5cm}

\begin{prop}\label{genericprop3} For every positive integer $n$ the sets of generic $n$-tuples
$\G\mathcal{E}\mathcal{N}_n^+ \subset  \G\mathcal{E}\mathcal{N}_n$
 are dense $G_{\d}$ subsets of $\G_0^n$.
\end{prop}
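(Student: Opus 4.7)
The plan is to deduce the proposition directly from Corollary \ref{ftcor2}, using the Baire category theorem on $\G_0^n$. Recall that by definition
\[
\G\mathcal{E}\mathcal{N}_n \ = \ \mathcal{L}\mathcal{I}\mathcal{N}_n \cap \mathcal{T}\mathcal{O}\mathcal{U}\mathcal{R}_n, \qquad \G\mathcal{E}\mathcal{N}_n^+ \ = \ \mathcal{L}\mathcal{I}\mathcal{N}_n^+ \cap \mathcal{T}\mathcal{O}\mathcal{U}\mathcal{R}_n,
\]
and the inclusion $\G\mathcal{E}\mathcal{N}_n^+ \subset \G\mathcal{E}\mathcal{N}_n$ is inherited from the inclusion $\mathcal{L}\mathcal{I}\mathcal{N}_n^+ \subset \mathcal{L}\mathcal{I}\mathcal{N}_n$ supplied by Lemma \ref{newlem4}.

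First I would verify that $\G_0^n$ is a Baire space. By Proposition \ref{elemgrpprop}(ii), $\G$ is a $G_\d$ subset of $\bar \G$, which in turn is closed in the separable Banach space $\CC([-1,1])$; hence $\G$ is a $G_\d$ in a complete metric space and is therefore Polish (Alexandrov). The subset $\G_0$ is the preimage of $\{0\}$ under the continuous linear functional $f \mapsto \int_{-1}^1 f(t)\,dt$, so it is relatively closed in $\G$, hence itself Polish. Therefore $\G_0^n$ is Polish and in particular Baire.

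Next, Corollary \ref{ftcor2}(a) shows that $\mathcal{T}\mathcal{O}\mathcal{U}\mathcal{R}_n$ is relatively open in $\G^n$, so $\mathcal{T}\mathcal{O}\mathcal{U}\mathcal{R}_n \cap \G_0^n$ is relatively open in $\G_0^n$; the same corollary shows that this set is dense in $\G_0^n$. Corollary \ref{ftcor2}(b) gives that $\mathcal{L}\mathcal{I}\mathcal{N}_n$ and $\mathcal{L}\mathcal{I}\mathcal{N}_n^+$ are each dense $G_\d$ subsets of $\G_0^n$. Writing $\mathcal{L}\mathcal{I}\mathcal{N}_n = \bigcap_k U_k$ with each $U_k$ relatively open and dense in $\G_0^n$, each set $U_k \cap (\mathcal{T}\mathcal{O}\mathcal{U}\mathcal{R}_n \cap \G_0^n)$ is relatively open and dense in $\G_0^n$, so
\[
\G\mathcal{E}\mathcal{N}_n \ = \ \bigcap_k \bigl(U_k \cap (\mathcal{T}\mathcal{O}\mathcal{U}\mathcal{R}_n \cap \G_0^n)\bigr)
\]
is a $G_\d$ subset of $\G_0^n$, and density follows from the Baire category theorem. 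Repeating the argument verbatim with $\mathcal{L}\mathcal{I}\mathcal{N}_n$ replaced by $\mathcal{L}\mathcal{I}\mathcal{N}_n^+$ yields the corresponding conclusion for $\G\mathcal{E}\mathcal{N}_n^+$.

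The substantive content has already been packaged into Corollary \ref{ftcor2}, so there is no real obstacle; the only point that requires a moment of thought is confirming that $\G_0^n$ is a Baire space, and this is a routine consequence of the topological-group description set up in Section 2.
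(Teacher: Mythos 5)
Your proposal is correct and follows the same route as the paper, which simply states that the result is immediate from Corollary \ref{ftcor2} and the Baire Category Theorem. You have supplied the routine details the paper leaves to the reader: that $\G_0^n$ is a Baire space (via the Polish-space argument through Proposition \ref{elemgrpprop}(ii)) and that intersecting a dense $G_\d$ with a dense open set yields a dense $G_\d$.
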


\begin{proof} This is immediate from  Corollary \ref{ftcor2} and the Baire Category Theorem.

\end{proof}
\vspace{.5cm}

\begin{theo}\label{generictheo4} Assume that $\mathbf{f} = (f_1, \dots, f_n)  \in \G\mathcal{E}\mathcal{N}_n $
with $R[\mathbf{f}]$
the associated tournament on $[n]$ and assume that $R$ is a tournament on $[n+1]$ whose
 restriction to $[n]$ is $R[\mathbf{f}]$.
For $f_{n+1} \in \G_0$ write $ \mathbf{f'}$ for the $n+1$-tuple $(f_1, \dots, f_n, f_{n+1})$.

(a) The set $\{ f_{n+1} \in \G_0 : \mathbf{f'}  \in \G\mathcal{E}\mathcal{N}_{n+1} \ \}$
 is  open and dense in $\G_0$. If $\mathbf{f} \in \G\mathcal{E}\mathcal{N}_n^+ $, then
 $\{ f_{n+1} \in \G_0 : \mathbf{f'}  \in \G\mathcal{E}\mathcal{N}_{n+1}^+ \ \}$  is  open and dense in $\G_0$.

 (b) The set $$\{ f_{n+1} \in \G_0 : \mathbf{f'}  \in \G\mathcal{E}\mathcal{N}_{n+1} \
 \text{, and } \ R[\mathbf{f'}] = R \ \}$$
 is  open  and nonempty in $\G_0$.

 (c)  If $\mathbf{f} \in \G\mathcal{E}\mathcal{N}_n^+ $, then the set
 $$\{ f_{n+1} \in \G_0 : \mathbf{f'}  \in \G\mathcal{E}\mathcal{N}_{n+1}^+ \
 \text{, and } \ R[\mathbf{f'}] = R \ \}$$ is open and nonempty
 in $\G_0$.

In particular,
every possible tournament extension of $R[\mathbf{f}]$ occurs as the associated
tournament of some  extension $\mathbf{f'}$ of $\mathbf{f}$
to a generic $n+1$-tuple.
\end{theo}

\begin{proof}  By Lemma \ref{genericlem0a}(a) the set of $f_{n+1}$ such that
$f_{n+1}$ lies in the space spanned by $\{i, f_1, \dots, f_n \}$
is closed in $\G_0$. Similarly, the set of $f_{n+1}$ such that $f_{n+1}^e$ lies
in the space spanned by $\{f_1^e, \dots, f_n^e \}$
is closed in $\G_0$. So the conditions on  $f_{n+1}$ that $\{i, f_1, \dots, f_n, f_{n+1} \}$ be linearly independent
(or that $\{f_1^e, \dots, f_n^e \}$ be linearly independent) is an open condition when
 $\mathbf{f}$ is generic (resp. when $\mathbf{f}$
is strongly generic).
The condition $Q(f_{n+1},f_i)  > 0$ or $< 0$ for any $i \in [n]$ is an open condition. It follows that
the $f_{n+1}$'s such that $\mathbf{f'}$ is generic or strongly generic form an
open set as do those with $R[\mathbf{f'}] = R$.

By Corollary \ref{fttheo3}, there exists $f_{n+1} \in \G_{0}$ such that
$f_{n+1} \to f_i$ for all $i \in R(n+1) \subset [n]$, i.e.
for those $i$ with $n+1 \to i$ in $R$,
and $f_i \to f_{n+1}$ for all $i$ in $R^{-1}(n+1)$,  the complementary subset in $[n]$ and
 such that $\mathbf{f'} \in \mathcal{L}\mathcal{I}\mathcal{N}_{n+1}$.
Since $\mathbf{f'}$ is clearly in $ \mathcal{T}\mathcal{O}\mathcal{U}\mathcal{R}_{n+1}$,
 it is in $\G\mathcal{E}\mathcal{N}_{n+1}$.

 If $\mathbf{f}$ is strongly generic then
$f_{n+1}$ can be chosen so that $\mathbf{f'}$ is in
$\mathcal{L}\mathcal{I}\mathcal{N}_{n+1}^+$ and so in $\G\mathcal{E}\mathcal{N}_{n+1}^+$.

%
%
%
%
%

For arbitrary $h \in \G_0$ the $n+1$-tuple
$(f_1, \dots, f_n,(1-x)h + xf_{n+1})$ is generic for $x > 0$ small enough (or strongly generic
if $\mathbf{f}$ is).
For example, if the original $n+1$-tuple with $x = 0$ has linearly
independent even parts, then it still does for small $x > 0$.
If it had linearly dependent even parts then they becomes linearly
 independent for $x \in (0,1]$. Similarly for the tournament inequalities.
 This completes the proof of
density in (a).

\end{proof}
\vspace{.5cm}

\begin{cor}\label{genericcor5} For every tournament $R$ on $[n]$ and
every $f_1 \in \G_0 \setminus \{ i \}$, there exists a  generic $n$-tuple $\mathbf{f}$
which begins with $f_1$ and which has associated tournament $R[\mathbf{f}]$ equal to $R$.
If $f_1 \in \G_0 \setminus \G_{00}$, we can choose $\mathbf{f}$
strongly generic.\end{cor}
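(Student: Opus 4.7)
The plan is a straightforward induction on $n$, iteratively applying the extension Theorem \ref{generictheo4} to build up the $n$-tuple one coordinate at a time, starting from the prescribed $f_1$.

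For the base case $n = 1$, I would verify directly that $(f_1)$ is generic. Since $f_1 \in \G_0$ and $f_1 \ne i$, the set $\{i, f_1\}$ is linearly independent: any relation $ai + bf_1 = 0$ evaluated at $t = 1$ forces $a + b = 0$, and at a point where $f_1(t) \ne t$ forces $a = b = 0$. Thus $(f_1) \in \mathcal{L}\mathcal{I}\mathcal{N}_1$. The tournament condition $\mathcal{T}\mathcal{O}\mathcal{U}\mathcal{R}_1$ is vacuous, and $R[(f_1)]$ is the empty digraph on $[1]$, which agrees with $R|[1]$. In the strongly generic case $f_1 \in \G_0 \setminus \G_{00}$, we have $f_1^e \ne 0$, so $\{f_1^e\}$ is linearly independent and $(f_1) \in \mathcal{L}\mathcal{I}\mathcal{N}_1^+$.

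For the inductive step, suppose I have already constructed a generic $k$-tuple $\mathbf{f}_k = (f_1,\dots,f_k) \in \G\mathcal{E}\mathcal{N}_k$ (respectively in $\G\mathcal{E}\mathcal{N}_k^+$) beginning with the given $f_1$ and satisfying $R[\mathbf{f}_k] = R|[k]$. The restriction $R|[k+1]$ is a tournament on $[k+1]$ whose restriction to $[k]$ is $R|[k] = R[\mathbf{f}_k]$, so Theorem \ref{generictheo4}(b) (respectively (c)) applies with this tournament in the role of ``$R$'' and furnishes some $f_{k+1} \in \G_0$ making $\mathbf{f}_{k+1} = (f_1,\dots,f_{k+1})$ a generic (respectively strongly generic) $(k+1)$-tuple with $R[\mathbf{f}_{k+1}] = R|[k+1]$. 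After $n-1$ iterations we obtain the desired $\mathbf{f} = \mathbf{f}_n$.

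There is no real obstacle here: the nontrivial content is already packed into Theorem \ref{generictheo4}, which guarantees both the preservation of (strong) genericity under extension and the realizability of any prescribed tournament extension. The only point requiring a brief check is the base case, since the theorem only produces extensions from a generic starting $n$-tuple; once we observe that a single non-identity element of $\G_0$ already constitutes a generic $1$-tuple (and a strongly generic one when it is not odd), the induction proceeds mechanically.
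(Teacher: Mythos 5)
Your proposal is correct and follows exactly the paper's approach: induction on $n$ starting from the $1$-tuple $(f_1)$, with Theorem \ref{generictheo4}(b) (respectively (c)) supplying each extension step. The paper states this in a single sentence; your write-up fills in the base-case verification that $\{i, f_1\}$ (respectively $\{f_1^e\}$) is linearly independent, which is the only detail the paper leaves implicit.
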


\begin{proof} Use induction on $n$, beginning with $f_1$ for $n = 1$ and then
applying Theorem \ref{generictheo4}.

\end{proof}
%
\vspace{.5cm}

In particular, this completes the proof of  Theorem \ref{maintheo3} and so of Theorem \ref{maintheo2}.

By using Theorem \ref{newtheo8} we can sharpen these results.

\begin{df}\label{newdef9} We will call a subset $C$ of a vector space $V$
 \emph{star-shaped about $0$} \index{star-shaped about $0$}
if for every nonzero vector $v \in V$  the set $\{ x \in \R : x \cdot v \in C \}$
 is an interval in $\R$ containing $0$ in its interior.
\end{df}
\vspace{.5cm}

\begin{theo}\label{newtheo10} The sets of matrices
$\{ M[\mathbf{f}] : \mathbf{f} \in \mathcal{L}\mathcal{I}\mathcal{N}_{n}^+  \}$
$\subset \{ M[\mathbf{f}] : \mathbf{f} \in \mathcal{L}\mathcal{I}\mathcal{N}_{n}  \}$
are star-shaped about $0$ in the vector space
of real anti-symmetric $n \times n$ matrices. \end{theo}

 \begin{proof} Observe that for $\mathbf{f} = (f_1, \dots, f_n)$ we can define
  $\mathbf{f}^x = (xf_1 + (1-x)i, \dots, xf_n + (1-x)i)$
 and get $M[\mathbf{f}^x]  = x^2 \cdot M[\mathbf{f}]$. It thus suffices to show
  that for any nonzero anti-symmetric matrix $M$
 there exists $y > 0$ and $\mathbf{f} \in \mathcal{L}\mathcal{I}\mathcal{N}_{n}$
 such that $y M = M[\mathbf{f}]$.  Then there
exist $\bar y > 0$ and $\overline{\mathbf{f}}$ such that  $\bar y (-M) = M[\overline{\mathbf{f}}]$.
 Thus, $\{ M[\mathbf{f}] \}$ contains
 the interval $[-\bar y,y]$.

 Again we use induction on $n$.  The result is vacuously true for $n = 1$ since $0$
 is the only anti-symmetric $1 \times 1$ matrix.

 Assume the result is true for $n$ and let $M$ be an arbitrary $(n+1) \times (n+1)$ anti-symmetric matrix.
 Let $M'$ be the $n \times n$ principal
 minor so that $M'_{kj} = M_{kj}$ for $k,j \in [n]$. Let $L \in \R^n$ with $L_k = M_{k (n+1)}$ for $k \in [n]$.

 By induction hypothesis, there exists $\mathbf{f} \in \mathcal{L}\mathcal{I}\mathcal{N}_{n}$ and $y_1 > 0$ such that
 $ M[\mathbf{f}] = y_1 M'$.

 By Theorem \ref{newtheo8} there exists $f_{n+1} \in \G_0$ such that
 $\mathbf{f'} = (f_1, \dots, f_n,f_{n+1}) \in \mathcal{L}\mathcal{I}\mathcal{N}_{n+1}$
 and $y_2 > 0$ such that $L_{\mathbf{f}}(f_{n+1}) = y_2 L$.

 Choose $x \in (0,1)$ so that $z_1 = x y_2 < 1$ and $z_2 = xy_1 < 1$.
 Define $\bar f_k = z_1 f_k + (1 - z_1)i$ for $k \in [n]$
 and $\bar f_{n+1} = z_2 f_{n+1} + (1 - z_2)i$ and
 $\overline{\mathbf{f'}} = (\bar f_1, \dots, \bar f_n, \bar f_{n+1})$
 \begin{align} \label{neweq10}
 \begin{split}
 Q(\bar f_j,\bar f_k) \ = \ &z_1^2 Q(f_j,f_k) \ = \ z_1^2 y_1 M_{kj}, \\
  Q(\bar f_{n+1},\bar f_k) \ = \ &z_1z_2 Q(f_{n+1},f_k) \ = \ z_1z_2 y_2 M_{k(n+1)}.
  \end{split}
  \end{align}
 With $z = z_1^2 y_1 = z_1z_2 y_2$, we have $M[ \overline{\mathbf{f'}}] = z M$ completing the inductive step.

  \end{proof} \vspace{.5cm}

  Now let $\mathbf{f} = (f_1, \dots, f_n) \in \G_0^n$.  Recall that with
  $q : [-1,1] \to [0,1]$ defined by $q(t) = \frac{t + 1}{2}$
  we define $F_k = q \circ f_k \circ q^{-1}$ and let $X_k = F_k^{-1}(U_k)$ with $U_1, \dots, U_n$
  independent uniform random variables on $[0,1]$.
  Thus, $ \mathbf{X} = (X_1, \dots, X_n)$ is an $n$-tuple of independent, proper,
  continuous random variables on $[0,1]$ and
   \begin{equation} \label{neweq11}
   P(X_k > X_j) \ = \ \int_0^1 F_j(F_k^{-1}(x)) \ dx \ = \ \frac{1}{2} + \frac{1}{4} Q(f_j,f_k)
   \end{equation}

  We define the matrix $M[\mathbf{X}]$ by
      \begin{equation}\label{neweq12}
      M[\mathbf{X}]_{kj} \ = \   P(X_k > X_j) - \frac{1}{2}.
      \end{equation}
      So that
      \begin{equation}\label{neweq13}
      M[\mathbf{X}]  \ = \ \frac{1}{4} M[\mathbf{f}].
      \end{equation}

      We thus immediately obtain from Theorem \ref{newtheo10}

      \begin{cor}\label{newcor11} The set $\{  M[\mathbf{X}] \}$, with $\mathbf{X}$ varying over all $n$-tuples of
      independent, proper, continuous random variables on $[0,1]$, is  star-shaped about $0$ in the vector space
of real anti-symmetric $n \times n$ matrices. \end{cor}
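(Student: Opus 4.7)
The plan is to read off Corollary \ref{newcor11} from Theorem \ref{newtheo10} via the scaling identity (\ref{neweq13}). First, I observe that the map $f \mapsto F = q \circ f \circ q^{-1}$ is a bijection between $\G_0$ and the class of distribution functions of proper continuous random variables on $[0,1]$; consequently, as $\mathbf{X}$ ranges over all $n$-tuples of independent, proper, continuous random variables on $[0,1]$, the associated tuple $\mathbf{f}$ ranges over all of $\G_0^n$. Identity (\ref{neweq13}) then gives the equality of sets
\begin{equation*}
\{ M[\mathbf{X}] \} \ = \ \tfrac{1}{4}\, \{ M[\mathbf{f}] : \mathbf{f} \in \G_0^n \}.
\end{equation*}
Since multiplication by the positive constant $\tfrac14$ preserves the property of being star-shaped about $0$, the task reduces to proving that $\mathcal{S}_n := \{ M[\mathbf{f}] : \mathbf{f} \in \G_0^n \}$ is star-shaped about $0$.

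Fix a nonzero anti-symmetric matrix $M$. Theorem \ref{newtheo10} already supplies $y > 0$ and $\mathbf{f} \in \mathcal{L}\mathcal{I}\mathcal{N}_n \subset \G_0^n$ with $M[\mathbf{f}] = yM$, hence $yM \in \mathcal{S}_n$. To obtain negative scalar multiples of $M$, I would pass to the componentwise reflection $\mathbf{f^*} = (f_1^*, \ldots, f_n^*)$, which lies in $\G_0^n$ by Proposition \ref{elemgrpprop}(iii); from the identity $Q(f_j^*, f_k^*) = Q(f_j, f_k) = -Q(f_k, f_j)$ of (\ref{coneq4}) I get $M[\mathbf{f^*}] = -M[\mathbf{f}] = -yM$, so $-yM \in \mathcal{S}_n$ as well. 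Next, for $s \in [0,1]$, the convex combinations $\mathbf{f}^s := (sf_1 + (1-s)i, \ldots, sf_n + (1-s)i)$ lie in $\G_0^n$ (by convexity of $\G_0$, Proposition \ref{elemgrpprop}(i), together with $i \in \G_{00} \subset \G_0$), and satisfy $M[\mathbf{f}^s] = s^2 M[\mathbf{f}]$, exactly as in the first paragraph of the proof of Theorem \ref{newtheo10}; applying this construction to $\mathbf{f}$ and to $\mathbf{f^*}$ yields $[0,y]M \cup [-y,0]M = [-y,y]M \subset \mathcal{S}_n$.

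It remains to check that $\{ r \in \R : rM \in \mathcal{S}_n \}$ is an interval, not merely a set containing $[-y,y]$. The same scaling $\mathbf{f} \mapsto \mathbf{f}^s$ shows that this set is closed under multiplication by $[0,1]$, so it coincides with the union of the closed intervals $[\min(0,r), \max(0,r)]$ over all $r$ in the set; such a union is automatically an interval. Combined with the inclusion of $(-y,y)$, this places $0$ in its interior and completes the verification. There is no genuine obstacle here: Theorem \ref{newtheo10} has already done all the real work, and the corollary is the formal observation that the two closure properties of $\G_0^n$ (under componentwise $A_z$ and under convex combination with $i$) suffice to promote star-shapedness from $\mathcal{L}\mathcal{I}\mathcal{N}_n$ up to all of $\G_0^n$, and thence via (\ref{neweq13}) to the set $\{M[\mathbf{X}]\}$.
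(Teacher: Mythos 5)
Your argument is essentially the same as the paper's, which states that the corollary follows immediately from Theorem \ref{newtheo10} via (\ref{neweq13}); you have simply spelled out the intermediate step — that $\{M[\mathbf{f}] : \mathbf{f}\in\G_0^n\}$ inherits star-shapedness from the smaller set over $\mathcal{L}\mathcal{I}\mathcal{N}_n$ by using closure of $\G_0^n$ under $\mathbf{f}\mapsto\mathbf{f}^s$ and $\mathbf{f}\mapsto\mathbf{f^*}$ — which the paper leaves to the reader. One small slip: from (\ref{coneq4}) the correct identity is $Q(f_j^*, f_k^*) = Q(f_k, f_j) = -Q(f_j, f_k)$, not $Q(f_j^*,f_k^*)=Q(f_j,f_k)$; but your conclusion $M[\mathbf{f^*}]=-M[\mathbf{f}]$ (compute $M[\mathbf{f^*}]_{ij}=Q(f_j^*,f_i^*)=Q(f_i,f_j)=-M[\mathbf{f}]_{ij}$) is still correct, so the proof stands. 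Note also that the paper's own proof of Theorem \ref{newtheo10} obtains negative multiples by invoking the existence result for $-M$ rather than by applying $A_z$; your reflection route gives the cleaner bonus $\bar y = y$.
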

\vspace{.5cm}

We conclude with some constructions.

For $\mathbf{f} = (f_1, \dots, f_n), \mathbf{g} = (g_1, \dots, g_n) \in \bar \G^n$, $h \in \bar \G$  and $\pi$ a permutation on $[n]$ we define
\begin{align}\label{inteq17}
\begin{split}
\mathbf{f^*}  \ = \ &(f^*_1, \dots, f^*_n), \\
\mathbf{g}\odot \mathbf{f}  \ = \ &(g_1 \odot f_1, \dots, g_n \odot f_n), \\
\r_h\mathbf{f} \ = \ &(\r_h(f_1), \dots, \r_h(f_n)), \\
\pi\mathbf{f} \ = \ &(f_{\pi^{-1}1}, \dots, f_{\pi^{-1}n}).
\end{split}
\end{align}
We call $\mathbf{f}\odot \mathbf{f}$ the \emph{double} of $\mathbf{f}$.

Notice that the operations $\cdot^*, \odot$ and $\pi$ leave $\G_0^n$ invariant while $\r_h$ need not.

For a tournament $R$ we define $\pi R = \{ (\pi i, \pi j) : (i,j) \in R \}$. \index{$\pi R$}

\begin{prop}\label{genericprop6} Let $\mathbf{f} = (f_1, \dots, f_n) \in  \G_0^n$.

(a) The $[n]$-tuple $\mathbf{f^*} \in  \G_0^n$ with $R[\mathbf{f^*}]$
the reversed digraph $R[\mathbf{f}]^{-1}$. Furthermore,
$\mathbf{f^*}$ is generic (or strongly generic) if $\mathbf{f}$ is generic (resp. strongly generic).

(b) The double $\mathbf{f}\odot \mathbf{f} \in  \G_0^n$ with $R[\mathbf{f}\odot \mathbf{f}]  = R[\mathbf{f}]$,
and $\mathbf{f}\odot \mathbf{f}$ is generic (or strongly generic) if $\mathbf{f}$ is generic (resp. strongly generic).

(c) If   $\mathbf{g} \in (\G_{00})^n$,
then $\mathbf{g}\odot \mathbf{f} \in  \G_0^n$  with $R[\mathbf{g}\odot \mathbf{f} ] = R[\mathbf{f}]$,
and $\mathbf{g}\odot \mathbf{f}$ is generic (or strongly generic) if $\mathbf{f}$ is generic (resp. strongly generic).

(d) If $h \in \G$, then $\r_h(\mathbf{f}) \in  \G^n$  with $R[\r_h(\mathbf{f}) ] = R[\mathbf{f}]$.

(e) If  $\pi$ is a permutation on $[n]$, then  $\pi \mathbf{f} \in \G_0^n$
 with $R[\pi \mathbf{f}] = \pi R[\mathbf{f}]$ and $\pi \mathbf{f}$ is generic (or strongly generic) if $\mathbf{f}$ is generic (resp. strongly generic).
\end{prop}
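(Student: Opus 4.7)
The plan is to verify (a)--(e) by checking three things for each: (i) membership of the resulting tuple in the stated subset of $\G^n$, (ii) the relation between the associated tournament and $R[\mathbf{f}]$, and (iii) the preservation of $\mathcal{L}\mathcal{I}\mathcal{N}_n$ and $\mathcal{L}\mathcal{I}\mathcal{N}_n^+$ where claimed. The membership and tournament assertions will largely follow from the integral identities $(\ref{inteq1})$, $(\ref{inteq2})$, $(\ref{coneq4a})$ and the multiplicativity formulas $(\ref{coneq4})$, $(\ref{coneq5})$ for $Q$, together with the observation (immediately after $(\ref{coneq4})$) that $Q$ vanishes on $\G_{00}\times\G_{00}$.

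Parts (a), (d), (e) are essentially formal. For (a), $A_z$ is a linear isometry of $\CC([-1,1])$ fixing $i$, and $(f^*)^e = -f^e$, so linear independence transfers bijectively to $\mathbf{f}^*$; the reversal $R[\mathbf{f}^*] = R[\mathbf{f}]^{-1}$ follows from $Q(f^*_j, f^*_i) = -Q(f_j, f_i)$. For (d), $(\ref{coneq4a})$ immediately gives $Q(f_j \circ h, f_i \circ h) = Q(f_j, f_i)$, and no $\mathcal{L}\mathcal{I}\mathcal{N}$ claim is made. For (e), permutation is a relabeling of coordinates and preserves everything in sight.

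The substance lies in (b) and (c), where the $\odot$ operation enters. Membership in $\G_0^n$ follows from $(\ref{inteq2})$ (in (c) using $\int g_k = 0$ since $\G_{00}\subset\G_0$); tournament preservation comes from $(\ref{coneq5})$: in (b), $Q(f_i\odot f_i, f_j\odot f_j) = \tfrac{1}{2} Q(f_i,f_j)$, and in (c) the term $Q(g_i,g_j)$ vanishes, leaving $Q(g_i\odot f_i, g_j\odot f_j) = \tfrac{1}{4} Q(f_i,f_j)$. Both are positive rescalings, so the tournament and $\mathcal{T}\mathcal{O}\mathcal{U}\mathcal{R}_n$ are preserved exactly.

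The main obstacle is preservation of linear independence under $\odot$, which is only affine (not linear) in each argument because of the additive constants in $(\ref{compeq})$. My approach is a case split on $[-1,0]$ and $[0,1]$. For $\mathcal{L}\mathcal{I}\mathcal{N}_n$ in (c), suppose $a_0 i + \sum_k a_k(g_k\odot f_k) = 0$. On $[-1,0]$, the substitution $s = 2t+1$ yields $a_0 s + \sum_k a_k g_k(s) = a_0 + \sum_k a_k$ for all $s\in[-1,1]$; evaluating at $s = \pm 1$ forces $a_0 + \sum a_k = 0$, so $a_0 i + \sum a_k g_k \equiv 0$. The analogous substitution $s = 2t-1$ on $[0,1]$ gives $a_0 i + \sum a_k f_k \equiv 0$, and $\mathbf{f} \in \mathcal{L}\mathcal{I}\mathcal{N}_n$ then kills all the $a_k$. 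Case (b) is the same computation specialized to $g_k = f_k$. For $\mathcal{L}\mathcal{I}\mathcal{N}_n^+$, I would exploit $(\ref{inteq15})$: on $[0,1]$, $(g_k\odot f_k)^e(t) = \tfrac{1}{4}[f_k(2t-1) - g_k^*(2t-1)]$, which for $g_k\in\G_{00}$ simplifies to $\tfrac{1}{4}[f_k - g_k](2t-1)$. A linear relation $\sum a_k (g_k\odot f_k)^e = 0$ thus forces $\sum a_k f_k = \sum a_k g_k$ on $[-1,1]$; taking even parts and using $g_k^e = 0$ collapses this to $\sum a_k f_k^e = 0$, killed by $\mathcal{L}\mathcal{I}\mathcal{N}_n^+$ for $\mathbf{f}$. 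Case (b) is easier, since $(f_k\odot f_k)^e(t) = \tfrac{1}{2} f_k^e(2t-1)$ on $[0,1]$ pulls the relation back to $\mathbf{f}$ directly.
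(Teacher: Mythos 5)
Your proof is correct and takes essentially the same approach as the paper: membership in $\G_0^n$ via (\ref{inteq2}), tournament preservation via (\ref{coneq4}), (\ref{coneq4a}), (\ref{coneq5}) (with $Q(g_i,g_j)=0$ for $g_i,g_j\in\G_{00}$), and transfer of $\mathcal{L}\mathcal{I}\mathcal{N}_n$ and $\mathcal{L}\mathcal{I}\mathcal{N}_n^+$ back to $\mathbf{f}$ using the piecewise-affine form of $\odot$ and the even-part formula (\ref{inteq15}). The only tactical difference is in pinning down the constant in the $\mathcal{L}\mathcal{I}\mathcal{N}_n$ step for (b)/(c): the paper integrates the $[0,1]$-piece over $[-1,1]$ and invokes $i, f_j\in\G_0$, while you evaluate the $[-1,0]$-piece at $s=\pm 1$ using $g_k(\pm 1)=\pm 1$; both are valid.
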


\begin{proof} (a) Since $(f^*_i)^e = - f^e_i$ it follows that  $\{ (f^*_1)^e, \dots, (f^*_n)^e \}$ is linearly independent
when $\{ (f_1)^e, \dots, (f_n)^e \}$ is.  It is clear that if $\{ i, f_1, \dots, f_n \}$ is linearly independent,
 then $\{ i, f_1^*, \dots, f_n^* \}$ is, since $i^* = i$.

Since $f_k^* \circ (f_j^*)^{-1} = (f_k \circ f_j^{-1} )^*$ the reversal of the signs of the integrals follows from (\ref{inteq1}).

 (b) and (c) If $z_0 i + \sum_{j=1}^n z_j g_j \odot f_j = 0$ then for
 $t \in [0,1],  z_0(2t - 1) + \sum_{j=1}^n z_j  f_j(2t - 1)$ is a constant (Note that $i \odot i = i$). Since
 $i, f_1, \dots, f_n \in \G_0$ the constant is zero. Hence,  $z_0 i + \sum_{j=1}^n z_j  f_j = 0$ on $[-1,1]$. That is, for arbitrary $\mathbf{g}$,
 $\mathbf{g}\odot \mathbf{f} \in \mathcal{L}\mathcal{I}\mathcal{N}_n $ if $\mathbf{f} \in \mathcal{L}\mathcal{I}\mathcal{N}_n$.

(b) By (\ref{inteq15})  $$(f_j\odot f_j)^e(t)  =  \frac{1}{4}[ f_j(2t  - 1) - f^*_j(2t - 1)] = \frac{1}{2}f_j^e(2t  - 1)$$
 for $t \in [0,1]$. Consequently,  $\mathbf{f}\odot \mathbf{f} \in \mathcal{L}\mathcal{I}\mathcal{N}_n $ if
 $\mathbf{f} \in \mathcal{L}\mathcal{I}\mathcal{N}_n$.

 It follows from Proposition \ref{elemgroupprop2}(i) and
 (\ref{inteq2})  that
\begin{equation}\label{inteq18}
 Q((f_k \odot f_k),(f_j \odot f_j)) \ = \ \frac{1}{2} Q(f_k,f_j).
 \end{equation}
So $R[\mathbf{f}\odot \mathbf{f} ] = R[\mathbf{f}]$ and  $\mathbf{f}\odot \mathbf{f}$ is strongly generic if $\mathbf{f}$ is.

(c) By (\ref{inteq15})  $(g_j\odot f_j)^e(t)  =  \frac{1}{4}[ f_j(2t  - 1) - g_j(2t - 1)]$ for $t \in [0,1]$.

If $\sum_j x_j f_j \in \G_{00}$ then $\sum_j x_j f^e_j = 0$. If $\mathbf{f} $ is strongly generic and $\mathbf{g} \in (\G_{00})^n$
it follows that  $\mathbf{g}\odot \mathbf{f} \in \mathcal{L}\mathcal{I}\mathcal{N}_n^+$.
Moreover, since $\G_{00}$ is a subgroup
contained in $\G_0$, it follows from Proposition \ref{elemgroupprop2}(i) and (\ref{inteq2})  that
\begin{equation}\label{inteq19}
 Q((g_k \odot f_k),(g_j \odot f_j)) \ = \ \frac{1}{4} Q(f_k,f_j).
\end{equation}

So $R[\mathbf{g}\odot \mathbf{f} ] = R[\mathbf{f}]$ and $\mathbf{g}\odot \mathbf{f}$ is strongly generic if $\mathbf{f}$ is.

(d) Apply (\ref{coneq4a}).

(e) It is clear that $\pi \mathbf{f}$ is generic (or strongly generic) if $\mathbf{f}$ is.
Observe that $(\pi \mathbf{f})_i \to (\pi \mathbf{f})_j$ means
 $f_{\pi^{-1}i} \to f_{\pi^{-1}j}$ and
so $(\pi^{-1}i, \pi^{-1}j) \in  R[\mathbf{f}]$. This is equivalent to $(i,j) \in \pi R[\mathbf{f}]$.

\end{proof}
\vspace{.5cm}

\begin{prop}\label{genericprop7} Let $\mathbf{f} = (f_1, \dots, f_n)$ be a strongly generic $n$-tuple, with $R[\mathbf{f}] = R$ the associated
tournament on $[n]$. Assume that $\mathbf{g} = (g_1, \dots, g_n) \in \G_{00}^n$ is such that for all $(i,j) \in R$, $g_i \to f_j$ and
$f_i \to g_j$. If for $(x_1, \dots, x_n) \in (0,1]^n$, we let $h_i = x_i f_i + (1 - x_i)g_i$, then
$\mathbf{h} = (h_1, \dots, h_n)$ is a strongly generic $n$-tuple, with $R[\mathbf{h}] = R$. \end{prop}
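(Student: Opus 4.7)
The plan is to verify three items for $\mathbf{h} = (h_1, \dots, h_n)$: that each $h_i \in \G_0$; that the associated tournament $R[\mathbf{h}]$ equals $R$ (which automatically gives $\mathbf{h} \in \mathcal{T}\mathcal{O}\mathcal{U}\mathcal{R}_n$); and that $\{h_1^e, \dots, h_n^e\}$ is linearly independent, so $\mathbf{h} \in \mathcal{L}\mathcal{I}\mathcal{N}_n^+$. The last two conditions together say precisely that $\mathbf{h}$ is strongly generic with associated tournament $R$. The first is immediate: each $h_i$ is a convex combination of $f_i \in \G_0$ and $g_i \in \G_{00} \subset \G_0$ with weight $x_i \in (0,1]$, and $\G_0$ is convex by Proposition \ref{elemgrpprop}(i).

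For the tournament identity, I would expand $Q(h_j,h_i)$ using the separate affineness of $Q$ in each variable (\ref{coneq6}):
\begin{equation*}
Q(h_j,h_i) = x_ix_j\, Q(f_j,f_i) + x_i(1-x_j)\, Q(g_j,f_i) + (1-x_i)x_j\, Q(f_j,g_i) + (1-x_i)(1-x_j)\, Q(g_j,g_i).
\end{equation*}
The crucial observation is that the last term vanishes: since $g_i^* = g_i$ and $g_j^* = g_j$, equation (\ref{coneq4}) combined with antisymmetry gives $Q(g_j,g_i) = -Q(g_j^*, g_i^*) = -Q(g_j,g_i)$, hence $Q(g_j,g_i) = 0$. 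For $(i,j) \in R$, the three hypotheses on $\mathbf{f}$ and $\mathbf{g}$ supply $Q(f_j,f_i) > 0$, $Q(f_j,g_i) > 0$ (from $g_i \to f_j$), and $Q(g_j,f_i) > 0$ (from $f_i \to g_j$), while all coefficients are nonnegative with $x_ix_j > 0$. Hence $Q(h_j,h_i) > 0$, and applying the same argument to each pair pins down $R[\mathbf{h}] = R$.

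For linear independence of the even parts, I would use that $g_i \in \G_{00}$ forces $g_i^e = 0$ by (\ref{inteq5}), so linearity of the even-part operation (\ref{inteq6}) yields $h_i^e = x_i f_i^e$ for each $i$. Since each $x_i > 0$ and $\{f_1^e, \dots, f_n^e\}$ is linearly independent by strong genericity of $\mathbf{f}$, the scaled list $\{h_1^e, \dots, h_n^e\}$ inherits linear independence, which gives $\mathbf{h} \in \mathcal{L}\mathcal{I}\mathcal{N}_n^+$.

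The main obstacle — really the only non-bookkeeping step — is recognizing that the $(1-x_i)(1-x_j)$ term in the bilinear expansion must vanish identically. This is precisely where the hypothesis $\mathbf{g} \in \G_{00}^n$ (rather than merely $\G_0^n$) is essential: without the evenness of the $g_k$'s, the fourth term could carry either sign and might dominate the first three when the $x_i$ are close to zero, destroying the positivity needed to conclude $R[\mathbf{h}] = R$.
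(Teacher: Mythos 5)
Your proof is correct and follows essentially the same route as the paper's: expand $Q(h_j,h_i)$ by bilinearity, use $h_i^e = x_i f_i^e$ for $\mathcal{L}\mathcal{I}\mathcal{N}_n^+$, and observe the sign of $Q(h_j,h_i)$ matches that of $Q(f_j,f_i)$. The only difference is that you spell out explicitly why the $(1-x_i)(1-x_j)Q(g_j,g_i)$ term vanishes, whereas the paper simply drops it, having noted earlier that $Q \equiv 0$ on $\G_{00}\times\G_{00}$.
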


\begin{proof} Notice that $g_j \in \G_{00}$ has been chosen so that for all $i \in [n]$ with $i \not= j$,
$Q(f_i,g_j) > 0$ when $Q(f_i,f_j) > 0$ and $Q(f_i,g_j) < 0$ when $Q(f_i,f_j) < 0$. By Corollary  \ref{fttheo3} such $g_j$ always exist.

Because $h_j^e = x_j f_j^e$ it follows that $\mathbf{h} \in \mathcal{L}\mathcal{I}\mathcal{N}_n^+ $.
Furthermore, because $Q$ is affine in each variable $Q(h_i,h_j)$ is equal to
\begin{equation}\label{inteq20}
 x_i x_j Q(f_i,f_j) + x_i(1 - x_j)Q(f_i,g_j) + (1 - x_i)x_jQ(g_i,f_j).
\end{equation}
All of these terms have the same sign as $Q(f_i,f_j)$. So $\mathbf{h}$ is  strongly generic  with $R[\mathbf{h}] = R$.

\end{proof}

\vspace{1cm}

\section{Universal Tournaments}
\vspace{.5cm}

In this section, we will consider infinite as well as finite tournaments. We will write $(S,R)$ for a tournament $R$ on a set $S$ or just use $R$ when
$S$ is understood.
For a set $S$ we write $|S|$ for the cardinality of $S$.

Let $(S_1,R_1)$ and $(S_2,R_2)$ be tournaments. A \emph{tournament morphism} \index{tournament morphism} $\phi : R_1 \tto R_2$ is
a mapping $\phi : S_1 \tto S_2$ such that $(\phi \times \phi)^{-1}(R_2) \subset R_1$. That is, $\phi(i) \to \phi(j)$ in $R_2$ implies
$i \to j$ in $R_1$. Because $R_1$ and $R_2$ are tournaments, $i \to j$ in $R_1$ implies $\phi(i) \to \phi(j)$ in $R_2$ unless $\phi(i) = \phi(j)$.
An injective morphism is called an \emph{embedding} \index{tournament embedding}in which case $i \to j$ in $R_1$
if and only if $\phi(i) \to \phi(j)$ in $R_2$.
A bijective morphism is called an \emph{isomorphism}, \index{tournament isomorphism} in which case, the inverse map
$\phi^{-1} : S_2 \tto S_1$ defines the
inverse isomorphism $\phi^{-1} : R_2 \tto R_1$. An isomorphism from $R$ to itself is called an
\emph{automorphism} of $R$. \index{tournament automorphism}

Recall that if $(S,R)$ is a tournament and $S_1 \subset S$, then the
tournament on $S_1$  $R|S_1 = R \cap (S_1 \times S_1)$ is
called the \emph{restriction} of $R$ to $S_1$.
The inclusion map $inc : S_1 \tto S$ defines an embedding of $R|S_1$
into $R$.  On the other hand, if $\phi : R_1 \tto R_2$ is
an embedding and $S_3 = \phi(S_1) \subset S_2$, then $\phi : S_1 \to S_3$
defines an isomorphism of $R_1$ onto the restriction
$R_2|S_3$.

For tournaments $(S,R)$ and $(T,U)$ if $S_0 \subset S$ and $\phi : R|S_0$ $ \tto U$ is an embedding, we say that
$\phi$ \emph{extends} to $R$ if there exists an embedding $\psi : R \tto U$ such that $\psi = \phi$ on $S_0$.

\begin{df}\label{unidef00} If $(T,U)$ is a tournament and $T_0 \subset T$,
then we say that $T_0$ satisfies the \emph{simple extension property}
\index{simple extension property} \index{extension property!simple}in
$U$ if for every subset $J \subset T_0$ there exists $v_J \in T$ such that in $U$
\begin{equation}\label{unieq00}
v_J \to j \quad \text{for all} \ j \in J, \qquad \text{and} \qquad j \to v_J  \quad \text{for all} \ j \in T_0 \setminus J.
\end{equation}
We will describe this by saying $v_J$ \emph{chooses} $J \subset T_0$
for $U$. \index{$v_J$ chooses $J \subset T_0$ for $U$}
\end{df}
\vspace{.5cm}

Since a tournament contains no diagonal pairs, it follows that the
 set $\{ v_J : J \subset T_0 \}$ consists of $2^{|T_0|}$ vertices
disjoint from $T_0$. Hence, $|T| \geq |T_0 | + 2^{|T_0|}$.

\begin{lem}\label{unilem00a} (a) Assume $(S,R)$ is a tournament, and
$S_0 \subset S$ with $|S \setminus S_0| = 1$, i.e.
$S$ contains a single additional vertex. If $\phi : R|S_0 \tto U$ is a
tournament embedding and $\phi(S_0)$ satisfies the simple
extension property in $U$, then $\phi$ extends to $R$.

(b) If $(S_0,R_0)$ is a tournament, then there exists a tournament $R$ on a  set $S$ with $S_0 \subset S$ and
$R_0 = R|S_0$ such that $S_0$ satisfies the simple extension property in $R$. Furthermore, $|S| = |S_0| + 2^{|S_0|}$.
\end{lem}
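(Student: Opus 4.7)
For part (a), let $s$ denote the unique element of $S \setminus S_0$. The plan is to read off from $R$ which elements of $S_0$ the new vertex $s$ beats, transport that subset to $T$ via $\phi$, and invoke the simple extension property there. Explicitly, set $J_0 = R(s) \cap S_0$ and $J = \phi(J_0) \subset T_0$. By hypothesis there is a vertex $v_J \in T$ that chooses $J$ for $U$. Define $\psi : S \to T$ by $\psi|S_0 = \phi$ and $\psi(s) = v_J$. As noted immediately after Definition \ref{unidef00}, any chosen $v_J$ must lie outside $T_0$, so $\psi$ is injective. The only pairs I still have to verify are those involving $s$, and for these the equivalence $s \to j \iff j \in J_0 \iff \phi(j) \in J \iff v_J \to \phi(j)$ delivers exactly the embedding condition.

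For part (b), the plan is to adjoin, for every subset $J \subset S_0$, one new vertex $v_J$ and declare by fiat that $v_J$ chooses $J$. Let $\{ v_J : J \subset S_0 \}$ be a family of $2^{|S_0|}$ pairwise distinct symbols disjoint from $S_0$, and set $S = S_0 \cup \{ v_J : J \subset S_0 \}$, so $|S| = |S_0| + 2^{|S_0|}$. Define $R$ on $S$ by keeping $R_0$ on $S_0$ and, for each $J \subset S_0$, orienting the edges between $v_J$ and $S_0$ exactly as in \eqref{unieq00}. The edges between distinct new vertices $v_J$ and $v_K$ remain free, so I would fix any orientation on them --- for instance, choose a strict total order $\prec$ on the power set $2^{S_0}$ and declare $v_J \to v_K$ whenever $J \prec K$. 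This yields a tournament $R$ on $S$ with $R|S_0 = R_0$, and by construction every $J \subset S_0$ is chosen by $v_J$, so $S_0$ satisfies the simple extension property in $R$.

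The real content of the lemma is already packaged in the definition of the simple extension property: once every subset of the base can be realised by a single vertex, extending a partial embedding one vertex at a time is automatic, and a tournament witnessing the property can be built by brute force. For that reason I do not anticipate any genuine obstacle. Part (a) is a one-vertex extension in which the needed image is forced by $\phi$ together with the simple extension property; part (b) is a direct construction whose only nontrivial freedom --- the orientation of edges among the newly adjoined vertices $v_J$ --- does not affect the conclusion.
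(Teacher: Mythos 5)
Your proof is correct and takes essentially the same approach as the paper: in (a) you map the single new vertex $s$ to $v_J$ with $J=\phi(R(s))$, and in (b) you adjoin one new vertex per subset of $S_0$, fixing its edges to $S_0$ by the choosing condition and orienting the remaining edges arbitrarily (the paper likewise just says ``let $R$ be a tournament containing'' the required edges, leaving the edges among the new vertices unconstrained). The extra details you supply — the injectivity of $\psi$ and the chain of equivalences for edges through $s$ — are exactly the routine verifications the paper leaves implicit.
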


\begin{proof} (a) If $S = \{ v \} \cup S_0$, then we let $J = \phi(R(v)) \subset \phi(S_0)$
and we obtain the extension by mapping
$v$ to $v_J$.

(b) If $*$ is a point not in $S_0$ and $P(S_0)$ is the power set of $S_0$, the
we let $S = S_0 \cup (\{ * \} \times P(S_0))$.
Let $R$ be a tournament on $S$ which contains $R_0$ and such that
\begin{equation}\label{unieq01}
\{ ((*,J),j) : \ j \in J \} \ \cup \ \{(j,(*,J)) :   \ j \in S_0 \setminus J \} \ \subset \ R.
\end{equation}
Clearly, $v_J = (*,J)$ chooses $J \subset S_0$ for $R$ and so  $S_0$ satisfies the simple extension property.

\end{proof}

\begin{df}\label{unidef01} A tournament $(T,U)$  is called \emph{ universal} when it satisfies \index{extension property}
\index{universal tournament}\index{tournament!universal}the following.
\vspace{.25cm}

{\bfseries Extension Property} If $R$ is a  tournament on a countable set $S$, $S_0$ is a finite subset of $S$ and
$\phi : R|S_0 \tto U$ is an embedding, then $\phi$ extends to $R$.
\end{df}
\vspace{.5cm}

\begin{prop}\label{uniprop02} In order that a tournament $(T,U)$  be universal, it is necessary and sufficient that
every finite subset $T_0$ of $T$ satisfies the simple extension property in $U$. \end{prop}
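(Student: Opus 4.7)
The plan is to prove the two implications separately, using Lemma \ref{unilem00a}(a) to do the heavy lifting in the sufficiency direction.

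For necessity, I assume $(T,U)$ is universal and fix a finite set $T_0 \subset T$ and a subset $J \subset T_0$. To produce the required vertex $v_J$, I would manufacture a tournament $R$ on $S = T_0 \cup \{v\}$, where $v \notin T_0$ is a new vertex, by setting $R|T_0 = U|T_0$, together with $v \to j$ for $j \in J$ and $j \to v$ for $j \in T_0 \setminus J$. With $S_0 = T_0$ and $\phi : R|S_0 \to U$ the inclusion, the extension property applied to $R$ and $\phi$ yields an embedding $\psi : R \to U$ extending $\phi$, and then $v_J := \psi(v)$ is the required chooser of $J$ for $U$.

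For sufficiency, assume every finite subset of $T$ has the simple extension property in $U$. Let $R$ be a tournament on a countable set $S$, let $S_0 \subset S$ be finite, and let $\phi_0 : R|S_0 \to U$ be an embedding. Enumerate $S \setminus S_0$ as a (possibly terminating) sequence $s_1, s_2, \dots$, and put $S_k = S_0 \cup \{s_1, \dots, s_k\}$. I would inductively construct embeddings $\phi_k : R|S_k \to U$ extending $\phi_{k-1}$. At step $k$, the set $T_k := \phi_{k-1}(S_{k-1})$ is finite, hence satisfies the simple extension property by hypothesis, so Lemma \ref{unilem00a}(a) (applied with $S_{k-1} \cup \{s_k\}$ in the role of $S$) provides the extension $\phi_k$. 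The common extension $\phi := \bigcup_k \phi_k$ is then an embedding of $R$ into $U$ extending $\phi_0$, which is what the Extension Property demands.

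The only point that needs a moment of attention, and which is already baked into Lemma \ref{unilem00a}(a) and the remark following Definition \ref{unidef00}, is that the element $v_J$ obtained from the simple extension property necessarily lies outside $T_0$: any $v_J \in T_0$ would force a loop $v_J \to v_J$ in $U$, impossible since $U$ is a tournament. This guarantees $\phi_k(s_k) \notin \phi_{k-1}(S_{k-1})$ at every inductive step, so injectivity, and with it the embedding property, is preserved. I do not foresee a real obstacle: this is a one-sided (forward only) Fraïssé-style extension argument, simpler than the usual back-and-forth because no countability hypothesis on $T$ is invoked.
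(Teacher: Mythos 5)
Your proof is correct. The sufficiency direction is exactly the paper's argument: enumerate $S\setminus S_0$, build the tower $S_k$, and repeatedly invoke Lemma \ref{unilem00a}(a), with the observation that the chooser $v_J$ is automatically outside the finite set (no loops in a tournament) supplying injectivity of the union. For necessity you take a slightly leaner route: the paper invokes Lemma \ref{unilem00a}(b) to package all $2^{|T_0|}$ choosers into one finite tournament $R_1 \supset U|T_0$ and applies the Extension Property once, whereas you fix a single $J$, adjoin one new vertex $v$ with the prescribed edges to get a tournament on $T_0\cup\{v\}$, and apply the Extension Property to extract $v_J = \psi(v)$, repeating per $J$. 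Both are sound; yours avoids Lemma \ref{unilem00a}(b) entirely at the cost of one application of the Extension Property per subset $J$, which is harmless since there are only finitely many. The substance and mechanism are the same, so this counts as the same proof with a cosmetic simplification in the necessity step.
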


\begin{proof} If $U$ is universal and $T_0$ is a finite subset of $T$, then by Lemma \ref{unilem00a} (b) there exists
a finite set $T_1 \supset T_0$  and a tournament $R_1 \supset U|T_0 $ such that $T_0$ satisfies the simple extension property
in $R_1$.  Let $\psi : R_1 \to U$ be an extension of the inclusion of $U|S_0$ into $U$. If $J \subset T_0$ and $u_J \in T_1$ chooses
$J \subset T_0$ for $R_1$, then
 $v_J = \psi(u_J)$ chooses $J \subset T_0$ for $U$.

Now assume that every finite subset of $T$ satisfies the simple extension property in $U$.

Count the finite or countably infinite set of vertices $v_1, v_2, \dots $ of $S \setminus S_0$.
Let $S_k = S_0 \cup \{ v_1, \dots, v_k \}$. Inductively, with $\psi_0 = \phi$,  Lemma \ref{unilem00a}(a) implies that
we can define an embedding $\psi_k :  R|S_k \tto U$
which extends $\psi_{k-1}$ for $k \geq 1$. If $S$ is finite with
$|S \setminus S_0| = N$ then $\psi = \psi_N$ is the required
extension.  If $S$ is countably infinite then $\psi = \bigcup_k \psi_k$,
 with $\psi(v_i) = \psi_k(v_i) $ for all $k \geq i$ is the
required extension.

\end{proof} \vspace{.5cm}

\begin{prop}\label{uniprop03} Assume that $\{ (T_k,U_k) : k \in \N \}$ is an
increasing sequence of tournaments, i.e. $T_k \subset T_{k+1}$
and  $U_k = U_{k+1}|T_k$ for and $k \in \N$. If $T_k$ satisfies the simple extension property
in $U_{k+1}$ for all $k \in \N$ then $U = \bigcup_k U_k$ is a universal tournament on $T = \bigcup_k T_k$. \end{prop}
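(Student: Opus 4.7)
My plan is to reduce the claim to Proposition \ref{uniprop02} by showing that every finite subset $T_0$ of $T$ satisfies the simple extension property in $U$.

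First I would verify that $U$ is actually a tournament on $T$. Given distinct $i,j \in T$, since the sequence $\{T_k\}$ is increasing and covers $T$, there exists some $k$ with $i,j \in T_k$; because $U_k$ is a tournament on $T_k$, exactly one of $(i,j),(j,i)$ lies in $U_k \subset U$. The consistency condition $U_k = U_{k+1}|T_k$ guarantees no contradictions between the $U_k$'s, and $U$ inherits the $R \cap R^{-1} = \emptyset$ property since each $U_k$ has it. So $U$ is a tournament on $T$.

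Next, by Proposition \ref{uniprop02}, to conclude that $U$ is universal it suffices to show that every finite subset $T_0 \subset T$ satisfies the simple extension property in $U$. Since $T_0$ is finite and the $T_k$'s are increasing with union $T$, there exists $k \in \N$ with $T_0 \subset T_k$. For any subset $J \subset T_0$, I apply the hypothesis that $T_k$ satisfies the simple extension property in $U_{k+1}$: taking $J$ itself as a subset of $T_k$, this yields a vertex $v_J \in T_{k+1}$ such that $v_J \to j$ in $U_{k+1}$ for all $j \in J$ and $j \to v_J$ in $U_{k+1}$ for all $j \in T_k \setminus J$. Because $T_0 \setminus J \subset T_k \setminus J$ and $U_{k+1} \subset U$, this $v_J \in T_{k+1} \subset T$ chooses $J \subset T_0$ for $U$.

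This is essentially all that is needed; there is no substantive obstacle because the simple extension property transfers up the chain automatically once one finite subset is pinned down inside some $T_k$. The only thing to be careful about is not confusing the subset $J \subset T_0$ of interest with the subset $J' \subset T_k$ on which the hypothesis is applied, and to notice that choosing $J' = J$ works precisely because the simple extension property gives a one-sided constraint that is stronger than needed on $T_k \setminus T_0$ and irrelevant on $T \setminus T_{k+1}$. Applying Proposition \ref{uniprop02} then finishes the proof.
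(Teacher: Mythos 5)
Your proof is correct and takes essentially the same approach as the paper: find $k$ with $T_0 \subset T_k$, apply the simple extension hypothesis to $J$ viewed as a subset of $T_k$ in $U_{k+1}$, and observe that the resulting $v_J \in T_{k+1}$ chooses $J \subset T_0$ for $U$ because $T_0 \setminus J \subset T_k \setminus J$ and $U_{k+1} \subset U$, then invoke Proposition \ref{uniprop02}. You spell out the verification that $U$ is a tournament (which the paper dismisses as clear) and the careful bookkeeping of $J$'s two roles, but the substance is identical.
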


\begin{proof} It is clear that the union $U$ is a tournament on $T$. Also,
$|T_{k+1}| \geq |T_k| + 2^{|T_k|}$ and so $T$ is infinite.

If $S_0$ is a finite subset of $T$ and $J \subset S$, then there exists
$k \in \N$ such that $S_0 \subset T_k$ and so $J \subset T_k$.
If $v_J \in T_{k+1}$ chooses $J \subset T_k$ for $U_{k+1}$, then it chooses $J \subset S_0$ for $U$. Thus,
$S_0$ has the simple extension property in $U$. Hence, $U$ is universal by Proposition \ref{uniprop02}.

\end{proof} \vspace{.5cm}

\begin{theo}\label{unitheo04} Assume that $(T_1,U_1)$ and $(T_2,U_2)$ are
countable, universal tournaments. If $S$ is a finite subset of $T_1$ and
$\phi : U_1|S \tto U_2$ is an embedding, then $\phi$ extends to an
isomorphism  $\psi : U_1 \tto U_2$. \end{theo}

\begin{proof}  This is a standard back and forth argument. First note
that  a universal tournament contains copies of every finite tournament and
so must be infinite. Let $u_1,u_2, \dots $ be a counting of the vertices
of $T_1 \setminus S_0$ and $v_1, v_2, \dots $ be
a counting of the vertices of $T_2 \setminus \bar S_0$ with $S_0 = S$ and
 $\bar S_0 = \phi(S)$.  Let $\psi_0: U_1|S_0 \tto U_2|\bar S_0$ be the isomorphism
obtained by restricting $\phi$.
%

Inductively, we construct for $k \geq 1$  \begin{itemize}
\item $S_k \supset S_{k-1} \cup \{ u_k \},$
\item $ \bar S_k \supset \bar S_{k-1} \cup \{  v_k \}, $
\item  $\psi_k : U_1|S_k \tto U_2|\bar S_k$ an isomorphism which extends $\psi_{k-1}$.
\end{itemize}
 Define $S_{k+.5}$ to be $S_k$ together with the
first vertex of $T_1 \setminus S_0$ which is not in $S_k$ and extend
$\psi_k$ to define an embedding $\psi_{k + .5}$ on $S_{k + .5}$.
Let $\bar S_{k + .5} = \psi_{k + .5}(S_{k + .5})$ so that
 $\psi_{k + .5}^{-1} : U_2|\bar S_{k + .5} \tto U_1$ is an embedding.
Define $\bar S_{k+1} $ to be $\bar S_{k +.5}$ together with the first  vertex of $T_2 \setminus \bar S_0$
which is not in it. Extend to define the embedding
$\psi_{k+1}^{-1}: U_2|\bar S_{k+1} \tto U_1$ and let $S_{k+1} = \psi_{k+1}^{-1}(\bar S_{k+1})$.

The union $\psi = \bigcup_k \psi_k$ is the required isomorphism.

\end{proof} \vspace{.5cm}

\begin{cor}\label{unicor05} There exist  countable universal tournaments, unique up to isomorphism.  In fact, if
$(T_1,U_1)$ and $(T_2,U_2)$ are countable, universal tournaments with $i_1 \in T_1, i_2 \in T_2$ then there
exists an isomorphism $\psi : U_1 \tto U_2$ with $\psi(i_1) = i_2$.

Any countable tournament can be
embedded in any universal tournament. \end{cor}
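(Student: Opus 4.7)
The plan is to assemble the corollary from the three preceding results: Lemma \ref{unilem00a}(b), Proposition \ref{uniprop03}, and Theorem \ref{unitheo04}. All three statements to prove (existence, uniqueness-with-prescribed-basepoint, and embedding of countable tournaments) will follow without new ideas.

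For existence, I would build the universal tournament as a countable nested union. Start with $(T_0,U_0)$ taken to be any finite tournament (for the uniqueness refinement it is convenient to start with a single-vertex tournament on $\{i_1\}$). Given a finite $(T_k,U_k)$, invoke Lemma \ref{unilem00a}(b) to produce a finite tournament $(T_{k+1},U_{k+1})$ with $T_k\subset T_{k+1}$, $U_{k+1}|T_k=U_k$, and such that $T_k$ has the simple extension property in $U_{k+1}$ (the lemma also tells us $|T_{k+1}|=|T_k|+2^{|T_k|}$, keeping each $T_k$ finite). Setting $T=\bigcup_k T_k$ and $U=\bigcup_k U_k$ gives a countable tournament, and Proposition \ref{uniprop03} immediately asserts that $U$ is universal. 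This establishes the existence clause.

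For the uniqueness clause, let $(T_1,U_1)$ and $(T_2,U_2)$ be countable universal tournaments and fix $i_1\in T_1$, $i_2\in T_2$. Take $S=\{i_1\}$ and define $\phi:U_1|S\to U_2$ by $\phi(i_1)=i_2$. Since the restriction $U_1|S$ has no edges, $\phi$ is vacuously an embedding. Theorem \ref{unitheo04} then extends $\phi$ to an isomorphism $\psi:U_1\to U_2$, and by construction $\psi(i_1)=i_2$. Taking $i_1$, $i_2$ arbitrary delivers uniqueness up to isomorphism, and the sharper statement about prescribed vertices is the same argument.

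For the final assertion, let $(S,R)$ be any countable tournament and let $(T,U)$ be a universal tournament. Apply the Extension Property of Definition \ref{unidef01} with $S_0=\emptyset$ and $\phi$ the empty map (which is trivially an embedding of the empty tournament into $U$): the Extension Property gives an embedding $\psi:R\to U$.

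I do not anticipate a genuine obstacle, since the back-and-forth machinery is already packaged in Theorem \ref{unitheo04} and the inductive construction of $U$ is packaged in Proposition \ref{uniprop03}. The only small point to verify carefully is that the inductive construction yields a countable (rather than merely at-most-of-size-continuum) tournament; this is immediate because we start with a finite $T_0$ and each step adds only finitely many vertices, so each $T_k$ is finite and $T=\bigcup_k T_k$ is countable.
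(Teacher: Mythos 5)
Your proposal is correct and follows essentially the same route as the paper: Lemma \ref{unilem00a}(b) plus Proposition \ref{uniprop03} for existence, a singleton embedding $i_1 \mapsto i_2$ extended via Theorem \ref{unitheo04} for the pointed isomorphism, and the Extension Property for embedding an arbitrary countable tournament. The only cosmetic difference is in the last clause, where you take $S_0 = \emptyset$ rather than the singleton $\{i_1\}$ used in the paper; both work, since the empty set trivially satisfies the simple extension property in any nonempty $U$, so the Extension Property applies.
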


\begin{proof} Beginning with an arbitrary finite tournament  we can use Lemma \ref{unilem00a} (b) to
construct  inductively a sequence of finite tournaments to which Proposition \ref{uniprop03}
 applies, thus obtaining a countable universal tournament.

Since the restriction $U_1|\{ i_1 \}$ is empty, the
map $i_1 \mapsto i_2$ gives an embedding of $U_1|\{ i_1 \}$ into $U_2$. It extends to an isomorphism by
Theorem \ref{unitheo04}.

If $(S,R)$ is a countable tournament with $i_1 \in S$, then, as above, the map taking
$i_1$ to any point of $i_2 \in T_2$ is an
embedding of $R|\{ i_1 \}$ which extends to an embedding of $R$ into $U_2$.

\end{proof} \vspace{.5cm}

\begin{cor}\label{unicor06} Let $(T,U)$ be a universal tournament and $S$ be a finite subset of  $T$.

(a) If $i_1, i_2 \in T$, then there exists an automorphism $\psi$ of $U$ with $\psi(i_1) = i_2$.

(b) If $\phi$ is an automorphism of $U|S$, then there exists an automorphism $\psi$ of $U$ which
restricts to $\phi$ on $S$.
\end{cor}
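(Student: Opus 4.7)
The plan is to obtain both parts as immediate applications of Theorem \ref{unitheo04}, using the fact (implicit in the setup and guaranteed by Corollary \ref{unicor05}) that the universal tournament $(T,U)$ is countable, so that the back-and-forth argument applies with $U_1 = U_2 = U$.

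For part (a), I would take the trivial embedding $\phi_0 : U|\{i_1\} \to U$ defined by $\phi_0(i_1) = i_2$. Since $U|\{i_1\}$ is the empty tournament on a single vertex, $\phi_0$ is automatically an embedding (there are no directed edges to preserve). Applying Theorem \ref{unitheo04} with $S = \{i_1\}$, $(T_1,U_1) = (T_2,U_2) = (T,U)$, and $\phi = \phi_0$ yields an isomorphism $\psi : U \to U$ extending $\phi_0$; this isomorphism is exactly an automorphism of $U$ with $\psi(i_1) = i_2$.

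For part (b), I would regard the given automorphism $\phi$ of $U|S$ as an embedding $\phi : U|S \to U$ (composing with the inclusion $U|S \hookrightarrow U$). Since $S \subset T$ is finite, Theorem \ref{unitheo04} again applies with $(T_1,U_1) = (T_2,U_2) = (T,U)$ to extend $\phi$ to an isomorphism $\psi : U \to U$, which is an automorphism restricting to $\phi$ on $S$.

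There is no real obstacle: the only thing to verify is that Theorem \ref{unitheo04}'s hypothesis of countability is met. This is guaranteed by Corollary \ref{unicor05}, which establishes that any universal tournament under discussion is countable and unique up to isomorphism. In fact, part (a) can also be derived as the special case $S = \{i_1\}$ of the second statement in Corollary \ref{unicor05} (with $(T_1,U_1) = (T_2,U_2) = (T,U)$), so both parts of the corollary are essentially restatements of the homogeneity built into the back-and-forth construction.
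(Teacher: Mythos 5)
Your proposal matches the paper's proof: both parts are reductions to Theorem~\ref{unitheo04} applied with $U_1 = U_2 = U$ (the paper cites Corollary~\ref{unicor05} for part~(a), but that corollary is itself an instance of Theorem~\ref{unitheo04}, so the route is the same). One small imprecision: Corollary~\ref{unicor05} does not assert that every universal tournament is countable (uncountable ones exist); rather, countability of $(T,U)$ is an implicit standing hypothesis here, just as it is in the paper's own proof, which likewise invokes countability-dependent results without restating the assumption.
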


\begin{proof} (a) This follows from Corollary \ref{unicor05}.

(b) Since the composition of $\phi$ with the inclusion of $S$ is an embedding,
(b) follows from Theorem \ref{unitheo04}.

\end{proof} \vspace{.5cm}

Let $(T,U)$ be a tournament and $S$ be a nonempty, finite subset of $T$. For $J \subset S$, let
  \begin{align}\label{unieq02}
  \begin{split}
T_J = \{ i \in T : (i,j) \in U \ &\text{for all} \ j \in J  \\
\text{and} \ (j,i) \in U \ &\text{for all} \ j \in S \setminus J \}.
\end{split}
\end{align}
That is, $T_J$ is the set of $i \in T$ which choose $J \subset S$ for $U$.

Clearly, $\{ S \} \cup \{ T_J : J \subset S \}$ is a partition of $T$ into $1 + 2^{|S|}$ subsets.

\begin{prop}\label{uniprop07}  If $(T,U)$ is a universal tournament, $S$ is a finite
subset of  $T$ and $J $ is a subset of $S$,
then the restriction $(T_J,U|T_J)$ is a universal tournament. \end{prop}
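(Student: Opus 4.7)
The plan is to verify universality of $(T_J, U|T_J)$ by means of Proposition~\ref{uniprop02}: it suffices to show that every finite subset $T_0 \subset T_J$ has the simple extension property in $U|T_J$. So fix such a $T_0$ and an arbitrary $K \subset T_0$; the goal is to produce $v \in T_J$ that chooses $K$ from $T_0$ for $U|T_J$.

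The key observation is that the conditions defining ``$v$ chooses $K$ from $T_0$'' and ``$v \in T_J$'' can be bundled together as a single simple-extension requirement in the ambient universal tournament $U$. Since $T_J$ is one of the blocks of the partition $\{S\} \cup \{T_{J'} : J' \subset S\}$ of $T$, we have $S \cap T_J = \emptyset$, and in particular $S \cap T_0 = \emptyset$, so $S \cup T_0$ is a finite subset of $T$ of size $|S| + |T_0|$. Consider the subset $J \cup K \subset S \cup T_0$.

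Applying Proposition~\ref{uniprop02} to the universal tournament $(T,U)$ and this finite set, we obtain $v \in T$ choosing $J \cup K \subset S \cup T_0$ for $U$; that is, $v \to x$ for every $x \in J \cup K$ and $x \to v$ for every $x \in (S \cup T_0) \setminus (J \cup K) = (S \setminus J) \cup (T_0 \setminus K)$. Restricting this to $S$ we see that $v \to j$ for $j \in J$ and $j \to v$ for $j \in S \setminus J$, which by the definition \eqref{unieq02} forces $v \in T_J$. Restricting to $T_0$ we see that $v \to t$ for $t \in K$ and $t \to v$ for $t \in T_0 \setminus K$, so $v$ chooses $K$ from $T_0$ for $U$, hence also for the restriction $U|T_J$.

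Thus every finite subset of $T_J$ has the simple extension property in $U|T_J$, and Proposition~\ref{uniprop02} gives the universality of $(T_J, U|T_J)$. There is no serious obstacle here; the only point requiring care is the disjointness $S \cap T_0 = \emptyset$, which is immediate from the partition structure, and the observation that choosing $J \cup K$ in $S \cup T_0$ simultaneously encodes membership in $T_J$ and the choice of $K$ from $T_0$.
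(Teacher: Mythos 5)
Your proof is correct and uses exactly the same idea as the paper's: given a finite $T_0 \subset T_J$ and $K \subset T_0$, apply the simple extension property of $U$ to the enlarged finite set $S \cup T_0$ with the subset $J \cup K$, then observe that the resulting vertex both lands in $T_J$ (from the $S$/$J$ part) and chooses $K$ from $T_0$ (from the $T_0$/$K$ part). The only cosmetic difference is notation, together with your explicit remark that $S \cap T_0 = \emptyset$ (used implicitly in the paper) so that the two choice conditions do not conflict.
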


\begin{proof} Assume that $S_1$ is a finite subset of $T_J$ and $K \subset S_1$. Let
 $S_1^+ = S_1 \cup S$ and $K^+ = K \cup J$. Because $K^+$
satisfies the simple extension property in $U$, there
exists $v_{K^+} \in T$ such that $v_{K^+} $ chooses $K^+ \subset S^+$ for $U$.
It follows first that $v_{K^+} $ chooses $J \subset S$ for $U$ and so
$v_{K^+}\in T_J$. It then follows that   $v_{K^+} $ chooses $K \subset S_1$ for $U|T_J$. Thus, $S_1$ satisfies
the simple extension property in $U|T_J$. As $S_1$ was arbitrary, it follows from Proposition \ref{uniprop02}
that $U|T_J$ is universal.

\end{proof} \vspace{.5cm}

\begin{ex}\label{uniex08} (a) For $(T,U)$ a countable, universal tournament, there
 exists $T_0$ a proper infinite subset of $T$
and an embedding of $U|T_0$ into $U$ which cannot be extended to an embedding of $U$ into itself.

(b) There exists a tournament which is not universal but into which every countable tournament can be embedded.
\end{ex}

\begin{proof} Let $i \in T$, $J = S = \{ i \}$, $T_0 = T_J$ and $T_1 = S \cup T_J$.
 Since $U|T_0$ is universal by Proposition \ref{uniprop07},
Corollary \ref{unicor05} implies that there exists an isomorphism $\phi : U|T_0 \tto U$. Since $\phi$ is surjective,
it cannot be extended to an embedding even of $U|T_1$ into $U$.

Since $(T_0,U|T_0)$ is universal and $T_0 \subset T_1$, it follows
that every countable tournament can be embedded into $U|T_1$.
Let $i_1 \in T_{\emptyset}$ so that $i \to i_1$ in $U$. The inclusion of $i$ into $T_1$ cannot be extended to
an embedding of $ U|\{i,i_1 \}$ into $U|T_1$ since $j \to i$ for
every $j \not= i$ in $T_1$. So $U|T_1$ is not universal.

\end{proof} \vspace{.5cm}

We apply all this to the digraph $\Gamma_{\G}$ from the previous sections.

\begin{theo}\label{unitheo09}  Assume that $\mathbf{f} = (f_1, \dots, f_n)$ is a strongly generic
$n$-tuple in $\G_0^n$. The finite sequence $\{ f_1, \dots, f_n \}$ can be extended
to  an infinite sequence $\{ f_1, f_2, \dots \}$ in $\G_0$,
such that the sequence of even functions
$\{ f_1^e, f_2^e, \dots \}$ is linearly independent and the restriction of
$\Gamma_{\G}$ to the set $\{ f_1, f_2, \dots \}$ is a
universal tournament. \end{theo}

\begin{proof}  Let $U$ be a universal tournament on $\N$. The
tournament $R[\mathbf{f}]$ on $[n]$ can be embedded in $U$. By
permuting $\N$, we may assume that the embedding is given by the
inclusion of $[n]$.  That is, so that $R[\mathbf{f}] = U|[n]$.

Inductively apply Theorem \ref{generictheo4}(c) to construct for $k > n$ the sequence
$\{f_1, \dots, f_k \}$ such that $\mathbf{f_k} = (f_1, \dots, f_k) \in \G\mathcal{E}\mathcal{N}_k^+$
and such that $R[\mathbf{f_k}] =  U|[k]$.

Since  every finite subset is linear independent,  $\{ f_1^e, f_2^e, \dots \}$ is linearly independent.
Finally, $k \mapsto f_k$ induces an isomorphism from $U$ to the restriction $\Gamma_{\G}|\{ f_1, f_2, \dots \}$.

\end{proof} \vspace{.5cm}

For a function $ \mathbf{f} : \N \tto \G_0$, i.e. an element of $\G_0^{\N}$, we define the associated
digraph $U[\mathbf{f}]$ \index{$U[\mathbf{f}]$} on $\N$
\index{tournament associated to $\mathbf{f}$} \index{tournament!associated}
to be the pullback of $\Gamma_{\G}$.  That is,
\begin{equation}\label{unieq09}
i \to j \ \text{in} \ U[\mathbf{f}] \quad \Longleftrightarrow \quad f_i \to f_j \ \text{in} \ \Gamma_{\G}.
\end{equation}

\begin{theo}\label{unitheo10} The sets
  \begin{align}\label{unieq10}
  \begin{split}
  \mathcal{U}\mathcal{T}\mathcal{O}\mathcal{U}\mathcal{R} &=
 \{  \mathbf{f}  \in \G_0^{\N} : U[\mathbf{f}]  \ \text{ is a universal tournament}  \} \\
  \mathcal{U}\mathcal{G}\mathcal{E}\mathcal{N} =
   \{ &\mathbf{f} \in \mathcal{U}\mathcal{T}\mathcal{O}\mathcal{U}\mathcal{R} :
  \{ i, f_1, f_2, \dots \} \ \text{is linearly independent} \} \\
  \mathcal{U}\mathcal{G}\mathcal{E}\mathcal{N}^+ =
   \{ &\mathbf{f} \in \mathcal{U}\mathcal{T}\mathcal{O}\mathcal{U}\mathcal{R} :
  \{ f_1^e, f_2^e, \dots \} \ \text{is linearly independent} \}
  \end{split}
  \end{align}
  \index{$\mathcal{U}\mathcal{T}\mathcal{O}\mathcal{U}\mathcal{R}$}
   \index{$\mathcal{U}\mathcal{G}\mathcal{E}\mathcal{N}$}
  \index{$\mathcal{U}\mathcal{G}\mathcal{E}\mathcal{N}^+$}
are dense $G_{\d}$
subsets of  $\G_0^{\N}$.  \end{theo}

\begin{proof} The projection map from $ \G_0^{\N}$ to $\G_0^n$ is open map for every
$n \in \N$. It follows that the preimage of
a dense $G_{\d}$ set is a dense $G_{\d}$ set. Thus, the condition on $\mathbf{f}  \in \G_0^{\N}$ that
$(f_1,\dots,f_n) \in \mathcal{T}\mathcal{O}\mathcal{U}\mathcal{R}_n $ for every $n$, is a dense $G_{\d}$
condition by Corollary \ref{ftcor2} and the Baire Category Theorem.  Similarly,
by Proposition \ref{genericprop3} the conditions $(f_1,\dots,f_n) \in \mathcal{G}\mathcal{E}\mathcal{N}_n $
for every $n$ and  $(f_1,\dots,f_n) \in \mathcal{G}\mathcal{E}\mathcal{N}^+_n $
for every $n$,, are dense $G_{\d}$ conditions. The first  condition says that the digraph
$U[\mathbf{f}]$ is a tournament.
The latter  conditions say, in addition, that $\{i, f_1, f_2, \dots \}$
or $\{ f_1^e, f_2^e, \dots \}$ is linearly independent.

Given two disjoint finite subsets $J_1, J_2$ of $\N$, let
  \begin{align}\label{unieq11}
  \begin{split}
W(J_1,J_2) = \{ \mathbf{f}  \in \G_0^{\N} \ &: \text{there exists} \ k \in \N \\
\text{such that} \ (k,j_1), (j_2,k) \in \ &U[\mathbf{f}] \ \text{for all} \ j_1 \in J_1, j_2 \in J_2 \}.
   \end{split}
  \end{align}
  Because $f_k \to f_j$ is an open condition, it follows that $W(J_1,J_2)$ is an open subset.
  Intersecting over all disjoint pairs $J_1,J_2$
  we obtain the $G_{\d}$ set $W$.
  From Lemma \ref{uniprop02} it follows that
 $\mathbf{f} \in \mathcal{U}\mathcal{T}\mathcal{O}\mathcal{U}\mathcal{R}$ if and only if $\mathbf{f} \in W$ and
 $(f_1,\dots,f_n) \in \mathcal{T}\mathcal{O}\mathcal{U}\mathcal{R}_n$ for all $n$.  Furthermore,
 $\mathbf{f} \in \mathcal{U}\mathcal{G}\mathcal{E}\mathcal{N}^+$ if and only if $\mathbf{f} \in W$ and
 $(f_1,\dots,f_n) \in \mathcal{G}\mathcal{E}\mathcal{N}^+_n$ for all $n$.
 Similarly, for $\mathcal{U}\mathcal{G}\mathcal{E}\mathcal{N}$.
  So these are all $G_{\d}$ conditions.

 For density, it suffices to show that $\mathcal{U}\mathcal{G}\mathcal{E}\mathcal{N}^+$ is dense.
 To begin with we may perturb $\mathbf{f}$ to get the dense condition that
  $(f_1,\dots,f_n) \in \mathcal{G}\mathcal{E}\mathcal{N}^+_n$ for all $n$.
 Let $N$ be arbitrarily large. We can apply Theorem \ref{unitheo09} to obtain
 $\mathbf{f'} \in \mathcal{U}\mathcal{G}\mathcal{E}\mathcal{N}^+$
 with $f'_k = f_k$ for $k = 1, \dots, N$. By definition of the product
 topology, choosing $N$ large enough we obtain $\mathbf{f'}$ arbitrarily
 close to $\mathbf{f}$.

\end{proof} \vspace{.5cm}

\begin{theo}\label{unitheo11} If $(\N,U)$ is a universal tournament, then there
exists a sequence $\{ X_1, X_2, \dots \}$ of independent,
continuous random variables on $[0,1]$ with $E(X_i) = \frac{1}{2}$ for all $i$,
such that $P(X_i > X_j) > \frac{1}{2}$ if and only if
$i \to j$ in $U$. \end{theo}

\begin{proof} By Theorem \ref{unitheo09} and uniqueness of the universal tournament, we may choose
$\mathbf{f} \in \mathcal{U}\mathcal{T}\mathcal{O}\mathcal{U}\mathcal{R} $ such that $U[\mathbf{f}] = U$.
For each $i \in \N$, let
$  F_i \in \H_0 $ equal $ A_q^{-1}(f_i) = q \circ f_i \circ q^{-1}$ with
$q(t) = \frac{t + 1}{2}$. Let $\{ Z_1, Z_2, \dots \}$ be a sequence of
independent $Unif(0,1)$ random variables and let $X_i = F_i^{-1}(Z_i)$.
From (\ref{02}) and (\ref{inteq}) it follows that
 $P(X_i > X_j) > \frac{1}{2}$ if and only if $\int_{-1}^1 f_j(f_i^{-1}(t)) \ dt > 0$ and so if and only if
$i \to j$ in $U$.

\end{proof}

We conclude this section with a sketch of the measure version of the density result Theorem \ref{unitheo10}.

Let $\M$ \index{$\M$} denote the space of Borel measures on $[0,1]$. With the
weak$^*$ topology induced from the dual space of $\CC([0.1])$, $\M$
becomes a compact metrizable space. A continuous map $G : [0,1] \tto [0,1]$ induces
the continuous map $G_* : \M \tto \M$ by
$G_* \mu(A) = \mu(G^{-1}(A))$ for $A$ a measurable subset of $[0,1]$. We call
$\mu \in \M$ a \emph{proper measure} \index{proper measure} when
the \emph{mean value}\index{mean value} $\int_0^1 x \mu(dx) = \frac{1}{2}$. We let $\M_0$ \index{$\M_0$} denote the
subset of proper measures. For example, the Lebesgue measure
$\lambda$ \index{$\lambda$} on $[0,1]$ is proper.

We call $\mu$ a \emph{continuous measure} \index{continuous measure} when it is full and
nonatomic, i.e. every point has measure zero and
every nonempty open subset of $[0,1]$ has positive measure. We let $\M^c$\index{$\M^c$}
denote the set of continuous measures with
$\M^c_0 = \M^c \cap \M_0$ \index{$\M^c_0$}.

\begin{prop}\label{uniprop12} The set $\M^c$ is a dense $G_{\d}$ subset of $\M$ and $\M_0^c$ is
a dense $G_{\d}$ subset of $\M_0$.
\end{prop}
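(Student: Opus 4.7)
The plan is to express $\M^c$ as the intersection of two separately-characterized $G_{\d}$ conditions --- full support and atomlessness --- and to obtain density by convex combination with Lebesgue measure $\lambda$. The statement for $\M_0$ will then follow because $\M_0$ is weak-$*$ closed in $\M$ (the mean value $\mu \mapsto \int_0^1 x \ \mu(dx)$ is continuous, being integration against a bounded continuous test function) and $\lambda$ is itself proper.

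For the full-support part, the complement consists of those $\mu$ for which some nonempty open subset of $[0,1]$ is a $\mu$-null set; taking a countable basis of rational open intervals, this is the countable union of the sets $\{ \mu : \mu((p,q)) = 0 \}$ with $p,q \in \Q \cap [0,1]$. Each such set is weak-$*$ closed, by the lower-semi-continuity direction of the Portmanteau theorem: for an open $U$ the map $\mu \mapsto \mu(U)$ is lower semi-continuous, so $\{ \mu : \mu(U) \leq 0 \}$ is closed. For the atomless part, the complement is the union over $k \in \N$ of the sets $\{ \mu : \mu(\{ x \}) \geq 1/k \ \text{for some} \ x \in [0,1] \}$. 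I would prove that each such $\ep$-atom set is closed by a compactness argument: if $\mu_n \to \mu$ weak-$*$ with $\mu_n(\{ x_n \}) \geq \ep$, pass to a subsequence so that $x_n \to x \in [0,1]$; for each $\d > 0$ the closed interval $[x-\d, x+\d]$ eventually contains every $x_n$, giving $\mu_n([x-\d, x+\d]) \geq \ep$ eventually; the upper-semi-continuity direction $\limsup \mu_n(K) \leq \mu(K)$ for closed $K$ then yields $\mu([x-\d, x+\d]) \geq \ep$, and letting $\d \leadsto 0$ along the nested intervals gives $\mu(\{ x \}) \geq \ep$.

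For density, given $\mu \in \M$ set $\mu_t = (1-t)\mu + t\lambda$ for $t \in (0,1]$. Each $\mu_t$ inherits full support and atomlessness from $\lambda$, hence lies in $\M^c$, while $\mu_t \to \mu$ weak-$*$ as $t \leadsto 0$. Since $\lambda \in \M_0$, the same convex combination preserves the condition $\int_0^1 x \ \mu(dx) = \tfrac{1}{2}$, so when $\mu \in \M_0$ each $\mu_t$ lies in $\M_0^c$; combined with the $G_{\d}$ statements above restricted to the closed subset $\M_0$, this produces both density assertions. The main technical point --- the only one beyond routine bookkeeping --- is the closedness of the $\ep$-atom set, which will require both the compactness of $[0,1]$ and the closed-set direction of the Portmanteau theorem; the rest is manipulation of semi-continuity and elementary convex combinations.
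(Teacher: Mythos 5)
Your $G_\delta$ argument is correct and takes a somewhat different route than the paper. For atomlessness the paper uses Fubini to reduce to $(\mu\times\mu)(\Delta)=0$ and then exhibits that as a $G_\delta$ condition via test functions; you instead work directly with $\varepsilon$-atom sets and close them using compactness of $[0,1]$ together with the closed-set direction of Portmanteau. Both are valid; yours is arguably more concrete. The full-support part is essentially the same in both, via lower semi-continuity of $\mu\mapsto\mu(U)$ for open $U$.

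However, your density argument has a genuine gap. You claim that $\mu_t=(1-t)\mu+t\lambda$ ``inherits full support and atomlessness from $\lambda$.'' Full support, yes: $\mu_t(U)\geq t\lambda(U)>0$ for nonempty open $U$. Atomlessness, no: if $\mu(\{x\})=w>0$ then $\mu_t(\{x\})=(1-t)w+t\lambda(\{x\})=(1-t)w>0$ for every $t<1$, so $\mu_t$ retains every atom of $\mu$ (merely scaled by $1-t$). Convex combination with a nonatomic measure never kills atoms. This is precisely why the paper's density argument has two extra steps before adding in $\varepsilon\lambda$: it first pushes any atoms at $0$ or $1$ into the interior via the pushforward under $G(x)=\varepsilon+(1-2\varepsilon)x$, then replaces each interior atom at $a$ by a measure of the same total mass spread uniformly over $(a-\varepsilon,a+\varepsilon)$, both operations being small weak-$*$ perturbations that preserve the mean $\tfrac12$. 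Only after atoms have been removed is the mixture with $\lambda$ used, and then only to secure full support. You need some such atom-removal step; without it the density claim fails (consider $\mu=\delta_{1/2}$, for which every $\mu_t$ with $t<1$ still has an atom at $\tfrac12$).
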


\begin{proof} By Fubini's Theorem, the measure $\mu$ is nonatomic if and only if the diagonal
$\Delta \subset [0,1]\times [0,1]$ has $\mu \times \mu$ measure
zero.  This holds if and only if for every $\ep > 0$ there exists a non-negative function
$h$ on $[0,1] \times [0,1]$ which $= 1$ on
$\Delta$ but whose $ \mu \times \mu$ integral is less than $\ep$. Thus,
 $(\mu \times \mu)(\Delta) = 0$ is a $G_{\d}$ condition.

For a continuous non-negative function $h \in \CC([0,1])$, the condition
 $\int_0^1 h(x) \mu(dx) > 0$ is an open condition.
Intersecting with a suitable countable collection of functions $h$ we see that having full
support is a $G_{\d}$ condition as well.

If $\mu$ has an atom at $0$ or $1$, we replace $\mu$ by $G_*\mu$ with $G(x) = \ep + (1 - 2\ep)x$. If $\mu$
has mean $\frac{1}{2}$, then
$$\int_0^1 x G_*\mu(dx) = \int_0^1 G(x) \mu(dx) = \ep + (1 - 2\ep)\cdot \frac{1}{2} = \frac{1}{2}.$$
With small $\ep > 0$  the new measure is close to $\mu$ and has no atom at $0$ or $1$.

We may replace an atom at $a \in (0,1)$ by a distribution with the same weight, uniform
on $(a-\ep,a+\ep)$. With small $\ep > 0$ the new measure is
arbitrarily close to $\mu$ and has the same mean.

Finally, with small $\ep > 0$ the measure $\ep \lambda + (1 - \ep)\mu$ is full and has mean $\frac{1}{2}$ if $\mu$ does.

\end{proof}

For $\mu \in \M$ the distribution function $F_{\mu}$ on $[0,1]$ is defined by $F_{\mu}(x) = \mu([0,x))$ for $x \in [0,1]$.
It is clear that $\mu$ is a continuous measure if and only if
$F_{\mu} \in \H$. If $F \in \H$, then $\mu = (F^{-1})_* \lambda$ is the continuous measure with $F_{\mu} = F$ because
$\mu([0,x)) =\lambda((F^{-1})^{-1}[0,x)) = \lambda([0,F(x))) = F(x)$. The map $L : \H \to \M^c$ given by
$L(F) = (F^{-1})_* \lambda$ is a continuous bijection which maps $\H_0$ onto $\M^c_0$.

We can identify ${\mathbf \mu} \in (\M)^{\N}$ with the product measure $\mu_1 \times \mu_2 \times \dots$ on the
product space
$[0,1]^{\N}$. Given $\mathbf{\mu} \in (\M)^{\N}$ we define the associated digraph
$U[\mathbf{\mu}]$ \index{$U[\mathbf{\mu}]$}
on $\N$ by \index{tournament associated to $\mathbf{\mu}$} \index{tournament!associated}
  \begin{equation}\label{unieq12}
  (i,j)  \in U[\mathbf{\mu}] \quad \Longleftrightarrow \quad \mathbf{\mu}(\{ \mathbf{x} \in [0,1]^{\N} : x_i > x_j \}) \ > \ \frac{1}{2}.
  \end{equation}

  Define $\mathcal{U}\mathcal{T}\mathcal{O}\mathcal{U}\mathcal{R}\mathcal{M}$ \index{$\mathcal{U}\mathcal{T}\mathcal{O}\mathcal{U}\mathcal{R}\mathcal{M}$}
  to be the set of $\mathbf{\mu} \in (\M_0^c)^{\N}$
  such that  $U[\mathbf{\mu}]$ is a universal tournament.

  \begin{theo}\label{unitheo13} The set $\mathcal{U}\mathcal{T}\mathcal{O}\mathcal{U}\mathcal{R}\mathcal{M}$
  is a dense $G_{\d}$ subset of  $\M_0^{\N}$. \end{theo}

  \begin{proof} The condition $(i,j) $ or $(j,i) \in U[\mathbf{\mu}]$ is an open condition on $\mathbf{\mu}$ and so the condition that
  $U[\mathbf{\mu}]$ be a tournament is a $G_{\d}$ condition. Defining $W(J_1,J_2)$ as in (\ref{unieq11}) and proceeding as in the proof of
  Theorem \ref{unitheo10} we see that $\mathcal{U}\mathcal{T}\mathcal{O}\mathcal{U}\mathcal{R}\mathcal{M}$ is a $G_{\d}$ set.

  For $\mathbf{f} \in \G^{\N}$ we define $\mathbf{F} = A_q^{-1}(\mathbf{f}) \in \H^{\N}$ letting $F_i = A_q^{-1}(f_i)$ as in the proof of
  Theorem \ref{unitheo11}. It follows that $\bar L = L \circ A_q^{-1}$ is a continuous bijection from
  $\G^{\N}$ to $(\M^c)^{\N}$ which maps $\G_0^{\N}$ onto $(\M^c_0)^{\N}$, dense in $\M_0^{\N}$.

  The map $\bar L$ maps $\mathcal{U}\mathcal{T}\mathcal{O}\mathcal{U}\mathcal{R}$ onto $\mathcal{U}\mathcal{T}\mathcal{O}\mathcal{U}\mathcal{R}\mathcal{M}$.
  By Theorem \ref{unitheo10} $\mathcal{U}\mathcal{T}\mathcal{O}\mathcal{U}\mathcal{R}$ is dense in $(\G_0)^{\N}$.
  It follows that $\mathcal{U}\mathcal{T}\mathcal{O}\mathcal{U}\mathcal{R}\mathcal{M}$ is dense in $(\M^c_0)^{\N}$ and so  in $\M_0^{\N}$.

  \end{proof}

\vspace{1cm}

 \section{Partition Tournaments}
\vspace{.5cm}

An $n$ \emph{partition} \index{partition}$\A = \{ A_1, \dots A_n \}$ of $[Nn] = 1, \dots, Nn$ consists of $n$ disjoint subsets with union $[Nn]$. We call it a
\emph{regular $n$ partition} \index{regular $n$ partition} when the cardinality $|A_i| = N$ for $i = 1, \dots n$.
There are $(nN)!/(N!)^n$ regular $n$ partitions of $[Nn]$.

We define for a regular $n$ partition on $[Nn]$ the digraph
\begin{equation}\label{parteq01}
R[\A] \ = \ \{ (i,j) \in [n] \times [n]: |\{ (a,b) \in A_i \times A_j : a > b \}| > N^2/2 \}.
\end{equation}
\index{$R[\A]$}That is, $(i,j) \in R[\A]$ or $A_i \to A_j$ if it is more likely that a randomly chosen element of $A_i$ is greater than a randomly chosen element of $A_j$
than the reverse. \index{tournament associated to $\A$} \index{tournament!associated}

If $N$ is  odd, then $R[\A]$ is a tournament on $[n]$.  That is, for every pair $i, j \in [n]$ with $i \not= j$ either
$A_i \to A_j$ or $A_j \to A_i$ and not both. Note that for $i = j$, $|\{ (a,b) \in A_i \times A_i : a > b \}| = N(N-1)/2$.

We can think of the partition as the values on the faces on $n$ different $N$-sided dice, but now with values selected from $[Nn]$, and with
 the $Nn$ different faces all having different values. If $D_i$ is the random variable associated with the die having faces with values from $A_i$, then
$A_i \to A_j$ exactly when $D_i \to D_j$ in the previous sense.

If we repeat each label $n$ times then we obtain $n$ different $Nn$ sided dice with labels from $[Nn]$, i.e. $Nn$-sided dice in the sense of
Section \ref{sec1}. However, the dice are only proper when the sum of the members of each $A_i \in \A$ is $\frac{1}{2}N(Nn + 1)$ or, equivalently,
if the expected value of a random choice from $A_i$ is $\frac{1}{2}(Nn + 1)$.

For example, from (\ref{exeq1}) we see that
\begin{align}\label{exeq2}
 \begin{split}
A_1 \ &=  \ \{  3, 5, 7 \}, \\
A_2 \ &=  \ \{ 2, 4, 9 \}, \\
A_3 \ &=  \ \{  1, 6, 8 \}.
 \end{split}
 \end{align}
is a regular $3$-partition of $[9]$ with $A_1 \to A_2 \to A_3 \to A_1$.

For $N$ large enough we can obtain any tournament on $[n]$ by using a regular $n$ partition on $[Nn]$.

\begin{theo}\label{maintheopart} If $R$ is a tournament on $[n]$, then there is a positive integer $M$ such that for every integer $N \geq M$,
there exists a regular $n$ partition of $[Nn]$
$\A = \{ A_1, \dots, A_n \}$
such that for $i, j \in [n]$, $A_i \to A_j$ if and only if $i \to j$ in $R$.  That is, $R = R[\A]$.
\end{theo}
\vspace{.5cm}

\begin{proof} From Theorem \ref{maintheo2} we can choose $X_1, \dots, X_n$  independent, continuous random variables on $[0,1]$ so that
for some $\ep > 0$, and all $(i,j) \in R$, $P(X_i > X_j) > \frac{1}{2} + \ep$. Let $N$ be an integer greater than $1$.

Now for $i \in [n]$ and $\a \in [N]$ let $\{ X_i^{\a} \}$ be independent random variables with each $X_i^{\a}$ distributed like
$X_i$.  We think of $\{ X_i^{\a} : \a \in [N] \}$ as $N$ points, independently chosen in $[0,1]$ according to the distribution of
$X_i$. Since the random variables are continuous, the probability that $X_i^{\a} = X_j^{\b}$ equals zero unless $i = j$ and $\a = \b$.
Thus, with probability one $\{ X_i^{\a} \}$ consists of $nN$ distinct points in $[0,1]$.

Define the indicator function $k : [0,1] \times [0,1] \to \{ 0, 1 \}$ with $k(x,y) = 1$ if $x > y$ and $= 0$ otherwise.
Hence, $k(X_i,X_j)$ is a Bernoulli random variable
with expectation $P(X_i > X_j)$ which is greater than $ \frac{1}{2} + \ep$ if $(i,j) \in R$.

For each $i \not = j$
\begin{equation}\label{parteq02}
\sum_{\a, \b} \ k(X_i^{\a},X_j^{\b}) \ = \ |\{ (\a,\b) \in [N] \times [N] : X_i^{\a} > X_j^{\b} \}|.
\end{equation}

Consider the random variable
\begin{align}\label{parteq03}
\begin{split}
Z_{i,j}  \ = \ [\frac{1}{N^2}\sum_{\a, \b} \ &k(X_i^{\a},X_j^{\b})] - P(X_i > X_j) \ = \ \frac{1}{N^2}\sum_{\a, \b} \ Z_{i,j}^{\a,\b}, \\
\text{with} \quad Z_{i,j}^{\a,\b} \ &= \ k(X_i^{\a},X_j^{\b}) - E(k(X_i^{\a},X_j^{\b})).
\end{split}
\end{align}

Thus, the expectation $E(Z_{i,j}) = 0$. To compute the variance $ = E(Z_{i,j}^2)$, we recall
that the variance of a Bernoulli random variable and the covariance
of two Bernoulli random variables are each bounded by $\frac{1}{4}$, since $p(1-p)$ has its maximum at $p = \frac{1}{2}$.
\begin{equation}\label{parteq04}
E(Z_{i,j}^2) \ = \ \frac{1}{N^4}\sum_{\a_1, \a_2, \b_1, \b_2} \ E( Z_{i,j}^{\a_1,\b_1} \cdot Z_{i,j}^{\a_2,\b_2})
\end{equation}
There are $N^2$ terms with $\a_1 = \a_2$ and $\b_1 = \b_2$, $N^2(N-1)$ terms with $\a_1 = \a_2$ and $\b_1 \not= \b_2$ and
$N^2(N-1)$ terms with $\a_1 \not= \a_2$ and $\b_1 = \b_2$. Each of the terms is bounded by $\frac{1}{4}$. The remaining terms
are all zero by independence.  It follows that the variance of $Z_{i,j}$ is bounded by $\frac{1}{2N}$.

By Chebyshev's Inequality \index{Chebyshev's Inequality} (see, e.g. \cite{BH} Theorem 10.1.11)
\begin{equation}\label{parteq05}
P(|Z_{i,j}| > \ep )\  \leq  \ \frac{1}{2N \ep^2}.
\end{equation}
In $R$ there are
$n(n-1)/2$ pairs $(i,j)$. Hence, the probability that $P(|Z_{i,j}| > \ep )$ for some pair $(i,j) \in R$ is bounded by $\frac{n^2}{4N \ep^2}$.

If $N > \frac{n^2}{4\ep^2}$, then there is positive probability such that, for all $(i,j) \in R, \ $ $|Z_{i,j}| \leq \ep $. In that case
for every $(i,j) \in R$
\begin{equation}\label{parteq06}
\frac{1}{N^2}\sum_{\a, \b} \ k(X_i^{\a},X_j^{\b}) \ > \ \frac{1}{2}.
\end{equation}

Thus, when $N > \frac{n^2}{4\ep^2}$, there exist $X_i^{\a} \in [0,1]$ distinct and so that $|Z_{ij}| \leq \ep$ for every pair $(i,j) \in R$. Let
 $a_1 < \dots < a_{nN}$ list the values of the $X_i^{\a}$'s  in order.  Thus, $a_p < a_q$ if and only if $p < q$. Let
 \begin{equation}\label{parteq07}
 A_i \ = \ \{ p : a_p = X_i^{\a} \quad \text{for some} \ \a \in [N] \}.
 \end{equation}
  From (\ref{parteq02}) and (\ref{parteq06}) it follows that, for all $(i,j) \in R, \ $
 $A_i \to A_j$.

\end{proof}
\vspace{.5cm}

Because there are only finitely many tournaments on $[n]$, we can, as before, choose $M$ so that if $N$ is greater than $M$, then
every tournament on $[n]$ occurs as the tournament of a regular $n$ partition of $[nN]$.

In \cite{A} the label \emph{game} \index{game} is used for a \emph{regular tournament}\index{regular tournament}\index{tournament!regular}.
A digraph is regular when the number of outputs equals the number of inputs for every vertex. Thus, a tournament on $p$ vertices is regular when
$p$ is odd and each vertex has $\frac{p-1}{2}$ inputs and so $\frac{p-1}{2}$ outputs. Up to isomorphism there is a unique regular
tournament on five vertices.  Such a tournament models the extension of the Rock-Paper-Scissors game to
  Rock-Paper-Scissors-Lizard-Spock as was popularized on the television show \emph{The Big Bang Theory}.

\begin{ex}\label{exSac1} An explicit example of a regular $5$ partition on $[30]$ which mimics the regular tournament on five vertices. \end{ex}

\begin{proof}  My student, Julia Saccamano, constructed the following lovely example.
 \begin{align}\label{parteq08}
 \begin{split}
 A \ &= \ \{  1, 6, 10, 22, 24, 30 \}, \\
 B \ &= \ \{ 7, 12, 13, 15, 19, 27 \}, \\
 C \ &= \ \{  3, 4, 17, 18, 23, 28 \}, \\
 D \ &= \ \{  2, 9, 11, 16, 26, 29 \}, \\
 E \ &= \ \{  5, 8, 14, 20, 21, 25 \}.
 \end{split}
 \end{align}
  \begin{equation}\label{parteq09}
  A \to C, E; \ B \to A, D; \ C \to B, D; D \to A, E; E \to B, C.
  \end{equation}
  Furthermore, in each case the victorious probability, i.e. $P(A > C),$ $ P(A > E)$, etc is $\frac{19}{36}$.
In addition, the sum of the faces for each die is $93$ and so the expected value of a roll for each is $\frac{93}{6} = \frac{31}{2}$.

  \end{proof}
\vspace{1cm}

\section{Appendix: An Alternative Proof}
\vspace{.5cm}

In this section we present an alternative proof of Theorem \ref{maintheo3}.

\begin{df}\label{specialdf} We call a sequence $\{ p_0, p_1, \dots \}$ of nonzero,
 continuously differentiable
elements of $\CC([-1,1])$ a \emph{special sequence}\index{special sequence}
when it satisfies the following properties:

\begin{itemize}
\item (Even) All the $p_i$'s are even functions ($p_i(-t) = p_i(t)$ for all $t \in [-1,+1]$) and $p_0 = 1$.

\item (Orthogonal) For all $i \not= j, $
$$ \int_{-1}^{1} \ p_i(t) \cdot p_j(t) dt = 2 \int_{0}^{1} \ p_i(t) \cdot p_j(t) dt = 0.$$

\item (Bounded) $|| p_i || \leq \frac{1}{2}$ for all $i > 0$.

\item (Boundary Values) $p_i(\pm 1) = 0$  for all $i > 0$.
\end{itemize}
\end{df}

To construct an example, recall that that the Legendre polynomials \index{Legendre polynomials} $\{ \ell_n: n = 0, 1, \dots \}$
define an orthogonal sequence on $[-1,+1]$ with $\ell_0 = 1$
and $\ell_n(1) = 1$ for all $n$.  In addition, they consist of only
even power nonzero terms when $n$ is even and so define even functions for even $n$.
Thus, we obtain a special sequence by choosing $p_0 = 1$ and for $i > 0, \ p_i = C_i [\ell_{4i} - \ell_{4i - 2}]$
with the positive constant $C_i$ chosen small
enough to obtain the boundedness condition.

Given a special sequence $\{ p_0, p_1, \dots \}$,  we define the associated sequence $\{g_1, g_2, \dots \} $ by
 \begin{equation}\label{assoceq}
 g_i(t) \ = \ \int_0^t \ [1 + p_i(s)] ds \qquad \text{for} \ t \in [-1,1].
 \end{equation}
 Observe that $g_i$ is odd because $1 + p_i$ is even. Since $g_i' = 1 + p_i \geq \frac{1}{2}$,
 the function $g_i$ is increasing.
 Since $p_i$ is orthogonal to $1$, $g_i(1) = 1$. Thus, each $g_i \in \G_{00}$.  Since
 $ \frac{3}{2} \geq g_i' \geq \frac{1}{2}$ it follows that
$ 2 \geq (g_i^{-1})' \geq \frac{2}{3}$.

Define $\CC_0^1 $ to be the set of $ \xi \in \CC $ such that
\begin{itemize}
\item $\xi(\pm 1) = 0.$
\item $\int_{-1}^{1} \ \xi(t) \ dt \ = \ 0, $
\item $\xi$ is continuously differentiable.
\end{itemize}

For $\xi \in \CC_0^1$ and $|z|$ sufficiently small in $\R$, $H_i(\xi,z) = g_i^{-1} + z \xi$ has a  derivative with
$1/3 < H_i(\xi,z)'(t) < 3$ for $t \in [-1,1]$. In addition,
 $H_i(\xi,z)(\pm 1) = \pm 1$. Thus each such $H_i(\xi,z) \in \G_0$.

 Observe first that
 \begin{equation}\label{assoceqaa}
 \int_{-1}^{1} \ g_j(H_i(\xi, 0)(t)) \ dt \ = \ \int_{-1}^{1} \ g_j(g_i^{-1}(t)) \ dt \ = \ 0,
 \end{equation}
 because $g_j \circ g_i^{-1} \in \G_{00}$.

 Furthermore,
\begin{align}\label{epzeroeq}
\begin{split}
\frac{d}{d z}|_{z= 0} \ \int_{-1}^{1} \ g_j(H_i(\xi, z)(t)) \ dt \ &= \
\int_{-1}^{1} \ g_j'(g_i^{-1}(t))\cdot \xi(t) \ dt \\
= \ \int_{-1}^{1} \ g_j'(s) g_i'(s) \xi(g_i(s)) \ ds \ &= \ \int_{-1}^{1} \ [1 + p_j(s)] \eta(s) \ ds,
\end{split}
\end{align}
with
\begin{align}\label{etaeq}
\begin{split}
\eta(s) \ = \  g_i'(s) \xi(g_i(s))  &\quad \text{and so} \\
\xi(t) \ = \ \eta(g_i^{-1}(t)) \div g_i'(g_i^{-1}(t)) &\ = \ \eta(g_i^{-1}(t)) \cdot (g_i^{-1})'(t).
\end{split}
\end{align}

Notice that
\begin{equation}\label{etaeq2}
\int_{-1}^{1} \ \eta(s) \ ds \ = \ \int_{-1}^{1}  g_i'(s) \xi(g_i(s)) \ ds
\ = \  \int_{-1}^{1} \ \xi(t) \ dt.
\end{equation}
Hence,
\begin{equation}\label{etaeq3}
\int_{-1}^{1} \ \eta(s) \ ds \ = \ 0 \qquad \Longleftrightarrow \qquad  \int_{-1}^{1} \ \xi(t) \ dt \ = \ 0.
\end{equation}

We now prove the following which implies Theorem \ref{maintheo3}.

\begin{theo}\label{maintheo3a} Let $\{ p_0, p_1, \dots \}$ be a
special sequence in $\CC([-1,1])$ with associated sequence $\{ g_1, g_2, \dots \}$ in $\G_{00}$.
If $R$ is a tournament on $[n]$ and $\ep > 0$, then there exists an $n$-tuple  ${\mathbf f} = ( f_1,\dots, f_n ) \in \G_0^n $ such that
\begin{itemize}
\item For $i \in [n], \ ||f_i - g_i|| < \ep$.
\item For $i \in [n], t \in [-1,1], \ 1/3 < f_i'(t) < 3.$
\item For $i,j  \in [n],$ with $i \not= j$,
$$f_j \circ f_i^{-1} \in \G_+ \quad \Longleftrightarrow \quad (i,j) \in R.$$
\end{itemize}
\end{theo}

\begin{proof} By induction on $n$. With $n = 1$ the only tournament is empty and we can let $f_1 = g_1$.

For the inductive step, assume that $n > 1$ and define
 \begin{align}\label{etadefeq}
\begin{split}
\eta(s) \ = \ \sum_{j \in R(n)} \ p_j(s) \ - &\ \sum_{j \in R^{-1}(n)} \ p_j(s) \\
\xi(t) \ = \ \eta(g_n^{-1}(t)) \div g_n'(g_n^{-1}(t)) &\ = \ \eta(g_n^{-1}(t)) \cdot (g_n^{-1})'(t).
\end{split}
\end{align}
From (\ref{etaeq3}) it follows that that $\xi \in \CC_0^1$.

So for $j = 1, \dots, n-1$ the orthogonality assumptions imply
\begin{equation}\label{esteq}
\int_{-1}^{1} \ [1 + p_j(s)] \eta(s) \ ds  =
\begin{cases} \int_{-1}^{1}   p_j(s)^2  ds > 0 \ \text{for} \ j \in R(n), \\
- \int_{-1}^{1}   p_j(s)^2  ds < 0 \ \text{for} \ j \in R^{-1}(n). \end{cases}
\end{equation}

There exists $\ep_1$ with $\ep > \ep_1> 0$ so that with $|z| < \ep_1$,
$H_n(\xi,z) \in \G_0$ with derivative between $1/3$ and $3$ on $[-1,1]$.
From (\ref{assoceqaa}) and (\ref{epzeroeq}) it follows that we can choose $z$ with
$0 < z < \ep_1$ so that with $f_n^{-1} = H_n(\xi,z)$ we have

\begin{equation}\label{esteq2}
\int_{-1}^{1} \ g_j(f_n^{-1}(t)) \ dt \
\begin{cases} > 0 \ \text{for} \ j \in R(n), \\
< 0 \ \text{for} \ j \in R^{-1}(n).\end{cases}
\end{equation}

That is,
\begin{equation}\label{esteq3}
g_j(f_n^{-1}) \in \G_+ \ \text{for} \ j \in R(n), \ \text{and} \ g_j(f_n^{-1}) \in \G_- \ \text{for} \ j \in R^{-1}(n).
\end{equation}

Because $\G_{\pm}$ are open sets, there exists $\d$ with
$0 < \d < \ep$ such that $||f_j - g_j|| < \d$ for $j \in [n-1]$ implies
\begin{equation}\label{esteq4}
f_j(f_n^{-1}) \in \G_+ \ \text{for} \ j \in R(n), \ \text{and} \ f_j(f_n^{-1}) \in \G_- \ \text{for} \ j \in R^{-1}(n).
\end{equation}

We apply the induction hypothesis to the restricted tournament $\bar R = R|[n-1]$ and we
obtain $f_j$ with $||f_j - g_j|| < \d$ for $j \in [n-1]$ such that for $j, k \in [n-1], f_k \circ f_j^{-1} \in \G_+ $
if and only if $(j,k) \in \bar R$.

From (\ref{esteq4}) it follows that $\{f_1, \dots, f_{n-1}, f_n \}$ is the required list.

\end{proof}

\vspace{1cm}

\bibliographystyle{amsplain}

\printindex
\end{document}